\newcommand\be{\begin{equation}}
\newcommand\ee{\end{equation}}
\newcommand{\bburl}[1]{\textcolor{blue}{\url{#1}}}
\newtheorem{theorem}{Theorem}[section]
\newtheorem{lemma}[theorem]{Lemma}
\newtheorem{corollary}[theorem]{Corollary}
\newtheorem*{lemma*}{Lemma}
\newtheorem*{claim*}{Claim}
\newtheorem*{proposition*}{Proposition}
\newtheorem*{fact*}{Fact}
\newtheorem*{corollary*}{Corollary}
\newtheorem*{hint*}{Hint}
\theoremstyle{definition}
\newtheorem{definition}[theorem]{Definition}
\newtheorem*{theorem*}{Theorem}
\newtheorem*{definition*}{Definition}
\newtheorem*{remark*}{Remark}
\newtheorem*{notation*}{Notation}
\newtheorem*{example*}{Example}
\newtheorem*{examples*}{Examples}
\newtheorem*{question*}{Question}
\newtheorem*{problem*}{Problem}
\newtheorem*{solution*}{Solution}
\newtheorem*{intuition*}{Intuition}
\newtheorem*{idea*}{Idea}
\newcommand{\ignore}[1]{}
\newcommand\tsup{\textsuperscript}
\newcommand{\m}[1]{$#1$\,}
\newcommand{\lp}{\left(}
\newcommand{\rp}{\right)}
\newcommand{\pa}[1]{\lp#1\rp}
\newcommand{\lb}{\left[}
\newcommand{\rb}{\right]}
\newcommand{\ba}[1]{\lb#1\rb}
\newcommand{\lf}{\lfloor}
\newcommand{\rf}{\rfloor}
\newcommand{\floor}[1]{\lf {#1} \rf}
\providecommand{\abs}[1]{\left\vert#1\right\vert}
\providecommand{\abs}[1]{\left\vert#1\right\vert}
\newcommand\ii{\item}
\newcommand{\defeq}{\vcentcolon=}
\newcommand{\id}[1]{^{(#1)}}
\newcommand\half {\frac{1}{2}}
\renewcommand{\Pr}{\mathop{\bf Pr\/}}
\newcommand{\E}{\mathop{\bf E\/}}
\newcommand{\Var}{\mathop{\bf Var\/}}
\newcommand\ZZ{\mathbb{Z}}
\newcommand\RR{\mathbb{R}}
\newcommand\NN{\mathbb{N}}
\newcommand\init{\text{init}}
\newcommand\bea{\begin{eqnarray}}
\newcommand\eea{\end{eqnarray}}
\newcommand\ben{\begin{enumerate}}
\newcommand\een{\end{enumerate}}
\numberwithin{equation}{section}
\begin{document}

\title{Central Limit Theorems for Gaps of Generalized Zeckendorf Decompositions}

\author{Ray Li}
\address{\tiny{Department of Mathematics, Carnegie Mellon University, Pittsburgh, PA 15213}}
\email{ryli@andrew.cmu.edu}

\author{Steven J. Miller}
\address{\tiny{Department of Mathematics and Statistics, Williams College, Williamstown, MA 01267}}
\email{sjm1@williams.edu, Steven.Miller.MC.96@aya.yale.edu}


\subjclass[2010]{60B10, 11B39, 11B05  (primary) 65Q30 (secondary)}

\keywords{Zeckendorf decompositions, Central Limit Theorem, recurrence relations}

\date{\today}

\thanks{The second named author was partially supported by NSF grants DMS1265673 and DMS1561945. The authors thank their colleagues from Math 21-499 at Carnegie Mellon University and CANT 2016 for many helpful conversations.}

\begin{abstract} Zeckendorf proved that every integer can be written uniquely as a sum of non-adjacent Fibonacci numbers $\{1,2,3,5,\dots\}$. This has been extended to many other recurrence relations $\{G_n\}$ (with their own notion of a legal decomposition) and to proving that the distribution of the number of summands of an $M \in [G_n, G_{n+1})$ converges to a Gaussian as $n\to\infty$. We prove that for any non-negative integer $g$ the average number of gaps of size $g$ in many generalized Zeckendorf decompositions is $C_\mu n+d_\mu+o(1)$ for constants $C_\mu > 0$ and $d_\mu$ depending on $g$ and the recurrence, the variance of the number of gaps of size $g$ is similarly $C_\sigma n + d_\sigma + o(1)$ with $C_\sigma > 0$, and the number of gaps of size $g$ of an $M\in[G_n,G_{n+1})$ converges to a Gaussian as $n\to\infty$. The proof is by analysis of an associated two-dimensional recurrence; we prove a general result on when such behavior converges to a Gaussian, and additionally re-derive other results in the literature.
\end{abstract}

\maketitle

\tableofcontents

\section{Introduction}



\subsection{Previous Results}

Zeckendorf \cite{Ze} proved that if the Fibonacci  numbers are defined by $F_1 = 1, F_2 = 2$ and $F_{n+1} = F_n + F_{n-1}$, then every integer can be written as a sum of non-adjacent terms. The standard proof is by the greedy algorithm, though combinatorial approaches exist (see \cite{KKMW}). More generally, one can consider other sequences of numbers and rules for a legal decomposition, and ask when a unique decomposition exists, and if it does how the summands are distributed.

There has been much work on these decomposition problems. In this paper we concentrate on the class of positive linear recurrences (see \cite{Al, DDKMV} for signed decompositions, \cite{DDKMMV} for $f$-decomposition, and \cite{CFHMN1, CFHMN2, CFHMNPX} for some recurrences where the leading term vanishes, which can lead to different limiting behavior).

  \begin{definition}
    A \textit{positive linear recurrence sequence} (\emph{PLRS})
    is a sequence $\{G_n\}$ satisfying
    \begin{equation}
      G_n\ = \ c_1G_{n-1}+\cdots+c_LG_{n-L}
    \end{equation}
    with non-negative integer coefficients $c_i$ with $c_1,c_L,L\ge 1$
    and initial conditions $G_1=1$ and
    $G_{n}\ = \ c_1G_{n-1}+c_2G_{n-2}+\cdots+c_{n-1}G_1+1$
    for $1\le n\le L$.
  \end{definition}

Informally a legal decomposition is one where we cannot use the recurrence relation to replace a linear combination of summands with another summand, and the coefficient of each summand is appropriately bounded. We first describe four results on these sequences (see \cite{DG1998, Ha, Ho, Ke, Len, LT, MW1, MW2, PT1989, Steiner2002, Ste2}, especially \cite{MW1} for proofs), and then discuss our new work.

  \begin{theorem}[Generalized Zeckendorf Theorem]
    \label{thm:general-zeck}
    Let $\{G_n\}$ be a positive linear recurrence sequence.
    For each integer $M>0$, there exists a
    unique \textbf{legal} decomposition
    \begin{equation}
      M\ = \ \sum_{i=1}^N a_iG_{N+1-i}
    \end{equation}
    with $a_1>0$ and the other $a_i\ge 0$,
    and one of the following two conditions,
    which define a legal decomposition, holds.
    \begin{enumerate}
      \ii We have $N<L$ and $a_i=c_i$ for $1\le i\le N$.
      \ii There exists an $s\in\{1,\dots,L\}$ such that
      $a_1=c_1,a_2=c_2,\dots,a_{s-1}=c_{s-1}$
      and $a_s<c_s$,
      $a_{s+1},\dots,a_{s+\ell}=0$ for some $\ell\ge0$,
      and $\{b_i\}_{i=1}^{N-s-\ell}$ (with $b_i=a_{s+\ell+i}$)
      is either legal or empty.
    \end{enumerate}
  \end{theorem}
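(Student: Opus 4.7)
The plan is to establish existence and uniqueness simultaneously by strong induction on $M$, with existence proved via a greedy algorithm and uniqueness proved by a bounding argument. Both arguments are driven by a foundational lemma that pins down the maximum value of a legal decomposition whose largest summand is $G_n$.

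\textbf{Key lemma.} First I would prove by induction on $n$ that the maximum value of $\sum_{i=1}^n a_i G_{n+1-i}$ taken over all legal sequences $(a_1,\dots,a_n)$ equals $G_{n+1}-1$. The extremal sequence is $(c_1,c_2,\dots,c_{L-1},c_L-1)$ followed by the extremal legal sequence on the remaining $n-L$ positions. Using the recurrence this amounts to the telescoping identity
\[
G_{n+1}-1 \ = \ c_1G_n + \cdots + c_{L-1}G_{n-L+2} + (c_L-1)G_{n-L+1} + (G_{n-L+1}-1),
\]
after which the inductive hypothesis applied to $G_{n-L+1}-1$ closes the argument. This lemma immediately says that every positive integer $M$ has a unique index $N$ with $G_N\le M<G_{N+1}$, and any legal decomposition of $M$ must have largest index exactly $N$.

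\textbf{Existence.} Given $M$, choose this $N$ and run the greedy algorithm: let $a_1$ be the largest integer with $a_1G_N\le M$, so $a_1\le c_1$. If $a_1<c_1$, then $M-a_1G_N<G_N$, and induction provides a legal decomposition of the remainder with largest index strictly below $N$; prepending $a_1G_N$ produces case (2) with $s=1$. If $a_1=c_1$, then $M-c_1G_N<c_2G_{N-1}+\cdots+c_LG_{N-L+1}$, and iterating the greedy choice on the remainder either completes as the initial-condition pattern of case (1) (when $N<L$) or reaches the first index $s$ at which the greedy coefficient drops strictly below $c_s$, producing the recursive structure demanded by case (2).

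\textbf{Uniqueness, and the main obstacle.} If $M$ had two distinct legal decompositions, the key lemma forces their largest indices to coincide at $N$, since any legal decomposition with top index below $N$ sums to at most $G_N-1<M$. The leading coefficient must then agree: if $a_1>a_1'$, the terms after $a_1'G_N$ in the second decomposition form a legal decomposition with top index $<N$, hence sum to at most $G_N-1<(a_1-a_1')G_N$. Iterating this argument down the sequence yields uniqueness. \emph{The main obstacle} is the recursive ``look-ahead'' built into case (2): after a prefix $a_1=c_1,\dots,a_{s-1}=c_{s-1},\,a_s<c_s,\,0,\dots,0$, the tail $\{b_i\}$ is itself required to be a legal (or empty) decomposition, so one must check both that the greedy algorithm actually produces this nested block-then-legal-tail structure and that the extremal bound in the key lemma respects the mandatory zero-padding. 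The telescoping form of the identity above is what keeps the case analysis tractable.
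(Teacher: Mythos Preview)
The paper does not actually prove Theorem~\ref{thm:general-zeck}; it is stated as a known result with proofs deferred to the references (see especially \cite{MW1}). So there is no ``paper's own proof'' to compare against.

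That said, your proposal is the standard argument and is essentially what appears in the cited literature. The key lemma that any legal decomposition with top index $n$ has value at most $G_{n+1}-1$ is exactly the right engine, and your telescoping identity is the correct way to prove it (for $n\ge L$; for $n<L$ one uses the initial conditions $G_{n+1}=c_1G_n+\cdots+c_nG_1+1$ directly). The greedy construction for existence and the top-down comparison for uniqueness are both sound. One small point worth tightening in your uniqueness argument: when you peel off the leading coefficient and say ``the terms after $a_1'G_N$ in the second decomposition form a legal decomposition with top index $<N$,'' this is only immediate in the case $a_1'<c_1$ (so $s=1$). When $a_1=a_1'=c_1$ you cannot yet strip the top term and apply the lemma, because the remaining tail is not itself legal until you have matched the full prefix $c_1,\dots,c_{s-1}$ and dropped below $c_s$. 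The clean way to handle this is to compare the two decompositions at the first index where they differ; legality forces one coefficient to be strictly smaller there, and \emph{then} the tail starting at that index is legal, so the key lemma applies. This is presumably what you mean by ``iterating this argument down the sequence,'' but it is worth stating explicitly since it is precisely where the recursive block structure of case~(2) bites.
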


The  next result concerns the average number of summands in decompositions, generalizing Lekkerkerker's \cite{Lek} work on this problem for the Fibonacci numbers. Given  $\{G_n\}$ a PLRS, we have the legal decomposition
    \begin{equation}
      M \ = \  \sum_{i=1}^N a_iG_{N+1-i}  \ = \  G_{i_1} + G_{i_2} + \cdots + G_{i_k}
    \end{equation}
    for some positive integer $k=a_1+a_2+\cdots+a_N$
    and $i_1\ge i_2\ge\cdots\ge i_k$.  The \emph{gaps} in the decomposition of $M$
    are the numbers $i_1-i_2,i_2-i_3,\dots,i_{k-1}-i_k$ (for example, $101 = F_{10} + F_5 + F_3 + F_1$, and thus has gaps 5, 2, and 2). Throughout this paper we let $k_\Sigma(M)$ denote the number of summands of $M$
  and $k_g(M)$ the number of gaps of size $g$ in $M$'s decomposition.  Let $K_{\Sigma,n}$ be the random variable equal to
  $k_\Sigma(M)$ for an $M$ chosen uniformly from $[G_n, G_{n+1})$, and let $K_{g,n}$ be a random variable equal to
  $k_g(M)$ for an $M$ chosen uniformly from $[G_n, G_{n+1})$. Thus $k_g(M)$ is a decomposition of $k_\Sigma(M)$,
  as
  \begin{equation}
    k_\Sigma(M) \ = \  1 + \sum_{g=0}^\infty k_g(M).
  \end{equation}

  \begin{theorem}[Generalized Lekkerkerker's Theorem for PLRS]
    \label{thm:summands-lek}
    Let $\{G_n\}$ be a PLRS, let $K_{\Sigma,n}$ be the random variable
    defined above, and let $\mu_n = \E[K_{\Sigma,n}]$.
    Then there exist constants $C_\mu >0$, $d_\mu$, and $\gamma_\mu\in(0,1)$
    depending only on  $L$ and the $c_i$'s of the recurrence relation
    such that \begin{equation}\mu_n \ = \ C_\mu n + d_\mu + O(\gamma_\mu^n).\end{equation}
  \end{theorem}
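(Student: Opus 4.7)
The plan is to analyze a two-variable recurrence in $(n,k)$ that simultaneously tracks the range $[G_n,G_{n+1})$ and the number of summands, and then extract $\mu_n$ as a logarithmic derivative of a generating polynomial at $y=1$.

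First I would set up $p_{n,k}$ as the number of $M\in[G_n,G_{n+1})$ whose legal decomposition has exactly $k$ summands, and record
\[
    N_n\ :=\ \sum_k p_{n,k}\ =\ G_{n+1}-G_n,\qquad m_n\ :=\ \sum_k k\,p_{n,k},\qquad \mu_n\ =\ m_n/N_n.
\]
The key combinatorial step is to use Theorem \ref{thm:general-zeck}: every legal decomposition in $[G_n,G_{n+1})$ begins with a prefix $a_1=c_1,\dots,a_{s-1}=c_{s-1}$, $a_s<c_s$ for some $s\in\{1,\dots,L\}$, followed by $\ell\ge 0$ zero coefficients, followed by a legal decomposition in $[G_{n-s-\ell},G_{n-s-\ell+1})$ (with appropriate boundary handling from case~(i) of the theorem when $n<L$). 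This partition yields a two-dimensional recurrence
\[
    p_{n,k}\ =\ \sum_{s=1}^{L}\sum_{a=0}^{c_s-1}\sum_{\ell\ge 0} p_{n-s-\ell,\,k-(c_1+\cdots+c_{s-1}+a)}\ +\ (\text{boundary}).
\]

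Next I would package this as a recurrence for the generating polynomial $P_n(y):=\sum_k p_{n,k}y^k$, which is linear in $n$ with $y$-dependent coefficients. At $y=1$ the characteristic polynomial coincides with that of $\{G_n\}$, so its Perron root is $\lambda>1$ and every other root has modulus at most some $\lambda'<\lambda$; set $\gamma_\mu:=\lambda'/\lambda\in(0,1)$. Differentiating the recurrence in $y$ and setting $y=1$ gives
\[
    m_n\ =\ (\text{the $N_n$-homogeneous operator applied to }m_\bullet)\ +\ \sum_i \alpha_i\, N_{n-i}
\]
for explicit $\alpha_i\ge 0$ coming from the factors of $k$ produced by differentiation. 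Because $N_{n-i}\sim c\,\lambda^{n-i}$ with $c>0$ (Perron--Frobenius), the forcing is resonant: by variation of parameters / standard linear-recurrence theory, $m_n=An\lambda^n+B\lambda^n+O((\lambda')^n)$ with $A>0$, since the sum $\sum_i \alpha_i \lambda^{-i}$ is strictly positive whenever $c_1\ge 1$ (and this is precisely the projection of the forcing onto the dominant eigendirection, the obstruction that prevents $A$ from vanishing).

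Dividing by $N_n=c\lambda^n(1+O(\gamma_\mu^n))$ then yields $\mu_n=C_\mu n+d_\mu+O(\gamma_\mu^n)$ with $C_\mu=A/c>0$, which is the claim. The main obstacle I expect is the Step~1 bookkeeping: the inner sum over $\ell\ge 0$ is formally infinite and must be resummed against the $\lambda^{-\ell}$ decay from the recurrence, and one has to verify that the boundary terms coming from case~(i) contribute only to the exponentially small error. A secondary technical point is confirming $C_\mu>0$ in full generality (not just for Fibonacci), which requires the strict positivity argument using $c_1\ge 1$ and the non-degeneracy of the Perron eigenvector of the companion matrix of the $G_n$ recurrence.
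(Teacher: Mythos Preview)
Your overall strategy---set up a two-variable count $p_{n,k}$, form $P_n(y)=\sum_k p_{n,k}y^k$, differentiate at $y=1$ to get a forced linear recurrence for $m_n=P_n'(1)$, then divide by $N_n$---is sound and is essentially what the paper does, repackaged. The paper abstracts the last step into its Theorem~\ref{thm:2d-recursion} (working with the ratio $\mu_n$ directly via the ``noisy recurrence'' Lemma~\ref{lem:key-technical} rather than variation of parameters on $m_n$), but that is a cosmetic difference.

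The genuine gap is in Step~1. The recurrence you write,
\[
p_{n,k}=\sum_{s=1}^{L}\sum_{a=0}^{c_s-1}\sum_{\ell\ge 0} p_{n-s-\ell,\,k-(d_{s-1}+a)}+\cdots,
\]
has \emph{unbounded depth} in $n$ because of the $\sum_{\ell\ge0}$. You then speak of ``the characteristic polynomial'' of the resulting recurrence for $P_n(y)$ and its Perron root, but an infinite-depth recurrence has no characteristic polynomial in the sense you need, and your suggested fix (``resummed against the $\lambda^{-\ell}$ decay'') is not the right mechanism: there is no analytic resummation here, since you do not yet know the asymptotics of $P_n$---that is precisely what you are trying to prove. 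What is actually needed is an \emph{algebraic} reduction to a bounded-depth recurrence. One route is the partial-sum trick: set $q_{n,k}=\sum_{m<n}p_{m,k}$, rewrite your $\ell$-sum as a single $q$, and then difference in $n$ to eliminate $q$. The paper instead imports the finished product directly, citing Miller--Wang \cite{MW2} for the depth-$L$ recurrence
\[
p_{n,k}=\sum_{i=1}^{L}\ \sum_{j=d_{i-1}}^{d_i-1} p_{n-i,\,k-j}\qquad(n\ge L),
\]
where $d_i=c_1+\cdots+c_i$. Once you have this (or any bounded-depth recurrence with the correct marginal $\hat t_i=c_i$), your differentiation/forcing argument goes through exactly as you describe, and the positivity of $C_\mu$ follows since the forcing coefficients $\alpha_i=\sum_{j=d_{i-1}}^{d_i-1} j$ are nonnegative and not all zero. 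Your secondary worry about boundary terms from case~(i) of Theorem~\ref{thm:general-zeck} is a non-issue once the recurrence holds for $n\ge L$: initial conditions affect only the constants $d_\mu$ and the implicit $O$-constant, as the paper's Lemma~\ref{lem:key-technical} makes explicit.
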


  \begin{theorem}[Variance is Linear for PLRS]
    \label{thm:summands-var}
    Let $\{G_n\}$ be a PLRS, let $K_{\Sigma,n}$ be the random variable
    defined above, and let $\sigma_n^2 = \Var[K_{\Sigma,n}]$.
    Then there exist constants $C_\sigma >0$, $d_\sigma$, and $\gamma_\sigma\in(0,1)$
    depending only on  $L$ and the $c_i$'s of the recurrence relation
    such that \begin{equation}\sigma_n^2 \ = \ C_\sigma n + d_\sigma + O(\gamma_\sigma^n).\end{equation}
  \end{theorem}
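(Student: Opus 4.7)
The plan is to extract the variance asymptotic from a two-dimensional recurrence that simultaneously counts integers in $[G_n, G_{n+1})$ and tracks summand counts, mirroring the derivation of Theorem \ref{thm:summands-lek}. Let $p_{n,k}$ be the number of $M\in[G_n,G_{n+1})$ with $k_\Sigma(M)=k$, and form the generating function $F_n(x) := \sum_k p_{n,k}\, x^k$. Partitioning $[G_n, G_{n+1})$ according to the allowed top block $(a_1,\dots,a_s)=(c_1,\dots,c_{s-1},j)$ with $j<c_s$ of Theorem \ref{thm:general-zeck}, and observing that the remaining summands form a legal decomposition of an integer with strictly smaller largest summand, yields a linear (in $n$) recurrence for $F_n(x)$ with coefficients polynomial in $x$. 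At $x=1$ the characteristic polynomial of this recurrence coincides with that of $\{G_n\}$, which by Perron--Frobenius considerations for PLRS admits a unique simple root $\lambda>1$ of maximum modulus, with all other roots of modulus strictly less than $\lambda$.

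Differentiating the recurrence once and twice at $x=1$, and setting $E_n := F_n'(1) = \sum_M k_\Sigma(M)$ and $S_n := F_n''(1)+F_n'(1) = \sum_M k_\Sigma(M)^2$, one obtains inhomogeneous linear recurrences for $E_n$ and $S_n$ whose homogeneous parts match that of $F_n(1)=G_{n+1}-G_n$. Standard linear-recurrence theory (a unique dominant simple root $\lambda$ driven by polynomial-in-$n$ inhomogeneities) then delivers
\[
F_n(1) = \alpha\lambda^n + O(\gamma_0^n),\quad E_n = (\beta n+\beta')\lambda^n + O(\gamma_1^n),\quad S_n = (\kappa n^2 + \kappa' n + \kappa'')\lambda^n + O(\gamma_2^n),
\]
where each $\gamma_i<\lambda$ is governed by the second-largest modulus root of the characteristic polynomial. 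Dividing by $F_n(1)$ gives $\mu_n = C_\mu n + d_\mu + O(\gamma_\mu^n)$ (recovering Theorem \ref{thm:summands-lek}) and $\E[K_{\Sigma,n}^2] = (C_\mu n + d_\mu)^2 + C_\sigma n + d_\sigma + O(\gamma_\sigma^n)$, so $\sigma_n^2 = \E[K_{\Sigma,n}^2] - \mu_n^2 = C_\sigma n + d_\sigma + O(\gamma_\sigma^n)$.

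The main obstacle is showing $C_\sigma>0$, since the leading quadratic-in-$n$ cancellation between $\E[K_{\Sigma,n}^2]$ and $\mu_n^2$ might a priori extend to the linear term. One route is to compute $C_\sigma$ explicitly from $P_x(t)$ and $\partial_x P_x(t)$ evaluated at $(x,t)=(1,\lambda)$, and invoke the strict log-convexity of $x\mapsto F_n(x)$ near $x=1$ that follows from the non-degeneracy of the distribution of $K_{\Sigma,n}$. A more elementary route is to exhibit two disjoint sub-families of $[G_n,G_{n+1})$, each of asymptotic density bounded away from $0$ (constructed by prescribing different fixed patterns of coefficients in the first $\Theta(n)$ positions of the decomposition), whose expected summand counts differ by a linear-in-$n$ quantity; a conditional-variance argument then forces $\Var[K_{\Sigma,n}] = \Theta(n)$, pinning down $C_\sigma>0$. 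The error-rate $\gamma_\sigma\in(0,1)$ is inherited from the gap between $\lambda$ and the next-largest modulus root in the characteristic polynomial, uniform control being afforded by the fact that this polynomial depends only on $L$ and the $c_i$.
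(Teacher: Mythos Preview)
Your approach is correct in outline and is essentially the classical Miller--Wang generating-function method that the paper explicitly sets out to bypass. The paper instead packages everything into its general 2D-recursion result (Theorem~\ref{thm:2d-recursion}): one verifies that the summands recurrence $p_{n,k}=\sum_{i=1}^L\sum_{j=d_{i-1}}^{d_i-1}p_{n-i,k-j}$ fits that framework, and then the theorem hands back the linear mean, linear variance, and Gaussian behaviour all at once. The payoff is in the positivity of $C_\sigma$: Theorem~\ref{thm:2d-recursion} supplies the closed form
\[
C_\sigma \;=\; \frac{\sum_{i,j}\frac{t_{i,j}}{\lambda_1^i}(j-C_\mu i)^2}{\sum_{i,j}\frac{t_{i,j}\,i}{\lambda_1^i}},
\]
and for summands every $t_{i,j}\in\{0,1\}$ is nonnegative with $t_{1,0}=1$, so the single term $\frac{t_{1,0}}{\lambda_1}(0-C_\mu)^2>0$ already forces $C_\sigma>0$. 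This is a two-line verification, in contrast to what the paper calls the ``tedious technical calculations'' of the direct route.

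One caution on your sketch for $C_\sigma>0$: the strict log-convexity of $x\mapsto F_n(x)$ near $x=1$ only tells you $\sigma_n^2>0$ for each fixed $n$; it does not by itself rule out $\sigma_n^2=d_\sigma+o(1)$ with $C_\sigma=0$. To get $C_\sigma>0$ from that side you would need a \emph{uniform-in-$n$} lower bound on the second derivative of $\log F_n$ at $1$, which is not automatic. Your second route---fixing two distinct coefficient patterns on an initial block of length $\Theta(n)$ and decomposing the variance conditionally---does work and is close in spirit to the argument in \cite{CFHMNPX} that the paper cites as an alternative; but it needs to be executed, not merely gestured at.
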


  \begin{theorem}[Gaussian Behavior for Number of Summands in PLRS]
    \label{thm:summands-gauss}
    Let $\{G_n\}$ be a PLRS and let $K_{\Sigma,n}$ be the random variable defined above.
    The mean $\mu_n$ and variance $\sigma_n^2$ of $K_{\Sigma,n}$ grow linearly in $n$,
    and $(K_{\Sigma,n}-\mu_n)/\sigma_n$ converges weakly
    to the standard normal $N(0,1)$ as $n\to\infty$.
  \end{theorem}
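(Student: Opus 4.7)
The plan is a generating-function spectral analysis combined with L\'evy's continuity theorem (equivalently, Hwang's quasi-power framework). First I would let $p_{n,k}$ denote the number of $M \in [G_n,G_{n+1})$ whose legal decomposition has exactly $k$ summands and set $F_n(y) = \sum_k p_{n,k}\, y^k$, so that the probability generating function of $K_{\Sigma,n}$ is $F_n(y)/F_n(1)$ with $F_n(1) = G_{n+1}-G_n$. Theorem \ref{thm:general-zeck} (splitting a legal decomposition on the position $s$ of the first ``non-greedy'' coefficient and on the trailing zero-block length $\ell$) partitions $[G_n,G_{n+1})$ into pieces in bijection with smaller intervals $[G_{n'},G_{n'+1})$, each with a fixed additive shift in the summand count; translating this combinatorial description into $F_n(y)$ gives a linear recurrence of bounded order
\[
  F_n(y) \;=\; \sum_{i=1}^L Q_i(y)\,F_{n-i}(y),
\]
whose coefficients $Q_i(y)$ are explicit polynomials in $y$ specializing to the $c_i$ at $y=1$.

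Second, I would run the standard spectral asymptotics on this recurrence, parametrized analytically in $y$. At $y=1$ the characteristic polynomial $x^L - c_1 x^{L-1} - \cdots - c_L$ is that of the PLRS itself, so by the Perron--Frobenius structure of the companion matrix (non-negativity together with $c_1,c_L\ge 1$) it admits a simple real dominant root $\lambda(1)>1$ strictly larger in modulus than the rest. By analytic perturbation theory and continuity of roots, there is a complex neighborhood $U$ of $1$ on which the characteristic polynomial continues to have a unique simple dominant root $\lambda(y)$, analytic in $y$, whose modulus strictly exceeds that of the remaining roots uniformly on $U$. Separating this eigenvalue yields the uniform asymptotic
\[
  F_n(y) \;=\; A(y)\,\lambda(y)^n + O(\gamma^n), \qquad y \in U,
\]
with $A$ analytic on $U$, $A(1)\ne 0$, and some $0<\gamma<\min_{y\in U}|\lambda(y)|$.

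Finally I would conclude via characteristic functions. Differentiating $\log (F_n(y)/F_n(1))$ at $y=1$ and inserting the asymptotic above identifies $C_\mu = (\log\lambda)'(1)$ and $C_\sigma = (\log\lambda)''(1) + (\log\lambda)'(1)$, consistent with (and positive by) Theorems \ref{thm:summands-lek} and \ref{thm:summands-var}. For fixed $t\in\mathbb{R}$, set $y_n = e^{it/\sigma_n}$; for large $n$ these lie in $U$. A second-order Taylor expansion of $\log\lambda$ and $\log A$ about $y=1$, combined with $\mu_n = C_\mu n + O(1)$ and $\sigma_n^2 = C_\sigma n + O(1)$, then gives
\[
  \log \E\!\left[e^{it(K_{\Sigma,n}-\mu_n)/\sigma_n}\right] \;=\; -\frac{t^2}{2} + o(1),
\]
and L\'evy's continuity theorem yields $(K_{\Sigma,n}-\mu_n)/\sigma_n \Rightarrow N(0,1)$. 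The main obstacle is the uniform spectral-gap asymptotic in step two: verifying that $\lambda(1)$ is simple and strictly dominant via Perron--Frobenius (the irreducibility input being $c_1, c_L \ge 1$), and then extending this to a complex neighborhood of $1$ via analytic perturbation, with enough uniformity that the error $O(\gamma^n)$ survives when one takes $y = e^{it/\sigma_n}$ tending to $1$ at rate $1/\sqrt{n}$.
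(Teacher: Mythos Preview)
Your approach is correct and is in fact the classical route to such central limit theorems (essentially Bender/Hwang quasi-powers), but it is genuinely different from what the paper does. The paper reproves Theorem~\ref{thm:summands-gauss} by feeding the Miller--Wang recursion $p_{n,k}=\sum_{i=1}^L\sum_{j=d_{i-1}}^{d_i-1}p_{n-i,k-j}$ into its general 2D-recursion machine, Theorem~\ref{thm:2d-recursion}, which is proved by the method of moments: the centered moments $\tilde\mu_n(m)$ are shown recursively (via Theorem~\ref{thm:tilde-mu-recursion} together with Lemmas~\ref{lem:key-technical} and~\ref{lem:key-technical-polyn}) to be polynomials of degree $\lfloor m/2\rfloor$ in $n$ with the right leading coefficients, whence the moment conditions for Gaussian convergence follow. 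No analytic continuation, no characteristic functions, no spectral perturbation.

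The trade-offs are as follows. Your argument is shorter and more direct for this particular statement, and the spectral gap at $y=1$ is indeed available exactly as you say (the companion matrix is primitive since $c_1,c_L\ge 1$). The paper's detour through a real-variable moment recursion pays off elsewhere: their Theorem~\ref{thm:2d-recursion} only needs the row sums $\hat t_i=\sum_j t_{i,j}$ to be non-negative, not the individual $t_{i,j}$, and this is precisely what is required for the gap recurrences \eqref{eq:p_gnk}, where several $t_{i,j}$ are negative. In your framework the $Q_i(y)$ would then fail to be non-negative at $y=1$, so one would need an extra argument (beyond Perron--Frobenius on a non-negative companion matrix) to get the spectral picture for the gap problem; the paper's method sidesteps this entirely. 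One small caution: you invoke Theorems~\ref{thm:summands-lek} and~\ref{thm:summands-var} for $C_\mu,C_\sigma>0$, which is fine here since they are stated as prior results, but if you wanted your argument to be self-contained you would have to verify $(\log\lambda)''(1)+(\log\lambda)'(1)>0$ directly.
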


Surprisingly, much less has been written on $k_g(M)$ and $K_{g,n}$. We show that similar Central Limit results hold for gaps. The techniques we introduce to prove these results allow us to easily prove some results already in the literature such as the previous three theorems. These proofs are often done through tedious technical calculations, which we can bypass.

  \subsection{New Results}

Beckwith et al. \cite{BBGILMT2013}, Bower et al. \cite{BILMT2015}, and Dorward et al. \cite{DFFHMPP} explored the distribution of gaps
in Generalized Zeckendorf Decompositions arising from PLRS, proving (in the limit $n\to\infty$) exponential decay in the probability that a gap in the decomposition of $M \in [G_n, G_{n+1})$ has length $g$ as $g$ grows and determining that the distribution of the longest gap between summands behaves similarly to what is seen in the distribution of the longest run of heads in tossing a biased coin. We improve on the first result and establish lower order terms (previous work had $O(1)$ instead of $d + o(1)$ below), then prove the variance has a similar linear behavior, and finally show Gaussian behavior for fixed $g$. See \cite{LM} for a similar analysis concentrating on the Fibonacci case, where the simplicity of  the defining recurrence allows simplifications in the analysis.

  \begin{theorem}[Generalized Lekkerkerker's Theorem for Gaps of Decompositions]
    \label{thm:gap-lek}
    Let $g\ge0$ be a fixed positive integer.
    Let $\{G_n\}$ be a PLRS with the additional constraint that all $c_i$s are positive.
    Suppose there exists $n_0\in\NN$
    such that $K_{g,n}$, the random variable defined above,
    is non-trivial (i.e., is not the constant 0) for $n\ge n_0$.
    Let $\mu_{g,n} = \E[K_{g,n}]$.
    Then there exists constants $C_{\mu, g}>0$, $d_{\mu,g}$, and $\gamma_{\mu,g}\in(0,1)$
    depending only on $g$, $L$, and the $c_i$'s of the recurrence relation
    such that \be \mu_{g,n}\ =\ C_{\mu,g} n + d_{\mu,g} + O(\gamma_{\mu,g}^n).\ee
  \end{theorem}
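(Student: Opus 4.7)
The plan is to introduce a polynomial that jointly tracks legal decompositions and their size-$g$ gap counts, derive a linear recurrence for it, and read off the mean from a standard inhomogeneous-recurrence analysis. For each $n\ge 1$, let $a_{n,k}$ be the number of $M\in[G_n,G_{n+1})$ whose legal decomposition has exactly $k$ gaps of size $g$, and set $A_n(x)=\sum_k a_{n,k}x^k$. Then $A_n(1)=G_{n+1}-G_n$ and $A_n'(1)=\sum_{M\in[G_n,G_{n+1})}k_g(M)$, so $\mu_{g,n}=A_n'(1)/A_n(1)$, and it suffices to understand the asymptotics of the numerator and denominator separately.

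To obtain a recurrence for $A_n(x)$, I would decompose a legal $M\in[G_n,G_{n+1})$ according to its leading block: by Theorem~\ref{thm:general-zeck}, the decomposition begins with digits $a_1=c_1,\dots,a_{s-1}=c_{s-1}$, then $a_s<c_s$, then $\ell$ zero digits for some $\ell\ge 0$, after which the tail is a (possibly empty) legal decomposition supported on $G_1,\dots,G_{n-s-\ell}$. Summing over the finitely many leading blocks $(s,a_s,\ell)$ and recording which junctions create a new gap of size $g$ yields a linear recurrence
\be A_n(x)\ =\ \sum_{i,j}B_{i,j}(x)\,A_{n-i}(x)\ +\ (\text{boundary terms}), \ee
where each coefficient $B_{i,j}(x)$ is a monomial $N_{i,j}\,x^{\delta_{i,j}}$ with $N_{i,j}\in\NN$ counting junction configurations and $\delta_{i,j}\in\{0,1\}$ marking the creation of a size-$g$ gap. (The bookkeeping is made fully local by carrying a finite-state vector of polynomials indexed by a bounded window of trailing digits, so that appending a new digit triggers the $x$-marker exactly when a new size-$g$ gap is formed.) Setting $x=1$ recovers the PLRS recurrence for $G_{n+1}-G_n$, whose characteristic polynomial has a simple dominant real root $\lambda>1$ and all other roots of modulus strictly smaller.

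Differentiating at $x=1$ and writing $B_n=A_n'(1)$ gives $B_n=\sum_i c_i B_{n-i}+f_n$, where $f_n$ is a bounded linear combination of previous values $A_m(1)=G_{m+1}-G_m$ produced by the $\delta_{i,j}$'s. Since $f_n=\alpha\lambda^n+O(\nu^n)$ for some $\nu<\lambda$, and since $\lambda$ is a simple root of the characteristic polynomial, resonance forces a particular solution $B_n=\alpha'n\lambda^n+\beta\lambda^n+O(\tau^n)$ for some $\tau<\lambda$; dividing by $A_n(1)=c\lambda^n+O(\nu^n)$ then yields $\mu_{g,n}=C_{\mu,g}n+d_{\mu,g}+O(\gamma_{\mu,g}^n)$ as claimed. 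The main obstacle is verifying $C_{\mu,g}>0$: this reduces to showing $\alpha\ne 0$, which is where the hypothesis that every $c_i>0$ is used (a Perron--Frobenius-type argument gives strict positivity of the left and right eigenvectors of the companion matrix at $\lambda$, so no cancellation can kill the $\lambda^n$ component of $f_n$), together with the non-triviality hypothesis $K_{g,n_0}\not\equiv 0$, which guarantees at least one configuration seeding a strictly positive contribution to the forcing term. Once this positivity and the simplicity of $\lambda$ are pinned down, the remaining asymptotics are standard.
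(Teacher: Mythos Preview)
Your overall strategy---set up the generating polynomial $A_n(x)=\sum_k p_{g,n,k}x^k$, find a linear recurrence for it, differentiate at $x=1$, and read off $\mu_{g,n}=C_{\mu,g}n+d_{\mu,g}+O(\gamma^n)$ via a resonance argument---is exactly what the paper does (its $P_n(x)$ is your $A_n(x)$, and Lemma~\ref{lem:mu-calculation} is precisely the resonance step you describe). The difference, and the gap in your argument, is in the form of the recurrence and the positivity of $C_{\mu,g}$.

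Your claim that the scalar recurrence has coefficients $B_{i,j}(x)=N_{i,j}x^{\delta_{i,j}}$ with $N_{i,j}\in\NN$ and $\delta_{i,j}\in\{0,1\}$ is not correct. First, a single leading block can create many size-$g$ gaps at once (for $g=0$, a digit $a_s=5$ already produces four zero-gaps), so $\delta$ is not confined to $\{0,1\}$. Second, and more seriously, the block-plus-tail decomposition you describe gives an \emph{infinite}-order recurrence in $n$ (one term for each $\ell\ge 0$); collapsing it to finite order requires telescoping against $A_{n-1}(x)$, and this step introduces genuinely signed coefficients. Already for the Fibonaccis with $g=2$ one gets
\[
A_n(x)\ =\ A_{n-1}(x)+xA_{n-2}(x)+(1-x)A_{n-3}(x),
\]
and the paper's general recurrence (Lemma~\ref{lem:zeck-num-gap-count}) likewise has terms of the form $p_{g,n-i,k-1}-p_{g,n-i,k}$ for $g\ge 1$. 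Once the coefficients are signed, your Perron--Frobenius sentence no longer proves $\alpha\neq 0$: positivity of the companion eigenvectors says nothing about a signed combination of the $A_m(1)$'s. The paper does not use Perron--Frobenius here at all; it computes $C_\mu$ explicitly from the recurrence coefficients and checks $C_\mu>0$ by a case analysis on $g=0$, $g=1$, $g\ge 2$ (Appendix~\ref{app:C_mu/sigma-positive}). Your parenthetical about a finite-state vector of polynomials is a legitimate alternative route---with a nonnegative transfer matrix $M(x)$ one does get $\vec u^{\,T}M'(1)\vec v>0$ from Perron--Frobenius plus nontriviality---but that computation lives at the matrix level and is not the scalar recurrence $B_n=\sum_i c_iB_{n-i}+f_n$ you actually wrote down; you would need to set up and analyze the vector recurrence explicitly for the argument to go through.
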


  \begin{theorem}[Variance is Linear for Gaps of Decompositions]
    \label{thm:gap-var}
    Let $g\ge0$ be a fixed positive integer.
    Let $\{G_n\}$ be a PLRS with the additional constraint that all $c_i$s are positive.
    Suppose there exists $n_0\in\NN$
    such that $K_{g,n}$, the random variable defined above,
    is non-trivial for $n\ge n_0$.
    Let $\sigma_{g,n}^2 = \Var[K_{g,n}]$.
    Then there exists constants $C_{\sigma,g}>0$, $d_{\sigma, g}$, and $\gamma_{\sigma,g}\in(0,1)$
    depending only on $g$, $L$, and the $c_i$'s of the recurrence relation
    such that \be \sigma_{g,n}^2\ =\ C_{\sigma,g} n + d_{\sigma, g} + O(\gamma_{\sigma,g}^n).\ee
  \end{theorem}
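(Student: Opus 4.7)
The strategy parallels that of Theorem \ref{thm:gap-lek} but tracks the second moment in addition to the first. I would introduce the generating polynomial
$$F_n(y) \;=\; \sum_{M \in [G_n, G_{n+1})} y^{k_g(M)},$$
so that $F_n(1) = |[G_n, G_{n+1})|$, $F_n'(1) = \sum_M k_g(M)$, and $F_n''(1) = \sum_M k_g(M)(k_g(M)-1)$. Legal PLRS decompositions are generated by a finite-state automaton (indexed, say, by the current position within the possible leading block), so $F_n(y)$ is a coordinate of a vector $\mathbf{F}_n(y)$ satisfying a linear recurrence $\mathbf{F}_n(y) = T(y)\mathbf{F}_{n-1}(y)$ for a polynomial transfer matrix $T(y)$ encoding the legality rules. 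Differentiating at $y = 1$ produces a two-dimensional inhomogeneous linear recurrence for $(F_n(1), F_n'(1))$ of exactly the type handled by the general two-dimensional-recurrence framework already used to prove Theorem \ref{thm:gap-lek}; differentiating twice yields an analogous recurrence for $F_n''(1)$, again of the same type.

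Applying the general recurrence result to each of the three systems gives
$$F_n(1) \;=\; \alpha\lambda^n\bigl(1 + O((\rho/\lambda)^n)\bigr), \qquad F_n'(1) \;=\; \alpha\lambda^n\bigl(An + B + O((\rho/\lambda)^n)\bigr),$$
$$F_n''(1) \;=\; \alpha\lambda^n\bigl(Cn^2 + Dn + E + O((\rho/\lambda)^n)\bigr),$$
where $\lambda > |\rho| > 0$ are the dominant and sub-dominant roots of the characteristic polynomial of $T(1)$, and $A = C_{\mu,g}$, $B = d_{\mu,g}$ match Theorem \ref{thm:gap-lek}. Combining via
$$\sigma_{g,n}^2 \;=\; \frac{F_n''(1)}{F_n(1)} + \mu_{g,n} - \mu_{g,n}^2,$$
the would-be $n^2$ coefficient equals $C - A^2$. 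In terms of the Perron eigenvalue $\lambda(y)$ of $T(y)$ near $y = 1$, both $C$ and $A^2$ reduce to $(\lambda'(1)/\lambda(1))^2$, so this coefficient vanishes identically. After the $n^2$ cancellation we read off $\sigma_{g,n}^2 = C_{\sigma,g} n + d_{\sigma,g} + O(\gamma_{\sigma,g}^n)$ with $\gamma_{\sigma,g} = \rho/\lambda \in (0,1)$.

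The main obstacle is showing $C_{\sigma,g} > 0$, because the cancellation of the $n^2$ term is purely algebraic and does not use the non-triviality hypothesis on $K_{g,n}$. I would establish strict positivity combinatorially: by hypothesis there exist $M_1, M_2 \in [G_{n_0}, G_{n_0+1})$ with $k_g(M_1) \ne k_g(M_2)$, and for each large $n$ a positive-density subset of $[G_n, G_{n+1})$ admits $\Theta(n)$ disjoint windows of width $O(n_0)$ into each of which either the $M_1$-pattern or the $M_2$-pattern can be substituted without violating legality at the boundaries. The count of gaps of size $g$ then splits as a sum of $\Theta(n)$ Bernoulli-like contributions of non-vanishing individual variance, giving $\Var[K_{g,n}] \ge c n$ for some $c > 0$ and hence $C_{\sigma,g} > 0$. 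The exponential error $O(\gamma_{\sigma,g}^n)$ is inherited from the sub-dominant eigenvalues in the three asymptotic expansions above.
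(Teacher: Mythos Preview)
Your outline is in the same spirit as the paper's: both routes compute the first two moments of $K_{g,n}$ via a linear recurrence/transfer system driven by the legality automaton, extract polynomial-in-$n$ asymptotics with exponentially small error, and observe the $n^2$ cancellation in the variance. The paper packages this once and for all in Theorem~\ref{thm:2d-recursion}: Lemma~\ref{lem:zeck-num-gap-count} derives an explicit scalar two-dimensional recurrence $p_{g,n,k}=\sum t_{i,j}p_{g,n-i,k-j}$ (of depth $L+g$ in $n$), and then Theorem~\ref{thm:2d-recursion} is applied directly; the moment recursion (Theorem~\ref{thm:tilde-mu-recursion}) together with Lemmas~\ref{lem:key-technical} and~\ref{lem:key-technical-polyn} already contain your differentiate-and-iterate argument and the $n^2$ cancellation. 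So on the analytic side you are re-deriving, in transfer-matrix language, what the paper abstracts into a single lemma; both approaches work and yours has the merit of making the eigenvalue interpretation $A=\lambda'(1)/\lambda(1)$, $C=A^2$ explicit.

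The substantive divergence is in showing $C_{\sigma,g}>0$. The paper does this algebraically: it reads off the coefficients $t_{i,j}$ from the explicit recurrences in Lemma~\ref{lem:zeck-num-gap-count} and checks by direct (case-by-case in $g=0$, $g=1$, $g\ge 2$) computation that the expression $\sum_{i,j}\frac{t_{i,j}}{\lambda_1^i}(j-C_\mu i)^2$ is strictly positive (Appendix~\ref{app:C_mu/sigma-positive}). Your proposed combinatorial substitute --- $\Theta(n)$ disjoint windows into which one freely swaps an $M_1$- or $M_2$-pattern --- is morally right but is the part of your sketch that does not yet constitute a proof. The legality rules for PLRS decompositions couple adjacent blocks (a block can only start after a strict inequality $a_s<c_s$ followed by zeros), so ``substituting a pattern without violating legality at the boundaries'' requires you to build buffer zones and argue that the resulting map into $[G_n,G_{n+1})$ is injective on a positive-density set; and to turn ``Bernoulli-like contributions'' into an honest variance lower bound you need either independence or a second-moment/decoupling inequality. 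None of this is impossible, but it is real work that your proposal hides, whereas the paper's algebraic route, though less conceptual, is complete as written.
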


These two theorems follow as intermediate results in the proof of the next theorem, which is the main result of this paper. The next theorem proves that we also obtain Gaussian behavior if we fix the gap size and if that gap size occurs; note we have to be careful, as there are never gaps of length 1 between summands in Zeckendorf decompositions arising from Fibonacci numbers, and we must make sure to exclude such cases.

  \begin{theorem}[Gaussian Behavior for Gaps of Decompositions]
    \label{thm:gap-gauss}
    Let $g\ge0$ be a fixed positive integer.
    Let $\{G_n\}$ be a PLRS with the additional constraint that all $c_i$s are positive.
    Suppose there exists $n_0\in\NN$
    such that $K_{g,n}$, the random variable defined above,
    is non-trivial for $n\ge n_0$.
    The mean $\mu_{g,n}$ and variance $\sigma_{g,n}^2$ of $K_{g,n}$
    grow linearly in $n$, and $(K_{g,n}-\mu_{g,n})/\sigma_{g,n}$
    converges weakly to the standard normal $N(0,1)$ as $n\to\infty$.
  \end{theorem}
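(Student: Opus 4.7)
The plan is to reduce the claim to a general Central Limit Theorem for bivariate linear recurrences, and then verify the hypotheses of that theorem for $K_{g,n}$. Let $p_{n,k}$ be the number of $M\in[G_n,G_{n+1})$ whose Zeckendorf decomposition contains exactly $k$ gaps of size $g$, and set $F_n(y) \defeq \sum_k p_{n,k}\, y^k$, so that $F_n(1) = G_{n+1}-G_n$ and $F_n(y)/F_n(1) = \E[y^{K_{g,n}}]$. The first step is to derive a linear recurrence of the form
\begin{equation}
F_n(y) \ = \ \sum_{j=1}^J Q_j(y)\, F_{n-j}(y) + R_n(y),
\end{equation}
where $J$ and the polynomials $Q_j(y)$ depend only on $L$, $g$, and the $c_i$, and $R_n(y)$ is a boundary term that is eventually zero (or exponentially small). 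To obtain this, condition on the leading block of the decomposition as in Theorem 1.1: once the leading block is chosen the remaining summands form a legal decomposition of an integer in some $[G_{n-j},G_{n-j+1})$, and the factor $Q_j(y)$ records the number of gaps of size $g$ created inside the leading block and at its interface with the tail. Because legality is a local condition of range at most $L$ and because detecting a gap of size $g$ requires only a sliding window of length $g+1$, the cases are enumerable and the coefficients $Q_j(y)$ are genuinely independent of $n$ for $n$ large.

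Once the recurrence is in hand, I would invoke the general two-dimensional CLT result the authors advertise, whose content is essentially Hwang's quasi-power theorem. Letting $p(x,y) \defeq x^J - \sum_j Q_j(y)\, x^{J-j}$, the root $\lambda(y)$ of $p(\cdot,y)$ of maximal modulus is analytic in a complex neighborhood of $y=1$, since $\lambda(1)$ is simple and strictly dominant by Perron--Frobenius applied to the companion matrix (the hypothesis $c_i>0$ is used here to ensure aperiodicity). Consequently
\begin{equation}
F_n(y) \ = \ A(y)\,\lambda(y)^n + O(\rho^n), \qquad \rho<\lambda(1),
\end{equation}
for some $A(y)$ analytic near $1$ with $A(1)>0$. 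Logarithmic differentiation at $y=1$ yields the mean and variance expansions of Theorems 1.5 and 1.6, with $C_{\mu,g} = \lambda'(1)/\lambda(1)$ and $C_{\sigma,g}$ a quadratic expression in $\lambda(1),\lambda'(1),\lambda''(1)$. Gaussian convergence follows from Levy's continuity theorem applied to
\begin{equation}
\E\!\left[\exp\!\left(it\,\frac{K_{g,n}-\mu_{g,n}}{\sigma_{g,n}}\right)\right] \ = \ \frac{F_n(e^{it/\sigma_{g,n}})}{F_n(1)}\, e^{-it\mu_{g,n}/\sigma_{g,n}},
\end{equation}
whose logarithm, using the quasi-power form, expands to $-t^2/2 + o(1)$ uniformly on compact sets in $t$.

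I expect two main obstacles. The first is combinatorial: setting up the bivariate recurrence cleanly is delicate because a gap of size $g$ can arise inside the leading block, inside the tail, or straddling the interface, and the case $s+\ell$ near the boundary of Theorem 1.1(2) requires careful bookkeeping so that the $Q_j(y)$ are truly $n$-independent. The second and more serious obstacle is verifying the non-degeneracy $C_{\sigma,g}>0$, which amounts to showing strict log-convexity of $\lambda$ at $y=1$ and can fail only when $K_{g,n}$ is asymptotically deterministic. The non-triviality hypothesis rules out the trivial degeneracy $K_{g,n}\equiv 0$, but excluding a nontrivial deterministic behavior requires exhibiting two decompositions of integers in $[G_n,G_{n+1})$ that differ precisely in their number of gaps of size $g$; the hypothesis that every $c_i$ is positive provides exactly the flexibility to construct such an exchange, and this is where I would expect the most care to be required.
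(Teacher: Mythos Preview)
Your plan is sound and would yield a correct proof, but it diverges from the paper's approach in two places worth noting.

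First, the paper's general ``two-dimensional CLT'' (Theorem~\ref{thm:2d-recursion}) is \emph{not} Hwang's quasi-power theorem: it is proved by the method of moments. The paper sets up the generating functions $P_n(x)$ exactly as you do, but instead of studying the dominant root $\lambda(y)$ analytically, it derives a recursion for the centered moments $\tilde\mu_n(m)=\E[(K_{g,n}-\mu_{g,n})^m]$ (Theorem~\ref{thm:tilde-mu-recursion}) and shows, via the technical Lemmas~\ref{lem:key-technical} and~\ref{lem:key-technical-polyn}, that $\tilde\mu_n(2m)$ is a degree-$m$ polynomial in $n$ with leading coefficient $(2m-1)!!\,C_\sigma^m$ while $\tilde\mu_n(2m+1)$ has degree at most $m$. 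Your analytic route is shorter and more standard in the analytic-combinatorics literature; the paper's route avoids complex analysis entirely and also delivers the exponentially small error terms in the mean and variance directly.

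Second, and more substantively, the verification of $C_{\sigma,g}>0$ is done quite differently. You sketch a combinatorial exchange argument (find two decompositions in $[G_n,G_{n+1})$ differing only in their count of size-$g$ gaps). The paper instead writes down the explicit recurrence coefficients $t_{i,j}$ from Lemma~\ref{lem:zeck-num-gap-count} and plugs into the closed-form expression~\eqref{eq:C_mu-and-C_sigma} for $C_\sigma$, doing separate algebraic case analyses for $g=0$, $g=1$, and $g\ge2$ (Appendix~\ref{app:C_mu/sigma-positive}). Your exchange argument, if carried out carefully, would need to produce a \emph{local} swap that can be executed at linearly many positions in $n$ to force linear growth of variance; merely exhibiting two decompositions shows $\sigma_{g,n}^2>0$, not $C_{\sigma,g}>0$. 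This is fixable but is the place where your sketch is thinnest. Also note that for $g\ge1$ some of the $t_{i,j}$ are negative, so the Perron--Frobenius argument you invoke only applies at $y=1$; the extension to a complex neighborhood relies on simplicity of $\lambda(1)$ and perturbation, as you implicitly assume.
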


  Our proof uses the fact that $p_{g,n,k}$,
  the number of $M\in[G_n,G_{n+1})$ with exactly $k$ gaps of size $g$,
  satisfies a homogenous two-dimensional recursion
  (see \S\ref{sec:counting-gaps}).
  We then prove that under certain conditions,
  the distributions given by the ``rows'' $\{p_{g,n,k}\}_{k\ge0}$ of these
  two-dimensional homogenous recursions converge to a Gaussian
  (see \S\ref{sec:technical-lemmas}).
  Our proof depends primarily on the recurrence relation, and we only need the initial conditions to be nice enough to avoid edge cases.
  This result should not be surprising,
  as a specific case is the two dimensional recurrence
  $a_{n,k} = a_{n-1,k} + a_{n-1,k-1}$.
  This recurrence produces the binomials $\binom{n}{k}$ (or sums of them), and the distributions formed by binomials $\binom{n}{\cdot}$ are well known to converge to a normal distribution.

  Similar to the work of Miller and Wang \cite{MW1,MW2}, we use the method of moments to
  prove that our random variables converge to Gaussians.
  More precisely, we prove that the moments of
  the $n$\textsuperscript{th} random variable $K_{g,n}$ (or $K_{\Sigma,n}$),
  $\tilde\mu_n(m)$, satisfy
  \begin{equation}
    \lim_{n\to\infty}\frac{\tilde\mu_n(2m)}{\tilde\mu_n(2)^{m}} \ = \  (2m-1)!!
    \quad \text{and} \quad
    \lim_{n\to\infty}\frac{\tilde\mu_n(2m+1)}{\tilde\mu_n(2)^{m+\half}} \ = \  0.
  \end{equation}
  While Miller and Wang use generating functions to directly compute
  the moments $\tilde\mu_n(m)$, we instead compute them recursively (see, for example, Theorem
  \ref{thm:tilde-mu-recursion}), which leads to a cleaner computation and could be of use in other investigations.

\section{Preliminaries}

We first collect some notation we use throughout the paper, then isolate two technical lemmas on convergence, and then apply these to prove Gaussian behavior for certain two dimensional recurrences. This final result is the basis for the proof of our main result on Gaussian behavior of gaps for a fixed $g$, Theorem \ref{thm:gap-gauss}.


\subsection{Notation}
  \label{sec:notation}
    For this paper, all big-Os are taken as $n\to\infty$ unless otherwise specified.

    For a polynomial $A(x) = \sum_{k=0}^d a_kx^k$,
    let $\ba{x^k}(A(x))=a_k$ be the notation
    for extracting the $k$\textsuperscript{th} coefficient of $A$.

    For a real number $\lambda_1>0$, a polynomial $A(x)$ has the
    \textit{maximum root property with maximum root $\lambda_1$}
    if $\lambda_1$ is a root of $A$ with multiplicity 1 and all other roots have
    magnitude strictly less than $\lambda_1$.

    A sequence of real numbers $\{a_n\}$
    \textit{converges exponentially quickly to $a$}
    if $\lim_{n\to\infty}a_n=a$
    and there exists $\gamma\in(0,1)$
    and a constant $C$ such that
    $\abs{a-a_n}\le C\gamma^n$ for all $n$
    (alternatively, $a_n = a + O(\gamma^n)$).

    Let $d$ be a fixed positive integer, and
    let $\{A_n(x)\}$ be a sequence of degree-$d$ polynomials
    where $A_n(x) = \sum_{j=0}^d a_{j,n}x^j$.
    We say $\{A_n(x)\}$
    \textit{converges exponentially quickly to
    $\bar A(x)=\sum_{j=0}^d\bar a_jx^j$}
    if $\{a_{j,n}\}_{n\in\NN}$ converges
    exponentially quickly to $\bar a_j$ for $j=0,1,\dots,d$.

\ \\

From the above definitions we immediately obtain the following useful result.

  \begin{lemma}
    \label{fact:exponential-convergence-arithmetic}
    Let $\{a_n\},\{b_n\}$ be sequences
    that converge exponentially quickly to
    $a$ and $b$ respectively. Then
    \begin{enumerate}
      \ii $\{a_n+b_n\}$ converges exponentially
      quickly to $a+b$,
      \ii $\{a_n-b_n\}$ converges exponentially
      quickly to $a-b$,
      \ii $\{a_n\cdot b_n\}$ converges exponentially
      quickly to $a\cdot b$,
      \ii if $b_n\neq0$ for all $n$
      and $b\neq0$,
      then $\{a_n/ b_n\}$ converges exponentially
      quickly to $a/b$.
    \end{enumerate}
  \end{lemma}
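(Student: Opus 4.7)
The plan is to unpack the definition of exponential convergence and combine exponentially decaying error bounds via the triangle inequality. Write $|a_n - a| \le C_a \gamma_a^n$ and $|b_n - b| \le C_b \gamma_b^n$ with $\gamma_a, \gamma_b \in (0,1)$, and set $\gamma \defeq \max(\gamma_a, \gamma_b) \in (0,1)$ throughout; then any estimate expressed in terms of $\gamma_a^n$ and $\gamma_b^n$ is automatically $O(\gamma^n)$.

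For parts (1) and (2), the triangle inequality gives $|(a_n \pm b_n) - (a \pm b)| \le |a_n - a| + |b_n - b| \le (C_a + C_b)\gamma^n$, which is the desired bound. For part (3) I would use the standard add-and-subtract decomposition
\be
  a_n b_n - ab \;=\; (a_n - a)b_n + a(b_n - b),
\ee
bound $|b_n|$ uniformly by some $M$ (possible since an exponentially convergent sequence is in particular convergent, hence bounded), and conclude $|a_n b_n - ab| \le M C_a \gamma^n + |a| C_b \gamma^n = O(\gamma^n)$.

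Part (4) is the one that requires a little care and is where I expect the main (mild) obstacle to lie: we need to ensure the denominators $b_n$ do not spoil the bound. Since $b_n \to b \ne 0$, there is some $N_0$ such that $|b_n| \ge |b|/2$ for all $n \ge N_0$. For such $n$, write
\be
  \frac{a_n}{b_n} - \frac{a}{b} \;=\; \frac{(a_n - a)b - a(b_n - b)}{b_n\, b},
\ee
whose numerator is bounded by $(|b| C_a + |a| C_b)\gamma^n$ and whose denominator satisfies $|b_n b| \ge b^2/2$, yielding an $O(\gamma^n)$ bound. For the finitely many $n < N_0$ the hypothesis $b_n \ne 0$ makes $|a_n/b_n - a/b|$ a finite quantity, so we can enlarge the constant $C$ (keeping the same $\gamma$) to absorb these initial terms, say by taking $C' \defeq \max(C, \max_{n<N_0}|a_n/b_n - a/b|\,\gamma^{-n})$. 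This establishes exponential convergence of $\{a_n/b_n\}$ to $a/b$ and completes the proof.
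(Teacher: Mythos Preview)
Your proof is correct and is exactly the standard elementary argument one would expect. The paper itself does not give a proof of this lemma at all: it simply states that the result follows immediately from the definitions, so your write-up is more detailed than what appears in the paper.
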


Appendix A of \cite{BBGILMT2013} provides the following useful results.
  \begin{theorem}[Generalized Binet's Formula]
    \label{thm:general-binet}
    Consider any linear recurrence of real numbers (not necessarily a positive linear recurrence)
    \begin{equation}
      G_n\ = \ c_1G_{n-1}+\cdots+c_LG_{n-L}
    \end{equation}
    with arbitrary initial conditions.
    Suppose the characteristic polynomial $x^L-(c_1x^{L-1}+c_2x^{L-2}+\cdots+c_L)$
    has the maximum root property with some maximum root $\lambda_1>0$.
    Then there exists a constant $a_1$ such that
    $G_n = a_1\lambda_1^n+O(n^{L-2}\lambda_2^n)$ where $|\lambda_2| < \lambda_1$ is the second largest root in absolute value.
    Additionally if $a_1$ is positive (that is, $G_n=\Theta(\lambda_1^n)$) then for every fixed positive integer $i$,
    $G_{n-i}/G_n$ converges to $1/\lambda_1^i$ exponentially quickly as $n\to\infty$.
  \end{theorem}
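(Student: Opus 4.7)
The plan is to invoke the standard closed-form solution for linear recurrences over $\CC$ and then read off the two claims. Let $\lambda_1,\lambda_2,\dots,\lambda_k$ be the distinct roots of the characteristic polynomial $x^L-c_1x^{L-1}-\cdots-c_L$, with multiplicities $m_1,\dots,m_k$ satisfying $\sum_j m_j = L$. The maximum root property forces $m_1=1$ and $|\lambda_j|\le|\lambda_2|<\lambda_1$ for $j\ge 2$. By the general theory of linear recurrences (derivable, for example, by solving the Vandermonde-type system that matches arbitrary initial conditions, or by a partial-fraction decomposition of the generating function $\sum_n G_n z^n$), there exist a constant $a_1\in\CC$ and polynomials $p_j(n)\in\CC[n]$ with $\deg p_j \le m_j - 1$ for $j\ge 2$ such that
\begin{equation}
G_n \ =\ a_1\lambda_1^n + \sum_{j=2}^{k} p_j(n)\lambda_j^n.
\end{equation}
Here the constants $a_1$ and the coefficients of the $p_j$'s are determined linearly by the initial conditions $G_1,\dots,G_L$.

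For the first claim, I would bound each term of the sum above. Since $m_1 = 1$ we have $\sum_{j\ge 2} m_j = L-1$, so each $p_j$ (for $j\ge 2$) has degree at most $L-2$; hence $|p_j(n)| = O(n^{L-2})$. Combined with $|\lambda_j|\le|\lambda_2|$, this gives $|p_j(n)\lambda_j^n| = O(n^{L-2}|\lambda_2|^n)$ for each $j\ge 2$, and summing the finitely many terms preserves the bound, yielding $G_n = a_1\lambda_1^n + O(n^{L-2}\lambda_2^n)$.

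For the ratio claim, assume $a_1>0$, so that $G_n = a_1\lambda_1^n(1 + O((n^{L-2}/a_1)(|\lambda_2|/\lambda_1)^n))$. In particular $G_n = \Theta(\lambda_1^n)$ for large $n$. For any fixed positive integer $i$,
\begin{equation}
\frac{G_{n-i}}{G_n} \ =\ \frac{a_1\lambda_1^{n-i} + O(n^{L-2}\lambda_2^{n-i})}{a_1\lambda_1^n + O(n^{L-2}\lambda_2^n)} \ =\ \frac{1}{\lambda_1^i}\cdot\frac{1 + O\bigl(n^{L-2}(|\lambda_2|/\lambda_1)^n\bigr)}{1 + O\bigl(n^{L-2}(|\lambda_2|/\lambda_1)^n\bigr)}.
\end{equation}
Since $|\lambda_2|/\lambda_1 < 1$, the factor $n^{L-2}(|\lambda_2|/\lambda_1)^n$ decays exponentially fast in $n$. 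Choosing any $\gamma$ with $|\lambda_2|/\lambda_1 < \gamma < 1$ absorbs the polynomial prefactor and gives $G_{n-i}/G_n = \lambda_1^{-i} + O(\gamma^n)$, i.e.\ exponential convergence in the sense defined in \S\ref{sec:notation}.

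The only real obstacle is justifying the closed-form expansion when the characteristic polynomial has repeated roots (i.e.\ some $m_j>1$ for $j\ge 2$); once that expansion is in hand, both parts of the theorem reduce to elementary magnitude estimates. I would handle this either by the standard generating-function/partial-fractions argument, or by noting that for any root $\lambda$ of multiplicity $m$ the sequences $n^t\lambda^n$ ($0\le t < m$) are solutions of the recurrence, giving $L$ linearly independent solutions that must span the $L$-dimensional solution space determined by the initial conditions.
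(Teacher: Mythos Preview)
The paper does not supply its own proof of this theorem; it simply cites Appendix~A of \cite{BBGILMT2013} for the result. Your argument is the standard one and is correct: write the general solution of the recurrence as a sum over the distinct roots with polynomial-in-$n$ coefficients whose degrees are bounded by the multiplicities minus one, use $m_1=1$ to force the leading term to be a pure exponential, and bound the remaining terms by $O(n^{L-2}|\lambda_2|^n)$. The ratio claim then follows by the elementary manipulation you give, absorbing the polynomial prefactor into a slightly larger base $\gamma\in(|\lambda_2|/\lambda_1,1)$. One small cosmetic point: since the coefficients $c_i$ and the $G_n$ are real and $\lambda_1$ is a simple real root, $a_1$ is automatically real, so you need not start with $a_1\in\CC$.
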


  \begin{theorem}
    \label{thm:plrs-maximum-root}
    \label{lem:plrs-ratio-convergence}
    Consider a PLRS $\{G_n\}$ given by
    \begin{equation}
      G_n\ = \ c_1G_{n-1}+\cdots+c_LG_{n-L}.
    \end{equation}
    Then the characteristic polynomial
    $x^L-(c_1x^{L-1}+c_2x^{L-2}+\cdots+c_L)$
    has the maximum root property with maximum root $\lambda_1>1$
    and $G_n=\Theta(\lambda_1^n)$. In other words, the coefficient $a_1$ given by Theorem \ref{thm:general-binet} is positive.
  \end{theorem}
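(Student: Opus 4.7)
The plan is to prove three things in order: the characteristic polynomial $p(x) = x^L - c_1 x^{L-1} - \cdots - c_L$ has a unique positive real root $\lambda_1$ which is simple and satisfies $\lambda_1 > 1$; no other complex root has modulus $\geq \lambda_1$; and $G_n = \Theta(\lambda_1^n)$, which via Theorem \ref{thm:general-binet} forces $a_1 > 0$. For the first, I would study the auxiliary function $f(x) = p(x)/x^L = 1 - \sum_{i=1}^L c_i/x^i$ on $(0, \infty)$, whose derivative $\sum_i i c_i/x^{i+1}$ is strictly positive while $f(0^+) = -\infty$ and $f(\infty) = 1$. This delivers a unique positive zero $\lambda_1$, which is a simple root of $p$ (since $p'(\lambda_1) = \lambda_1^L f'(\lambda_1) > 0$), and monotonicity on $(\lambda_1, \infty)$ rules out larger positive real roots. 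The bound $\lambda_1 > 1$ follows from $p(1) = 1 - \sum c_i \leq 1 - c_1 - c_L \leq -1$ in the non-degenerate regime $L \geq 2$; the case $L = 1$, $c_1 = 1$ gives the trivial constant sequence $G_n \equiv 1$ and is implicitly excluded.

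For the dominance claim, let $\lambda = \lambda_1 e^{i\theta}$ be any root of modulus $\lambda_1$. The characteristic equation combined with the triangle inequality gives $\lambda_1^L = \bigl|\sum_i c_i \lambda^{L-i}\bigr| \leq \sum_i c_i \lambda_1^{L-i} = \lambda_1^L$, so equality must hold in the triangle inequality. This forces all nonzero terms $c_i \lambda^{L-i}$ to share a common argument; since $c_L \lambda^0 = c_L$ is positive real the common argument is $0$, which yields $(L-1)\theta \equiv 0$ from $c_1 \lambda^{L-1}$ and $L\theta \equiv 0$ from $\lambda^L$, both modulo $2\pi$. Subtracting gives $\theta \equiv 0 \pmod{2\pi}$ and hence $\lambda = \lambda_1$. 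Roots of modulus strictly exceeding $\lambda_1$ are ruled out because $p(|\lambda|) > 0$ (by the monotonicity of $f$) contradicts the triangle-inequality bound $|\lambda|^L \leq \sum c_i |\lambda|^{L-i}$.

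For the growth rate I would renormalize $\bar G_n := G_n/\lambda_1^n$. Dividing the recurrence by $\lambda_1^n$ gives $\bar G_n = \sum_{i=1}^L (c_i/\lambda_1^i) \bar G_{n-i}$; the weights $c_i/\lambda_1^i$ are non-negative and sum to $1$ by the characteristic equation, so each $\bar G_n$ is a convex combination of $\bar G_{n-1}, \ldots, \bar G_{n-L}$. Consequently the sliding-window minimum $m_n = \min_{n-L+1 \leq j \leq n} \bar G_j$ is non-decreasing and the corresponding maximum $M_n$ is non-increasing. Both are trapped between positive constants determined by $G_1, \ldots, G_L$ (each $\geq 1$ by the boundary conditions), pinning $\bar G_n$ between two positive constants and proving $G_n = \Theta(\lambda_1^n)$. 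Invoking Theorem \ref{thm:general-binet} then identifies the limit $a_1 = \lim G_n/\lambda_1^n$ as strictly positive.

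I expect the dominance argument to be the most delicate step. The equality condition in the triangle inequality must be exploited using only the sparse information that $c_1$ and $c_L$ are positive, rather than assuming all intermediate $c_i$ are positive. The fact that these two nonzero coefficients multiply $\lambda^{L-1}$ and $\lambda^0$, whose exponents differ by $L-1$, is exactly what makes the cancellation $L\theta - (L-1)\theta = \theta$ force $\theta \equiv 0$; without both positive endpoints this short argument would collapse. Everything else is routine verification given the characteristic-equation reformulation.
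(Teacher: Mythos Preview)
The paper does not actually prove this theorem: it simply cites Appendix~A of \cite{BBGILMT2013} for both Theorem~\ref{thm:general-binet} and Theorem~\ref{thm:plrs-maximum-root}. So there is no ``paper's own proof'' to compare against, and your self-contained argument is a genuine addition.

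Your proof is correct in all three parts. The monotonicity argument for the unique simple positive root via $f(x) = 1 - \sum_i c_i x^{-i}$ is standard and clean; your handling of the degenerate case $L=1$, $c_1=1$ is appropriate (the theorem as stated fails there, and the paper tacitly excludes it). The equality-case analysis of the triangle inequality is the right approach for the dominance claim, and your observation that only $c_1>0$ and $c_L>0$ are needed---yielding the two congruences $(L-1)\theta\equiv 0$ and $L\theta\equiv 0$ whose difference pins down $\theta$---is exactly the point. The convex-combination argument for $G_n = \Theta(\lambda_1^n)$ via the sliding-window min and max is correct and slightly more elementary than what one typically sees (many references instead appeal directly to Perron--Frobenius for the companion matrix, or compute $a_1$ explicitly from the initial conditions); your route avoids any matrix machinery and uses only the positivity of the initial values $G_1,\dots,G_L\ge 1$.
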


Note that in Theorem \ref{thm:general-binet}, $a_1$ is positive except for particular choices of initial conditions. For example, if $G_n = 5G_{n-1} - 6G_{n-2}$, we have $G_n = a_13^n + O(2^n)$ unless we have initial conditions $G_1 = \alpha$, $G_2 = 2\alpha$, in which case the $3^n$ term vanishes.



\subsection{Convergence on non-homogenous linear recurrences with noise}\label{sec:technical-lemmas}

The following two lemmas follow immediately from the previous definitions and
book-keeping, and play a key role in the convergence analysis later. In particular, these two lemmas allow us to pin down the exact behavior of the moments of our random variables $K_{g,n}$ as we prove convergence to the standard normal (see Lemmas \ref{lem:mu-calculation} and \ref{lem:moments-calculation}).

    \begin{lemma}
      \label{lem:key-technical}
      Let $i_0$ be a positive integer.
      Let $\{r_{n}\}_{n\in\NN}$ be a sequence of real numbers and for each $1\le i\le i_0$ let $\{s_{i,n}\}_{n\in\NN}$ be a sequence of non-negative real numbers such that $\sum_{i=1}^{i_0} s_{i,n}=1$ for all $n$.
      With slight abuse of notation, suppose also that there exist constants $\bar r$ and $\bar s_i$ for $1\le i\le i_0$, along with $\gamma_r,\gamma_s\in(0,1)$ such that
      \begin{align}
        r_{n} \ = \  \bar r + O(\gamma_r^n), &\qquad s_{i,n} \ = \  \bar s_i + O(\gamma_s^n).
      \end{align}
      Suppose further that the polynomial
      \begin{equation}
        \label{eq:ci-characteristic-polyn}
        S(x) \ = \  x^{i_0} - \sum_{i=1}^{i_0} \bar s_i x^{i_0-i}
      \end{equation}
      has the maximum root property with maximum root 1.
      Let $\{a_n\}_{n\ge n_0}$ be a sequence with arbitrary initial conditions $a_{n_0},\dots,a_{n_0+i_0-1}$ and, for $n \ge n_0 + i_0$,
      \begin{equation}
        a_n \ = \  \pa{\sum_{i=1}^{i_0}s_{i,n}a_{n-i}}+r_{n}.
      \end{equation}
      Then there exists a positive integer $d$ and a real number $\gamma\in(0,1)$ such that
      \begin{equation}
        a_n \ = \  \frac{\bar r}{\sum_{i=1}^{i_0}i\cdot\bar s_i}\cdot n+d+O(\gamma^n).
      \end{equation}
    \end{lemma}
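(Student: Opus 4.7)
The plan is to reduce the inhomogeneous recurrence with noisy coefficients to a constant-coefficient homogeneous recurrence driven by exponentially decaying noise. First, observe that the only way $a_n$ can have linear growth is if that growth is forced by the constant term $\bar r$; the natural guess for the slope is $c = \bar r / \sum_i i \bar s_i$, motivated by plugging $a_n = c n$ into the idealized recurrence $a_n = \sum_i \bar s_i a_{n-i} + \bar r$, which yields $c n = c n \sum_i \bar s_i - c \sum_i i \bar s_i + \bar r = c n - c \sum_i i \bar s_i + \bar r$. I would therefore set $b_n = a_n - c n$ and compute
\begin{equation}
b_n \ = \ \sum_{i=1}^{i_0} s_{i,n} b_{n-i} + \tilde r_n, \qquad \tilde r_n \ = \ r_n - c \sum_{i=1}^{i_0} i \, s_{i,n}.
\end{equation}
Since $r_n$ and each $s_{i,n}$ converge exponentially quickly to their limits, and since the limits satisfy $\bar r = c \sum_i i \bar s_i$, Lemma \ref{fact:exponential-convergence-arithmetic} gives $\tilde r_n = O(\gamma_1^n)$ for some $\gamma_1 \in (0,1)$.

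The second step is an a priori bound $b_n = O(1)$. Because $\{s_{i,n}\}_i$ is a probability distribution for each $n$, setting $M_n = \max_{n_0 \le m \le n} |b_m|$ gives
\begin{equation}
|b_n| \ \le \ \sum_{i=1}^{i_0} s_{i,n} |b_{n-i}| + |\tilde r_n| \ \le \ M_{n-1} + |\tilde r_n|,
\end{equation}
so $M_n \le M_{n_0+i_0-1} + \sum_{m \le n}|\tilde r_m|$, and the right side is bounded since $\tilde r_m$ decays geometrically. Writing $\delta_{i,n} = s_{i,n} - \bar s_i = O(\gamma_s^n)$, this boundedness lets me absorb the coefficient perturbation into the noise:
\begin{equation}
b_n \ = \ \sum_{i=1}^{i_0} \bar s_i \, b_{n-i} + E_n, \qquad E_n \ = \ \tilde r_n + \sum_{i=1}^{i_0} \delta_{i,n} b_{n-i} \ = \ O(\gamma_2^n)
\end{equation}
for some $\gamma_2 \in (0,1)$. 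Thus $b_n$ solves a constant-coefficient linear recurrence driven by exponentially small noise.

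For the final step I invoke the Generalized Binet's Formula (Theorem \ref{thm:general-binet}) for the characteristic polynomial $S(x)$, which by hypothesis has $1$ as a simple maximum root and every other root of strictly smaller magnitude. The homogeneous solutions therefore take the form $d + O(\mu^n)$ with $\mu < 1$. A particular solution driven by $E_n = O(\gamma_2^n)$ can be constructed explicitly, e.g.\ via the generating function $B(x) = \sum_n b_n x^n$: the identity $S^*(x) B(x) = P(x) + \tilde E(x)$, where $S^*(x) = 1 - \sum_i \bar s_i x^i$ is the reciprocal polynomial of $S$ and $P$ captures the initial conditions, shows that $B(x)$ has a simple pole at $x=1$ and all other singularities strictly outside the unit disk (since they come from the roots of $S$ of magnitude $<1$, reciprocated, and from $E_n$'s radius of convergence $>1$). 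Partial fractions then yield $b_n = d + O(\gamma^n)$ for some $\gamma \in (0,1)$. Adding back $c n$ gives the desired conclusion $a_n = c n + d + O(\gamma^n)$.

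The main obstacle is the last step: one must argue that the exponentially small forcing $E_n$ does not excite the marginal mode at $\lambda = 1$ in a way that produces a second linear term, which would clash with the uniqueness of the coefficient $c$. This is the reason the generating-function argument (equivalently, careful variation-of-parameters using the Binet formula) is preferable to a purely recursive estimate: it cleanly exhibits that the pole at $x=1$ contributes only a constant and that all other contributions are genuinely geometric.
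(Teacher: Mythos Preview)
Your argument is correct and tracks the paper's proof almost exactly through the first four steps: the same linear subtraction $b_n=a_n-cn$, the same exponential vanishing of the new inhomogeneity, the same convexity-based boundedness argument, and the same absorption of the coefficient perturbations $\delta_{i,n}$ into an $O(\gamma_2^n)$ forcing term. The divergence is only in the last step. The paper handles the constant-coefficient recurrence $b_n=\sum_i \bar s_i b_{n-i}+f(n)$ by an explicit superposition: it writes $b_n=b_n^{(\mathrm{init})}+\sum_{m\ge i_0} b_n^{(m)}$, where $b^{(\mathrm{init})}$ is the homogeneous solution matching the initial data and each $b^{(m)}$ is the impulse response to the single kick $f(m)$ at time $m$. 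Since all $b^{(m)}$ are shifted, rescaled copies of one fixed sequence, Binet's formula bounds each error by $\alpha\,|f(m)|\,\gamma_1^{\,n-m}$, and the double geometric sum gives the $O(\gamma_1^n)$ conclusion. Your generating-function argument reaches the same endpoint by locating the singularities of $B(x)=(P(x)+\tilde E(x))/S^*(x)$: the simple zero of $S^*$ at $x=1$ gives a simple pole contributing the constant $d$, and everything else lives outside the closed unit disk, so the remainder decays geometrically by Cauchy's estimates. Your approach is cleaner conceptually and dispatches in one stroke your own worry about ``exciting the marginal mode''; the paper's approach is more elementary in that it avoids any analyticity considerations and stays entirely within real recurrences, at the cost of more bookkeeping.
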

    Roughly speaking, Lemma \ref{lem:key-technical} is true because, modulo exponentially small terms, every $a_n$ is a constant plus the weighted average of previous $a_{n-i}$s, so it should be linear in $n$.
    \begin{proof}
      It suffices to prove the lemma for $n_0=0$.
      Let $b_n = a_n - \frac{\bar r}{\sum_{i=1}^{i_0}i\cdot\bar s_i}\cdot n$.
      Set $\gamma=\max(\gamma_r,\gamma_s)$.
      Simple manipulations yield
      \begin{align}
        b_n
          &\ = \  \sum_{i=1}^{i_0} s_{i,n}b_{n-i}
          + r_n - \frac{\sum_{i=1}^{i_0}i\cdot s_{i,n}}
                       {\sum_{i=1}^{i_0} i\cdot\bar s_i}\cdot \bar r \nonumber\\
          &\ = \  \sum_{i=1}^{i_0} s_{i,n}b_{n-i}
            + \bar r \cdot
            \pa{\frac{r_n}{\bar r}
              - \frac{\sum_{i=1}^{i_0}i\cdot\bar s_{i,n}}{\sum_{i=1}^{i_0} i\cdot\bar s_i}}
                        \nonumber\\
          &\ = \  \sum_{i=1}^{i_0} s_{i,n}b_{n-i}
            + \bar r \cdot \pa{(1+O(\gamma^n)) - (1+O(\gamma^n))} \nonumber\\
          &\ = \  \sum_{i=1}^{i_0} s_{i,n}b_{n-i} + O(\gamma^n).
      \end{align}

      We finish by showing that the sequence $b_n$ converges
      exponentially quickly to a constant.
      Simple algebra yields that $b_n$ is bounded (see Appendix \ref{app:bn-bound}).
      Let $M$ be an integer such that $|b_n|\le M$ for all $n$.
      Then
      \begin{align}
        b_n-\sum_{i=1}^{i_0}\bar s_ib_{n-i}
          &\ = \  b_n-\sum_{i=1}^{i_0}s_{i,n}b_{n-i}
            + \sum_{i=1}^{i_0}(s_{i,n}-\bar s_i)b_{n-i} \nonumber\\
          &\ \le \  O(\gamma^n) + \sum_{i=1}^{i_0}O(\gamma^n)\cdot b_{n-i} \nonumber\\
          &\ \le \  O(\gamma^n) + \sum_{i=1}^{i_0}O(\gamma^n)\cdot M \ = \  O(\gamma^n).
      \end{align}
      Thus we can write
      \begin{equation}
        b_n\ = \ \pa{\sum_{i=1}^{i_0}\bar s_ib_{n-i}} + f(n)
      \end{equation}
      for some function $f:\{i_0,i_0+1,\dots\}\to\RR$
      such that $f(n) = O(\gamma^n)$ as $n\to\infty$.
      Let $\alpha_f>0$ be a constant such that $|f(n)|\le \alpha_f\gamma^n$.

      From here, the intuition for the finish is as follows.
      If $f(n)=0$ for all $n$, then 
      Theorem \ref{thm:general-binet}
      implies that $b_n$ approaches a constant exponentially quickly.
      However, since $\gamma<1$,
      we have that $b_n$ should still approach
      a constant exponentially quickly when $f(n)=O(\gamma^n)$.

      Let $\{b\id{\init}_n\}_{n\in\NN},
      \{b\id{i_0}_n\}_{n\in\NN}, \{b\id{i_0+1}_n\}_{n\in\NN}, \dots$
      be sequences defined (for $m\ge i_0$) by
      \begin{align}
        b\id{\init}_n &\ = \
          \left\{
            \begin{array}{cl}
              b_n& 0\le n\le i_0-1\\
              \sum_{i=1}^{i_0}\bar s_i b\id{\init}_{n-i} & n>i_0\\
            \end{array}
          \right. \nonumber\\
        b\id m_n &\ = \
          \left\{
            \begin{array}{cl}
              0& n<m\\
              f(m)& n=m\\
              \sum_{i=1}^{i_0}\bar s_i b\id m_{n-i} & n>m.\\
            \end{array}
          \right.
      \end{align}
      By induction, we can verify that
      \begin{equation}
        b_n \ = \  b\id{\init}_n+\sum_{m=i_0}^\infty b\id{m}_n
      \end{equation}
      for all $n$ (see Appendix \ref{app:bn-decomposition}).
      By the restrictions of $s_i$, the characteristic
      polynomials of $\{b\id{\init}_n\}$ and $\{b\id{m}_n\}$
      are equal to $S(x)$ in \eqref{eq:ci-characteristic-polyn}
      and thus have the maximum root property with maximum root 1.
      Hence, by generalized Binet's formula $\{b\id{\init}_n\}$ and $\{b\id{m}_n\}$ all converge to a constant.
      Suppose that $\{b\id{\init}_n\}$ converges to $\bar b\id{\init}$
      and $\{b\id{m}_n\}$ converges to $\bar b\id{m}$ for each $m\ge i_0$.
      Let $\lambda_2<1$ be the second largest magnitude of a root of $S(x)$.
      Choose $\gamma_1\in(\max(\gamma,\lambda_2),1)$.
      By the generalized Binet's formula
       \begin{equation}
        b_n\id{\init}-\bar b\id{\init} \ = \  O(n^{i_0}\cdot\lambda_2^n) \ \le \  O(\gamma_1^n),
      \end{equation}
      so there exists $\alpha_{b}\id1$ such that
      \begin{equation}
        \abs{b_n\id{\init}-\bar b\id{\init}} \ \le \  \alpha_b\id1\gamma_1^n.
      \end{equation}
      For all $m$, we can bound $b\id m_n$ similarly.
      However, note that for all $m$,
      $\{b\id m_n/f(m)\}_{n\in\NN}$ is
      the same sequence with the indices shifted.
      Thus there exists $\alpha_b\id2$ such that
      \begin{equation}
        \abs{b_n\id{m}-\bar b\id{m}}
          \ \le \  \alpha_b\id2f(m)\gamma_1^{n-m}
          \ \le \ \alpha_b\id2\alpha_f\gamma^m\gamma_1^{n-m}.
      \end{equation}
      Set $\alpha_b=\max(\alpha_b\id1,\alpha_b\id2)$. Then
      \begin{align}
        \abs{b_n - b}
          &\ \le \
          \abs{b\id{\init}-b\id{\init}_n}+\sum_{m=i_0}^{\infty}\abs{b\id m-b\id m_n}
          \nonumber\\
          &\ \le \  \alpha_b\gamma_1^n
            + \sum_{m=i_0}^\infty\alpha_b\alpha_f\gamma_1^n\pa{\frac{\gamma}{\gamma_1}}^m
            \nonumber\\
          &\ \le \  \gamma_1^n\pa{\alpha_b
            + \alpha_b\alpha_f\cdot\pa{\frac{\gamma}{\gamma_1}}^{i_0}
                \cdot\frac{1}{1-\frac{\gamma}{\gamma_1}}} \ = \  O(\gamma_1^n)
      \end{align}
      as desired.
    \end{proof}

    The next lemma generalizes Lemma \ref{lem:key-technical}.
    \begin{lemma}
      \label{lem:key-technical-polyn}
      Let $D$ be a nonnegative integer and let $i_0$ be a positive integer.
      Let $\{R_{n}(x)\}_{n\in\NN}$ be a sequence of $D$ degree polynomials
      with $R_n(x) = \sum_{j=0}^Dr_{j,n}x^j$.
      For each $1\le i\le i_0$
      let $\{s_{i,n}\}_{n\in\NN}$ be a sequence of non-negative
      real numbers such that $\sum_{i=1}^{i_0} s_{i,n}=1$ for all $n$.
      Suppose also that there exist a polynomial
      $\bar R(x)=\bar r_Dx^D+\bar r_{D-1}x^{D-1}+\cdots+\bar r_0$
      and real numbers $\bar s_i$ for $1\le i\le i_0$,
      along with $\gamma_r,\gamma_s\in(0,1)$, such that,
      for all $0\le j\le D$ and $1\le i\le i_0$,
      \begin{align}
        r_{j,n} \ = \  \bar r_j + O(\gamma_r^n),\qquad
        s_{i,n} \ = \  \bar s_i + O(\gamma_s^n).
      \end{align}
      Suppose further that the polynomial,
      $S(x) = x^{i_0} - \sum_{i=1}^{i_0} \bar s_i x^{i_0-i}$
      has the maximum root property with maximum root 1.
      Let $\{a_n\}_{n\ge n_0}$ be a sequence with arbitrary initial conditions
      $a_{n_0}$, $\dots$, $a_{n_0+i_0-1}$, and for $n \ge n_0 + i_0$,
      \begin{equation}
        \label{eq:key-technical-polyn-an}
        a_n \ = \  \pa{\sum_{i=1}^{i_0}s_{i,n}a_{n-i}}+R_{n}(n).
      \end{equation}
      Then there exists a degree $D+1$ polynomial $Q(x)$ and a
      $\gamma_1\in (0,1)$ such that
      \begin{equation}
        a_n \ = \  Q(n) + O(\gamma_1^n)
      \end{equation}
      where
      \begin{equation}
        \ba{x^{D+1}}(Q(x)) \ = \  \frac{\bar r_D}{(D+1)\sum_{i=1}^{i_0} i\cdot\bar s_i}.
      \end{equation}
    \end{lemma}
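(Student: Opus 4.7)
The plan is to peel off the polynomial part of $a_n$ explicitly, reducing the problem to the base case handled by Lemma \ref{lem:key-technical}. I will construct a polynomial $Q(x) = \sum_{\ell=0}^{D+1} q_\ell x^\ell$ of degree $D+1$ so that, on substituting $a_n = Q(n) + e_n$ into \eqref{eq:key-technical-polyn-an}, the error $e_n$ satisfies
\be
  e_n \ = \ \sum_{i=1}^{i_0} s_{i,n}\, e_{n-i} + F_n, \qquad F_n \ \defeq \ R_n(n) + \sum_{i=1}^{i_0} s_{i,n}\, Q(n-i) - Q(n),
\ee
with forcing $F_n$ exponentially small in $n$. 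Note that $q_0$ drops out of $F_n$ because $\sum_i s_{i,n} = 1$, so $q_0$ will be left as a free parameter until the very end. Once $F_n = O((\gamma')^n)$ for some $\gamma' \in (0,1)$, Lemma \ref{lem:key-technical} applied to $\{e_n\}$ (with limiting constant $\bar r = 0$) yields $e_n = d_0 + O(\gamma_1^n)$ for some $d_0$ and $\gamma_1 \in (0,1)$; absorbing $d_0$ into $q_0$ then delivers $a_n = Q(n) + O(\gamma_1^n)$.

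To pin down $q_1, \dots, q_{D+1}$, write $m_k \defeq \sum_{i=1}^{i_0} i^k \bar s_i$ and $M_{k,n} \defeq \sum_{i=1}^{i_0} i^k s_{i,n}$, so that $M_{k,n} = m_k + O(\gamma_s^n)$ by Lemma \ref{fact:exponential-convergence-arithmetic}. Since $\sum \bar s_i = 1$ and each $i \ge 1$, we have $m_1 \ge 1 > 0$. Expanding $Q(n-i)$ by the binomial theorem and collecting powers of $n$, the coefficient of $n^\ell$ (for $0 \le \ell \le D$) in $F_n$ equals
\be
  r_{\ell,n} + \sum_{k=1}^{D+1-\ell} q_{\ell+k}\binom{\ell+k}{k}(-1)^k M_{k,n}.
\ee
Requiring that each limiting coefficient vanish produces a triangular linear system, solved by back-substitution from $\ell = D$ downward. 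The $\ell = D$ equation reads $\bar r_D - (D+1) q_{D+1} m_1 = 0$ and fixes
\be
  q_{D+1} \ = \ \frac{\bar r_D}{(D+1)\sum_{i=1}^{i_0} i \bar s_i},
\ee
matching the claimed leading coefficient. At each subsequent level $\ell$ the unknown $q_{\ell+1}$ appears with nonzero coefficient $-(\ell+1) m_1$, so the system determines $q_{\ell+1}$ uniquely in terms of $q_{\ell+2}, \dots, q_{D+1}$; iterating down to $\ell = 0$ fixes all of $q_1, \dots, q_{D+1}$.

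With these $q_\ell$'s in place, every coefficient of $F_n$ in powers of $n$ is itself $O(\gamma^n)$ for $\gamma \defeq \max(\gamma_r, \gamma_s)$. Since $F_n$ has degree at most $D$ in $n$, it follows that $F_n = O(n^D \gamma^n) = O((\gamma')^n)$ for any fixed $\gamma' \in (\gamma, 1)$, which is the form needed to invoke Lemma \ref{lem:key-technical}. The main obstacle is bookkeeping rather than anything conceptual: one must carefully track how the exponential convergence of $s_{i,n}$ and $r_{j,n}$ propagates through the binomial expansion of $Q(n-i)$ and through the triangular linear system, using Lemma \ref{fact:exponential-convergence-arithmetic} to combine sums, products, and quotients of exponentially convergent sequences. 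The positivity $m_1 \ge 1$, which follows from $\bar s_i \ge 0$, $\sum \bar s_i = 1$, and $i \ge 1$, is essential both for defining $q_{D+1}$ and for ensuring the back-substitution proceeds without division by zero.
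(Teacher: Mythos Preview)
Your proposal is correct. Both you and the paper reduce to Lemma~\ref{lem:key-technical} by subtracting off the polynomial part of $a_n$, but the execution differs: the paper proceeds by induction on $D$, subtracting only the top-degree term $Cn^{D+1}$ (with $C=\bar r_D/((D+1)\sum_i i\bar s_i)$) at each step so that the forcing drops to degree $D-1$, then invoking the induction hypothesis; you instead solve for all of $q_1,\dots,q_{D+1}$ simultaneously via the triangular system and apply Lemma~\ref{lem:key-technical} once with $\bar r=0$. Your approach is arguably more direct (a single invocation of the base case rather than $D+1$ nested ones) at the price of writing out the back-substitution explicitly; the paper's induction hides this linear algebra inside the inductive step but is otherwise doing the same computation one coefficient at a time.
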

    In contrast to in Lemma \ref{lem:key-technical}, $a_n$ is, modulo exponentially small terms, a $D$ degree polynomial in $n$ plus the weighted average of previous $a_{n-i}$s. Since for any $D$ degree polynomial $A(x)$ the sum $A(1)+A(2)+\cdots+A(n)$ is an $D+1$ degree polynomial in $n$, we expect $a_n$ to essentially be a $D+1$ degree polynomial in $n$.
    \begin{proof}
      We proceed by induction on $D$, the degree of the polynomials $R_n(x)$.
      Lemma \ref{lem:key-technical} establishes the base case $D=0$.
      Now assume $D>0$ and that the assertion is true for $D-1$.
      Let $b_n = a_n - C\cdot n^{D+1}$
      for $C=\frac{\bar r_D}{(D+1)\sum_{i=1}^{i_0} i\cdot \bar s_i}$.
      Straightforward manipulations yield
      \begin{align}
        b_n
          \ = \  \sum_{i=1}^{i_0}s_{i,n}b_{n-i}
             + \sum_{j=0}^{D-1}n^j\cdot
                \ba{\pa{
                    \sum_{i=1}^{i_0}C s_{i,n}(-1)^{D+1-j}\binom{D+1}{j}i^{D+1-j}}
                    + r_{j,n}}
                + f(n)
      \end{align}
      for some function $f(n)\le O(\gamma_0^n)$ for some $\gamma_0\in(0,1)$ (see Appendix \ref{app:bn-simplify}). The constant $C$ is chosen so that the right side contains an $D-1$ degree polynomial in $n$, as opposed to an $D$ degree polynomial, which is the case in the recursion for $\{a_n\}$.
      Let $R_n^*(x) = \sum_{j=0}^{D-1}r_{j,n}^*x^j$ be the polynomial given by
      \begin{align}
        r_{0,n}^*
        &\ \defeq\
          \pa{\sum_{i=1}^{i_0}C s_{i,n}(-1)^{D+1}\binom{D+1}{0}i^{D+1}} + r_{0,n} + f(n)
          \nonumber\\
        r_{j,n}^*
        &\ \defeq\
          \pa{\sum_{i=1}^{i_0}C s_{i,n}(-1)^{D+1-j}\binom{D+1}{j}i^{D+1-j}} + r_{j,n}
      \end{align}
      for $1\le j\le {D-1}$.
      Since, as $n\to\infty$, $s_{i,n}$ converges exponentially fast to $\bar s_i$, $r_{j,n}$ converges exponentially fast to $\bar r_j$, and $f(n)$ converges exponentially fast to 0, we have $r_{j,n}^*$ converges exponentially fast to
      \begin{equation}
        \lim_{n\to\infty} r_{j,n}^*
          \ = \  \pa{\sum_{i=1}^{i_0}C \bar s_i(-1)^{D+1-j}\binom{D+1}{j}i^{D+1-j}} + \bar r_j.
      \end{equation}
      for $0\le j\le D-1$ by Lemma \ref{fact:exponential-convergence-arithmetic}.
      Writing
      \begin{align}
        b_n
          \ = \  \pa{\sum_{i=1}^{i_0}s_{i,n}b_{n-i}} + R^*_n(n),
      \end{align}
      we can apply the induction hypothesis to $b_n$ to obtain a
      a degree $D$ polynomial $Q^*(x)$
      such that $b_n = Q^*(n) + O(\gamma_1^n)$ for some $\gamma_1\in(0,1)$.
      Set $Q(x) = Q^*(x) + Cx^{D+1}$. Then $Q(x)$ is a degree $D+1$ polynomial
      satisfying $a_n = Q(x) + O(\gamma_1^n)$, as desired.
    \end{proof}


  \subsection{Gaussian Behavior of 2D Recursions}\label{sec:moments}

The result below is the key ingredient in proving Gaussian behavior of gaps.

  \begin{theorem}
    \label{thm:2d-recursion}
    Let $i_0$ and $j_0$ be positive integers.
    Let $t_{i,j}$ be real numbers for $1\le i\le i_0, 0\le j\le j_0$
    such that for all $i$, $\hat t_i \defeq \sum_{j=0}^{j_0} t_{i,j}\ge 0$.
    Suppose that the polynomial
    $T(x)=x^{i_0}-\sum_{i=1}^{i_0}\hat t_ix^{i_0-i}$
    has the maximum root property with some maximum root $\lambda_1$.
    Suppose $p_{n,k}$ is a two-dimensional recurrence sequence satisfying,
    for $n\ge n_0$,
    \begin{equation}
      \label{eq:R-def}
      p_{n,k}\ = \ \sum_{i=1}^{i_0}\sum_{j=0}^{j_0}t_{i,j}p_{n-i,k-j}.
    \end{equation}
    Suppose further that $p_{n,k}\ge 0$ for all $n$ and $k$, $p_{n,k}=0$ when $n<0$ or $k<0$, finitely many $p_{n,k}$ are nonzero for $n<n_0$, and $\sum_{i=0}^\infty p_{n,i}=\Theta(\lambda_1^n)$.
    Let $X_n$ be the random variable whose mass function is proportional to $p_{n,k}$ over varying $k$ so that
    \begin{equation}
      \Pr[X_n=k] \ = \  \frac{p_{n,k}}{\sum_{i=0}^\infty p_{n,i}}.
    \end{equation}
    Let
    \begin{equation}
      \label{eq:C_mu-and-C_sigma}
      C_\mu \ = \  \frac{\sum_{i=1}^{i_0}\sum_{j=0}^{j_0} \frac{t_{i,j}\cdot j}{\lambda_1^i}}
                   {\sum_{i=1}^{i_0}\sum_{j=0}^{j_0} \frac{t_{i,j}\cdot i}{\lambda_1^i}},
        \qquad
      C_\sigma \ = \  \frac
            {\sum_{i=1}^{i_0}\sum_{j=0}^{j_0}
            \frac{t_{i,j}}{\lambda_1^i}
            \cdot(j-C_\mu i)^2 }
            {\sum_{i=1}^{i_0}\sum_{j=0}^{j_0}\frac{t_{i,j}\cdot i}{\lambda_1^i}}
    \end{equation}
    be constants, and assume $C_\sigma>0$.
    Then there exist constants $d_\mu, d_\sigma\in\RR,$ and $\gamma_\mu,\gamma_\sigma\in(0,1)$ such that $\mu_n = C_\mu n + d_\mu + O(\gamma_\mu^n)$ and $\sigma_n^2 = C_\sigma n + d_\sigma + O(\gamma_\sigma^n)$.
    Furthermore, $(X_n-\mu_n)/\sigma_n$ converges weakly to the standard normal $N(0,1)$ as $n\to\infty$.
  \end{theorem}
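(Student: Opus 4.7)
The plan is to apply the method of moments: if the central moments $\tilde\mu_n(m) = \E[(X_n - \mu_n)^m]$ satisfy $\tilde\mu_n(2L)/\tilde\mu_n(2)^L \to (2L-1)!!$ and $\tilde\mu_n(2L+1)/\tilde\mu_n(2)^{L+1/2} \to 0$, then since the standard normal is determined by its moments, $(X_n-\mu_n)/\sigma_n$ converges weakly to $N(0,1)$. So our job reduces to proving precise polynomial-plus-exponential asymptotics for $\tilde\mu_n(m)$ and reading off the leading coefficient.

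First I would sum \eqref{eq:R-def} over $k$ to see that $S_n := \sum_k p_{n,k}$ satisfies a one-dimensional linear recurrence with characteristic polynomial $T(x)$. By Theorem \ref{thm:general-binet} and the hypothesis $S_n = \Theta(\lambda_1^n)$, we have $S_n = a\lambda_1^n + O(\lambda_2^n)$ for some $a > 0$, so the weights $s_{i,n} := \hat t_i S_{n-i}/S_n$ converge exponentially to $\bar s_i := \hat t_i/\lambda_1^i$, with $\sum_i s_{i,n} = \sum_i \bar s_i = 1$ (the latter because $T(\lambda_1)=0$). The substitution $y \mapsto \lambda_1 y$ in $T$ shows that $S(y) := y^{i_0} - \sum_i \bar s_i y^{i_0-i}$ has the maximum root property with maximum root $1$, so Lemmas \ref{lem:key-technical} and \ref{lem:key-technical-polyn} apply to every recurrence of the form $a_n = \sum_i s_{i,n} a_{n-i} + (\text{polynomial in } n)$. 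Multiplying \eqref{eq:R-def} by $k$ and summing yields
\begin{equation*}
\mu_n \ = \ \sum_{i=1}^{i_0} s_{i,n}\mu_{n-i} + r_n, \qquad r_n \to \sum_{i,j}\frac{t_{i,j}\,j}{\lambda_1^i} \text{ exponentially fast},
\end{equation*}
and Lemma \ref{lem:key-technical} then gives $\mu_n = C_\mu n + d_\mu + O(\gamma_\mu^n)$ once the definition of $C_\mu$ is unwound.

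For higher central moments I would multiply \eqref{eq:R-def} by $(k-\mu_n)^m$ and expand via the binomial theorem, using $\mu_{n-i} - \mu_n = -C_\mu i + O(\gamma_\mu^n)$, to obtain
\begin{equation*}
\tilde\mu_n(m) \ = \ \sum_{i=1}^{i_0} s_{i,n}\tilde\mu_{n-i}(m) + \sum_{r=0}^{m-1}\binom{m}{r}\sum_{i,j}\frac{t_{i,j}S_{n-i}}{S_n}\bigl(j - C_\mu i + O(\gamma_\mu^n)\bigr)^{m-r}\tilde\mu_{n-i}(r).
\end{equation*}
An induction on $m$ then establishes $\tilde\mu_n(m) = \tilde Q_m(n) + O(\gamma^n)$ for a polynomial $\tilde Q_m$ of degree at most $\lfloor m/2 \rfloor$, with leading coefficient exactly $(2L-1)!!\,C_\sigma^L$ when $m = 2L$. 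The base cases $m=0,1$ are trivial, and $m=2$ gives $\sigma_n^2 = C_\sigma n + d_\sigma + O(\gamma_\sigma^n)$ directly from Lemma \ref{lem:key-technical-polyn}. In the inductive step, the leading contribution for even $m = 2L$ comes from the $r = 2L-2$ term (the $r = 2L-1$ term's top-degree coefficient vanishes by the identity $\sum_{i,j}(t_{i,j}/\lambda_1^i)(j - C_\mu i) = 0$ defining $C_\mu$); after the division by $L \sum_i i\bar s_i$ dictated by Lemma \ref{lem:key-technical-polyn} and the rewriting $\sum_{i,j}(t_{i,j}/\lambda_1^i)(j - C_\mu i)^2 = C_\sigma \sum_i i\bar s_i$, the product $(2L-1)!!\,C_\sigma^L$ is exactly what survives. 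For odd $m = 2L+1$, the same $C_\mu$-identity kills the top-degree coefficient of the forcing polynomial, so $\tilde Q_m$ has degree $\le L$, enough for the odd moments to be negligible.

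These asymptotics imply $\tilde\mu_n(2L)/\sigma_n^{2L} \to (2L-1)!!$ and $\tilde\mu_n(2L+1)/\sigma_n^{2L+1} \to 0$, completing the method of moments argument. The main obstacle is the combinatorial bookkeeping in the inductive step: one must isolate precisely which $(r,i,j)$ triples contribute to the top coefficient of the forcing polynomial, invoke the definitions of $C_\mu$ and $C_\sigma$ at the right moments to cancel or consolidate terms into the clean factor $(2L-1)!!\,C_\sigma^L$, and carefully handle the mild technical point that an $O(\gamma^n)$ coefficient in the top slot of the forcing polynomial---which Lemma \ref{lem:key-technical-polyn} nominally promotes by one degree---actually produces only lower-order contributions, so the degree bound is preserved.
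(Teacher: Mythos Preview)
Your proposal is correct and follows essentially the same route as the paper: derive a recursion for the central moments $\tilde\mu_n(m)$ (the paper obtains the identical recursion via the generating-function operator $\tilde P_{n,m}(x)=(x\tilde P_{n,m-1}(x))'$ rather than by directly multiplying \eqref{eq:R-def} by $(k-\mu_n)^m$, but this is cosmetic), then induct on $m$ using Lemmas \ref{lem:key-technical} and \ref{lem:key-technical-polyn}, exploiting the vanishing of $\sum_{i,j}(t_{i,j}/\lambda_1^i)(j-C_\mu i)$ to kill the top-degree contribution from the $(m-1)$\textsuperscript{st} moment and the definition of $C_\sigma$ to extract the factor $(2L-1)!!\,C_\sigma^L$ from the $(m-2)$\textsuperscript{nd} moment. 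The technical point you flag about the $O(\gamma^n)$ top coefficient is handled in the paper by writing the forcing term as $R_n(n)$ for a sequence of genuine degree-$(m-1)$ polynomials $R_n(x)$ (built from the exact $Q_{m-\ell}(x-i)$) that converge exponentially to a limit polynomial $\bar R$, so that Lemma \ref{lem:key-technical-polyn} applies directly without any promotion issue.
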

  In this theorem, imagine we have fixed a gap size $g$ and think of $p_{n,k}$ as the number of $M\in[G_n, G_{n+1})$ whose decomposition has exactly $k$ gaps of size $g$. Under this interpretation, the random variable $X_n$ is be identical to $K_{g,n}$.
  Note that $T(x)$ having the maximum root property does not make the condition $\sum_{i=0}^\infty p_{n,i}=\Theta(\lambda_1^n)$ redundant for reasons illustrated at the end of \S\ref{sec:notation}. This condition is necessary in Corollary \ref{lem:omega-recursion}.

  We approach this problem using the method of moments, a common method for proving random variables converge in distribution to the standard normal distribution.
  \begin{lemma}[Method of Moments]
    \label{lem:moments}
    Suppose $X_1, X_2,\dots$ are random variables such that for all integers $m\ge 0$, we have
    \begin{equation}
      \lim_{n\to\infty}\E[X_n^{2m}] \ = \  (2m-1)!!
      \quad
      \text{and}
      \quad
      \lim_{n\to\infty}\E[X_n^{2m+1}] \ = \  0.
    \end{equation}
    Then the sequence $X_1, X_2,\dots$ converges weakly in distribution to the standard normal $N(0,1)$.
  \end{lemma}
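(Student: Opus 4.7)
The plan is to invoke the classical Fr\'echet--Shohat theorem: if a sequence of random variables $\{X_n\}$ has moments $\E[X_n^m]$ converging for each fixed $m\ge 0$ to the moments $\mu_m$ of some probability distribution $\mu$, and if $\mu$ is uniquely determined by its moments, then $X_n$ converges weakly to $\mu$. Applied to the hypothesis, this reduces the lemma to two classical facts about the standard normal.

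First I would recognize the limiting values as the moments of $N(0,1)$: for $Z\sim N(0,1)$, a standard integration by parts (or differentiating the moment generating function $\E[e^{tZ}]=e^{t^2/2}$) gives $\E[Z^{2m}]=(2m-1)!!$, and the odd moments vanish by symmetry of the density. So the hypothesis reads $\E[X_n^m]\to \E[Z^m]$ for every fixed $m\ge 0$.

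Second, I would verify that $N(0,1)$ is determined by its moments. The cleanest route is Carleman's condition $\sum_{m\ge 1}(\E[Z^{2m}])^{-1/(2m)}=\infty$: since $(2m-1)!!\le (2m)!$ and Stirling gives $((2m)!)^{1/(2m)}=\Theta(m)$, we have $((2m-1)!!)^{1/(2m)}=O(\sqrt{m})$, so the $m$th term of the Carleman series is $\Omega(1/\sqrt{m})$ and the sum diverges. Alternatively, since the MGF of $N(0,1)$ exists in a neighborhood of zero, moment-determinacy is immediate. Combining these two ingredients via Fr\'echet--Shohat yields $X_n\Rightarrow N(0,1)$. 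There is no substantive obstacle to overcome here: the lemma is essentially a direct citation of a classical result, and the only genuinely mathematical step---moment-determinacy of the normal---is a textbook consequence of Carleman's condition.
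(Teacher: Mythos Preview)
Your argument is correct: identifying the limiting moments as those of $N(0,1)$, checking moment-determinacy via Carleman (or via existence of the MGF), and invoking Fr\'echet--Shohat is exactly the standard route. The paper itself does not prove this lemma; it simply states it as a classical tool and moves on, so there is no alternative approach to compare against.
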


    The proof of Theorem \ref{thm:2d-recursion} proceeds by using generating functions to compute the moments of $X_n$.  Let
    \begin{align}
      P_n(x) &\ \defeq\  \sum_{k=0}^\infty p_{n,k}x^k \nonumber\\
      \Omega_n &\ \defeq\  P_n(1) = \sum_{k=0}^\infty p_{n,k} \nonumber\\
      \tilde P_{n,0}(x) &\ \defeq\  \frac{P_n(x)}{x^{\mu+1}} \nonumber\\
      \tilde P_{n,m}(x) &\ \defeq\  (x\tilde P_{n,m-1}(x))' \nonumber\\
      \tilde\mu_n(m) &\ \defeq\  \frac{\tilde P_{n,m}(1)}{\Omega_n}.
    \end{align}
    Then it follows from definitions that
    \begin{align}
      \mu_n &\ = \  \frac{P_n'(1)}{P_n(1)} \nonumber\\
      \tilde\mu_n(m) &\ = \  \E[(X_n-\mu_n)^m] \nonumber\\
      \sigma_n^2 &\ = \  \tilde\mu_n(2).
    \end{align}
    We now prove several lemmas about the above moments and generating functions.
    We ultimately obtain a formula in Theorem \ref{thm:tilde-mu-recursion} that recursively computes the moments $\tilde\mu_n(m)$, which yield Theorem \ref{thm:2d-recursion}.
    \begin{lemma}
      \label{lem:p-recursion}
      For $n\ge n_0$, we have
      \begin{equation}
        \label{eq:p-recursion}
        P_n(x) \ = \  \sum_{i=1}^{i_0}\sum_{j=0}^{j_0} t_{i,j}P_{n-i}(x)x^j.
      \end{equation}
    \end{lemma}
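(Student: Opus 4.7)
The plan is to prove Lemma \ref{lem:p-recursion} by a short direct manipulation of the generating function, in three steps: unfold the definition of $P_n(x)$, apply the two-dimensional recurrence coefficient-wise, and swap summations. I do not anticipate any real obstacle, since the outer double sum over $(i,j)$ is finite and each $P_{n-i}(x)$ is in fact a polynomial: under the hypotheses $p_{n-i,k}\ge 0$ and $\Omega_{n-i}=\sum_k p_{n-i,k}<\infty$, so all but finitely many coefficients must vanish.

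First, starting from $P_n(x)=\sum_{k\ge 0}p_{n,k}x^k$ and using $n\ge n_0$, I would substitute the recurrence (\ref{eq:R-def}) directly into each coefficient to obtain
\[
P_n(x)\ =\ \sum_{k\ge 0}\Bigl(\sum_{i=1}^{i_0}\sum_{j=0}^{j_0}t_{i,j}\,p_{n-i,k-j}\Bigr)x^k.
\]
Second, I would interchange the order of summation, pulling the finite double sum over $(i,j)$ to the outside. This is legitimate because the outer sum is finite and each inner sum over $k$ has only finitely many nonzero terms, yielding
\[
P_n(x)\ =\ \sum_{i=1}^{i_0}\sum_{j=0}^{j_0}t_{i,j}\sum_{k\ge 0}p_{n-i,k-j}\,x^k.
\]

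Third, using the convention from the theorem's hypotheses that $p_{n-i,k-j}=0$ whenever $k-j<0$, I would reindex the inner sum by $k'=k-j$, recognizing it as $x^j\sum_{k'\ge 0}p_{n-i,k'}x^{k'}=x^jP_{n-i}(x)$. Substituting this back produces exactly (\ref{eq:p-recursion}). The only subtlety worth flagging is the boundary case $k<j$, which is handled cleanly by the vanishing convention; alternatively, one can work throughout in the formal power series ring $\RR[[x]]$, where the swap and reindexing are automatic formal identities requiring no convergence argument at all.
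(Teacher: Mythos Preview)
Your proposal is correct and follows essentially the same approach as the paper: unfold the definition of $P_n(x)$, substitute the recurrence \eqref{eq:R-def} coefficient-wise, swap the finite $(i,j)$ sum with the sum over $k$, and reindex to recover $x^jP_{n-i}(x)$. The only difference is that you add a little more justification for the swap and the boundary case $k<j$, which the paper leaves implicit.
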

    \begin{proof}
      This follows immediately from the definitions:
      \begin{align}
        P_n(x)
          &\ = \  \sum_{k=0}^\infty p_{n,k}x^k \nonumber\\
          &\ = \  \sum_{k=0}^\infty \sum_{i=1}^{i_0}\sum_{j=0}^{j_0} t_{i,j}p_{n-i,k-j}x^k \nonumber\\
          &\ = \  \sum_{i=1}^{i_0}\sum_{j=0}^{j_0}t_{i,j}x^j\sum_{k=0}^\infty p_{n-i,k-j}x^{k-j} \nonumber\\
          &\ = \  \sum_{i=1}^{i_0}\sum_{j=0}^{j_0}t_{i,j}x^jP_{n-i}(x).
      \end{align}
    \end{proof}

    From the above we immediately deduce the following relations.

    \begin{corollary}
      \label{lem:omega-recursion}
      \label{lem:mu-recursion}
      For $n\ge n_0$, we have
      \begin{equation}
        \Omega_n
          \ = \  P_n(1)
          \ = \  \tilde P_{n,0}(1)
          \ = \  \sum_{i=1}^{i_0}\sum_{j=0}^{j_0}t_{i,j}P_{n-i}(1)
          \ = \  \sum_{i=1}^{i_0}\sum_{j=0}^{j_0}t_{i,j}\Omega_{n-i}.
      \end{equation} and
      \begin{equation}
        \label{eq:mu-recursion}
        \mu_n
          \ = \  \sum_{i=1}^{i_0}\sum_{j=0}^{j_0} \frac{t_{i,j}\Omega_{n-i}}{\Omega_n}(\mu_{n-i}+j).
      \end{equation}
      By definition of $\Omega_n$, we have $\Omega_n=\Theta(\lambda_1^n)$, so by Theorem \ref{thm:general-binet}, we have that for all $i$, $\Omega_{n-i}/\Omega_n$ converges exponentially quickly to $1/\lambda_1^i$.
    \end{corollary}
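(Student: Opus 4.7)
The plan is to deduce both displayed equations directly from Lemma \ref{lem:p-recursion}, and then combine the resulting recurrence for $\Omega_n$ with Theorem \ref{thm:general-binet} to obtain the exponential convergence of ratios.

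First I would dispatch the chain of equalities for $\Omega_n$. Three of the four equalities are just definitional unpackings: $\Omega_n = P_n(1)$ is the definition of $\Omega_n$; $P_n(1) = \tilde P_{n,0}(1)$ follows because the division by $x^{\mu+1}$ in the definition of $\tilde P_{n,0}$ is harmless at $x=1$; and $\sum_{i,j} t_{i,j} P_{n-i}(1) = \sum_{i,j} t_{i,j} \Omega_{n-i}$ is again $\Omega_{n-i}=P_{n-i}(1)$. The one nontrivial equality, $P_n(1) = \sum_{i,j} t_{i,j} P_{n-i}(1)$, comes from evaluating the polynomial identity of Lemma \ref{lem:p-recursion} at $x=1$ (so the $x^j$ factor disappears). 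In particular, $\Omega_n$ satisfies a linear recurrence whose characteristic polynomial is precisely $T(x)$.

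For the $\mu_n$ recursion I would use the fact that $\mu_n = P_n'(1)/P_n(1)$, which follows from $\sum_k k\, p_{n,k} = P_n'(1)$ and $\sum_k p_{n,k}=P_n(1)$. Differentiating the identity $P_n(x) = \sum_{i,j} t_{i,j} x^j P_{n-i}(x)$ from Lemma \ref{lem:p-recursion} by the product rule and evaluating at $x=1$ gives
\begin{equation}
P_n'(1) \ = \ \sum_{i=1}^{i_0}\sum_{j=0}^{j_0} t_{i,j}\bigl(P_{n-i}'(1) + j\,P_{n-i}(1)\bigr) \ = \ \sum_{i=1}^{i_0}\sum_{j=0}^{j_0} t_{i,j}\,\Omega_{n-i}\bigl(\mu_{n-i}+j\bigr).
\end{equation}
Dividing both sides by $\Omega_n = P_n(1)$ yields \eqref{eq:mu-recursion}.

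Finally, for the exponential convergence of $\Omega_{n-i}/\Omega_n$ to $1/\lambda_1^i$, I would apply Theorem \ref{thm:general-binet} to the recurrence for $\Omega_n$ just obtained. By hypothesis, $T(x)$ has the maximum root property with maximum root $\lambda_1$, so the Binet expansion gives $\Omega_n = a_1 \lambda_1^n + O(n^{i_0-2}\lambda_2^n)$ for some constant $a_1$ and the second-largest root $|\lambda_2|<\lambda_1$. The separate hypothesis $\sum_k p_{n,k} = \Theta(\lambda_1^n)$ forces $a_1>0$, so the last assertion of Theorem \ref{thm:general-binet} applies and delivers the claim. The only subtle point in the whole proof is recognizing \emph{why} the theorem needs to assume $\sum_k p_{n,k}=\Theta(\lambda_1^n)$ as an independent hypothesis rather than deducing it from $T(x)$: as the remark after Theorem \ref{thm:plrs-maximum-root} explains, for specific initial conditions the leading Binet coefficient can vanish, which would leave $\Omega_n$ only $O(\lambda_1^n)$ and ratios $\Omega_{n-i}/\Omega_n$ with a different limit.
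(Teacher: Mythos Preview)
Your proof is correct and is exactly the natural unpacking of what the paper leaves implicit: the paper gives no proof here beyond ``From the above we immediately deduce the following relations,'' and your approach of plugging $x=1$ into Lemma~\ref{lem:p-recursion} for the $\Omega_n$ recursion, differentiating and setting $x=1$ for the $\mu_n$ recursion, and then invoking Theorem~\ref{thm:general-binet} with the $\Theta(\lambda_1^n)$ hypothesis for the ratio convergence is precisely the intended argument. Your closing remark about why $\sum_k p_{n,k}=\Theta(\lambda_1^n)$ must be assumed rather than deduced is on point and matches the paper's own comment after Theorem~\ref{thm:2d-recursion}.
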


    \begin{theorem}
      \label{thm:tilde-mu-recursion}
      For $n\ge n_0$, we have
      \begin{equation}
        \label{eq:tilde-mu-recursion}
        \tilde\mu_n(m)
          \ = \  \sum_{\ell=0}^m\binom{m}{\ell}
            \sum_{i=1}^{i_0}\sum_{j=0}^{j_0}
              \frac{\Omega_{n-i}t_{i,j}}{\Omega_n}
              \cdot (j+\mu_{n-i}-\mu_n)^\ell
              \cdot \tilde\mu_{n-i}(m-\ell).
      \end{equation}
    \end{theorem}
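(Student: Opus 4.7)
The plan is to recognize $\tilde\mu_n(m)$ as the $m$th central moment $\E[(X_n-\mu_n)^m]$ and then derive the stated recursion by a direct binomial-theorem expansion. As a preliminary, I would show by induction on $m$ that
\begin{equation*}
  \tilde P_{n,m}(x) \ = \ \sum_k p_{n,k}(k-\mu_n)^m x^{k-\mu_n-1};
\end{equation*}
the base case $m=0$ is immediate from $\tilde P_{n,0}(x)=P_n(x)/x^{\mu_n+1}$, and the inductive step is a one-line differentiation since $(x\cdot x^{k-\mu_n-1})'=(k-\mu_n)x^{k-\mu_n-1}$. Evaluating at $x=1$ and dividing by $\Omega_n$ then gives $\tilde\mu_n(m)=\sum_k p_{n,k}(k-\mu_n)^m/\Omega_n=\E[(X_n-\mu_n)^m]$, so it suffices to establish \eqref{eq:tilde-mu-recursion} with its left-hand side replaced by that central moment.

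With this setup the remainder is pure bookkeeping. I would substitute \eqref{eq:R-def} into $\sum_k p_{n,k}(k-\mu_n)^m$, reindex with $\ell=k-j$, and split
\begin{equation*}
  k-\mu_n \ = \ (\ell-\mu_{n-i})+(j+\mu_{n-i}-\mu_n),
\end{equation*}
then apply the binomial theorem to $(k-\mu_n)^m$. The factor $(j+\mu_{n-i}-\mu_n)^{m-r}$ pulls outside the inner $\ell$-sum, and the remaining $\sum_\ell p_{n-i,\ell}(\ell-\mu_{n-i})^r$ is exactly $\Omega_{n-i}\tilde\mu_{n-i}(r)$ by the preliminary step applied at index $n-i$. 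Collecting terms and dividing by $\Omega_n$ yields
\begin{equation*}
  \tilde\mu_n(m)\ = \ \sum_{r=0}^m\binom{m}{r}\sum_{i=1}^{i_0}\sum_{j=0}^{j_0}\frac{\Omega_{n-i}t_{i,j}}{\Omega_n}(j+\mu_{n-i}-\mu_n)^{m-r}\tilde\mu_{n-i}(r),
\end{equation*}
and reindexing by $\ell=m-r$ (using $\binom{m}{m-\ell}=\binom{m}{\ell}$) reproduces \eqref{eq:tilde-mu-recursion} verbatim.

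There is no substantive obstacle here: the only points requiring care are the operator identification in the preliminary step (the $x^{-1}$ in $\tilde P_{n,0}(x)$ means the differential operator $f\mapsto(xf)'$ shifts exponents in a particular way that must be tracked through the induction) and the justification that the interchanges of summation are legal. The latter holds because for each fixed $n$ the row $\{p_{n,k}\}_k$ has finite support: this is true for $n<n_0$ by hypothesis, and for $n\ge n_0$ a simple induction using \eqref{eq:R-def} preserves the property, so every sum appearing in the computation is in fact a finite sum.
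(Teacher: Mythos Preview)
Your proof is correct and takes a somewhat different route from the paper's. The paper works at the generating-function level: it first observes from Lemma~\ref{lem:p-recursion} that $\tilde P_{n,0}(x)=\sum_{i,j}t_{i,j}\tilde P_{n-i,0}(x)\,x^{j+\mu_{n-i}-\mu_n}$, then proves by induction on $m$ (using the product rule and Pascal's identity) the full functional identity
\[
  \tilde P_{n,m}(x)=\sum_{i,j}t_{i,j}\sum_{\ell=0}^m\binom{m}{\ell}(j+\mu_{n-i}-\mu_n)^\ell\,\tilde P_{n-i,m-\ell}(x)\,x^{j+\mu_{n-i}-\mu_n},
\]
and only at the end evaluates at $x=1$. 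You instead first identify $\tilde\mu_n(m)$ explicitly as the $m$th central moment (a fact the paper simply asserts ``follows from definitions''), and then work entirely with the scalar sums $\sum_k p_{n,k}(k-\mu_n)^m$, substituting \eqref{eq:R-def} and expanding via the binomial theorem. Your approach is more elementary in that it bypasses the inductive differentiation argument of Appendix~\ref{app:tilde-P_nm}; the paper's approach has the minor advantage of producing the stronger identity \eqref{eq:tilde-P_nm} for all $x$, though that identity is never used except at $x=1$. Your remark about finite support of each row $\{p_{n,k}\}_k$ is a nice point of rigor that the paper leaves implicit.
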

    \begin{proof}
      Applying Lemma \ref{lem:p-recursion}, we find
      \begin{equation}
        \label{eq:tilde-P_n0}
        \tilde P_{n,0}(x)
          \ = \  \sum_{i=1}^{i_0}\sum_{j=0}^{j_0}t_{i,j}
            \tilde P_{n-i,0}(x)\cdot x^{j+\mu_{n-i}-\mu_n}.
      \end{equation}
      By induction, we can establish (see Appendix \ref{app:tilde-P_nm})
      \begin{equation}
        \label{eq:tilde-P_nm}
        \tilde P_{n,m}(x)
          \ = \  \sum_{i=1}^{i_0}\sum_{j=0}^{j_0}t_{i,j}
            \sum_{\ell=0}^m
              \binom{m}{\ell}(j+\mu_{n-i}-\mu_n)^\ell
                \tilde P_{n-i,m-\ell}(x)\cdot x^{j+\mu_{n-i}-\mu_n},
      \end{equation}
      so
      \begin{align}
        \tilde\mu_n(m)
          \ = \  \frac{\tilde P_{n,m}(1)}{\Omega_n}
          &\ = \  \frac{1}{\Omega_n}\sum_{i=1}^{i_0}\sum_{j=0}^{j_0}t_{i,j}
            \sum_{\ell=0}^m
              \binom{m}{\ell}(j+\mu_{n-i}-\mu_n)^\ell
                \tilde P_{n-i,m-\ell}(1) \nonumber\\
          &\ = \  \frac{1}{\Omega_n}
            \sum_{\ell=0}^m\binom{m}{\ell}
            \sum_{i=1}^{i_0}\sum_{j=0}^{j_0}t_{i,j}
              (j+\mu_{n-i}-\mu_n)^\ell
                \Omega_{n-i}\tilde\mu_{n-i}(m-\ell) \nonumber\\
          &\ = \  \sum_{\ell=0}^m\binom{m}{\ell}
            \sum_{i=1}^{i_0}\sum_{j=0}^{j_0}
              \frac{\Omega_{n-i}t_{i,j}}{\Omega_n}
              \cdot (j+\mu_{n-i}-\mu_n)^\ell
              \cdot \tilde\mu_{n-i}(m-\ell),
      \end{align} completing the proof.
    \end{proof}

    Out next goal is to prove.
    \begin{equation}
      \label{eq:moments-gaussian-condition}
      \lim_{n\to\infty}\frac{\tilde\mu_n(2m)}{\tilde\mu_n(2)^{m}} \ = \  (2m-1)!!
      \quad
      \text{and}
      \quad
      \lim_{n\to\infty}\frac{\tilde\mu_n(2m+1)}{\tilde\mu_n(2)^{m+\half}} \ = \  0.
    \end{equation}
    By Lemma \ref{lem:moments}, these equalities imply Theorem \ref{thm:2d-recursion}.
    To prove these equalities, we first show $\mu_n$ is essentially linear in $n$. Then we determine for all $m$ the behavior of $\tilde\mu_n(m)$, the $m$\tsup{th} moment of $X_n-\mu_n$, up to an exponentially small term. We prove $\tilde\mu_n(m)$ is a degree (at most, if $m$ is odd) $\floor{m/2}$ polynomial in $n$, and for even moments $\tilde\mu_n(2m)$ we addtionally compute the leading coefficient of this polynomial. We rely heavily on Lemmas \ref{lem:key-technical} and \ref{lem:key-technical-polyn} to pin down the polynomial behavior of the moments.

    \begin{lemma}
      \label{lem:mu-calculation}
      There exists a real number $d_\mu$ and a $\gamma_\mu\in(0,1)$ such that
      \begin{equation}
        \label{eq:mu-calculation}
        \mu_n \ = \  C_\mu\cdot n + d_\mu + O(\gamma_\mu^n).
      \end{equation}
    \end{lemma}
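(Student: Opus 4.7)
The plan is to apply Lemma \ref{lem:key-technical} directly to the recursion for $\mu_n$ given by Corollary \ref{lem:mu-recursion}. Rewriting \eqref{eq:mu-recursion} by separating the $\mu_{n-i}$ and constant contributions yields
\begin{equation}
  \mu_n \ = \ \sum_{i=1}^{i_0} s_{i,n} \mu_{n-i} + r_n,
\end{equation}
where I set
\begin{equation}
  s_{i,n} \ \defeq\ \hat t_i \cdot \frac{\Omega_{n-i}}{\Omega_n}, \qquad
  r_n \ \defeq\ \sum_{i=1}^{i_0}\sum_{j=0}^{j_0} \frac{t_{i,j}\,\Omega_{n-i}}{\Omega_n}\cdot j.
\end{equation}
So the task reduces to checking that $s_{i,n}$ and $r_n$ satisfy the hypotheses of Lemma \ref{lem:key-technical}.

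Using Corollary \ref{lem:omega-recursion}, $\Omega_{n-i}/\Omega_n$ converges exponentially quickly to $1/\lambda_1^i$. By Lemma \ref{fact:exponential-convergence-arithmetic}, $s_{i,n}$ converges exponentially to $\bar s_i \defeq \hat t_i/\lambda_1^i$ and $r_n$ converges exponentially to $\bar r \defeq \sum_{i,j} t_{i,j}\,j/\lambda_1^i$. To verify the normalization $\sum_i s_{i,n}=1$, I would divide the $\Omega$-recurrence $\Omega_n = \sum_i \hat t_i\, \Omega_{n-i}$ by $\Omega_n$.

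The structural ingredient that makes the lemma applicable is that the limiting characteristic polynomial
\begin{equation}
  S(x) \ = \ x^{i_0} - \sum_{i=1}^{i_0} \bar s_i \, x^{i_0-i} \ = \ \lambda_1^{-i_0}\,T(\lambda_1 x)
\end{equation}
has the maximum root property with maximum root $1$; this is immediate from the hypothesis that $T(x)$ has the maximum root property with maximum root $\lambda_1$, since scaling by $\lambda_1$ simply scales all roots by $1/\lambda_1$.

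With these hypotheses verified, Lemma \ref{lem:key-technical} yields $\mu_n = \bar r/(\sum_i i\bar s_i)\cdot n + d_\mu + O(\gamma_\mu^n)$ for some $d_\mu\in\RR$ and $\gamma_\mu\in(0,1)$. A direct computation of the leading coefficient gives
\begin{equation}
  \frac{\bar r}{\sum_{i=1}^{i_0} i\,\bar s_i} \ = \
    \frac{\sum_{i=1}^{i_0}\sum_{j=0}^{j_0} t_{i,j}\,j/\lambda_1^i}
         {\sum_{i=1}^{i_0}\sum_{j=0}^{j_0} t_{i,j}\,i/\lambda_1^i} \ = \ C_\mu,
\end{equation}
exactly matching \eqref{eq:C_mu-and-C_sigma}. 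The only potential obstacle is the bookkeeping check that $\sum_i s_{i,n}=1$ and that $S$ inherits the maximum root property from $T$; both are essentially one-line observations, so I do not anticipate serious technical difficulty.
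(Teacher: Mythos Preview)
Your proposal is correct and follows essentially the same approach as the paper: rewrite \eqref{eq:mu-recursion} in the form required by Lemma~\ref{lem:key-technical}, verify that $s_{i,n}\to\hat t_i/\lambda_1^i$ and $r_n\to\bar r$ exponentially fast via Corollary~\ref{lem:omega-recursion}, check that the limiting polynomial $S$ is a rescaling of $T$ (your identity $S(x)=\lambda_1^{-i_0}T(\lambda_1 x)$ is in fact cleaner than the paper's phrasing), and read off the leading coefficient as $C_\mu$. Your explicit mention of the normalization $\sum_i s_{i,n}=1$ via the $\Omega$-recurrence is a detail the paper leaves implicit, so if anything your write-up is slightly more careful.
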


    \begin{proof}
      Recall
      \be
        C_\mu \ = \  \frac{\sum_{i=1}^{i_0}\sum_{j=0}^{j_0}\frac{t_{i,j}\cdot j}{\lambda_1^i}}
                     {\sum_{i=1}^{i_0}\sum_{j=0}^{j_0}\frac{t_{i,j}\cdot i}{\lambda_1^i}}.
      \ee
      Choose $s_{i,n} = \frac{\Omega_{n-i}}{\Omega_n}\sum_{j=0}^{j_0}t_{i,j} = \frac{\Omega_{n-i}}{\Omega_n}\hat t_i$
      and $r_{n} = \sum_{i=1}^{i_0}\sum_{j=0}^{j_0}\frac{t_{i,j}\cdot j\cdot \Omega_{n-i}}{\Omega_n}$.
      Using Lemmas \ref{fact:exponential-convergence-arithmetic}
      and \ref{lem:omega-recursion},
      we have that, for each $i$, $s_{i,n}$ converges exponentially quickly to
      $\bar s_i=\frac{1}{\lambda_1^i}\sum_{j=0}^{j_0}t_{i,j} = \hat t_i$
      and $r_n$ converges exponentially quickly to
      $\bar r = \sum_{i=1}^{i_0}\sum_{j=0}^{j_0}\frac{t_{i,j}\cdot j}{\lambda_1^i}$.
      By Lemma \ref{lem:mu-recursion}, we have
      \be
        \mu_n \ = \  \pa{\sum_{i=1}^{i_0}s_{i,n}\mu_{n-i}} + r_n.
      \ee
      Furthermore, the polynomial
      $S(x) \defeq x^{i_0} - \sum_{i=1}^{i_0}\bar s_i x^{i_0-i}$
      satisfies $S(x) = T(x/\lambda_1)$,
      so $S$ has the maximum root property with maximum root 1.
      Then, by Lemma \ref{lem:key-technical},
      there exist $d_\mu\in\RR$ and $\gamma_\mu\in(0,1)$ such that
      \begin{equation}
        \mu_n
          \ = \ \frac{\bar r}{\sum_{i=1}^{i_0}i\bar s_i} \cdot n + d_\mu + O(\gamma_\mu^n)
          \ = \ C_\mu\cdot n + d_\mu + O(\gamma_\mu^n).
      \end{equation}
    \end{proof}
    \begin{lemma}
      \label{lem:moments-calculation}
      For each integer $m\ge0$, there exist
      $\gamma_{2m},\gamma_{2m+1}\in(0,1)$
      and polynomials $Q_{2m}$ of degree exactly $m$
      and $Q_{2m+1}$ of degree at most $m$
      such that
      \begin{align}
        \tilde\mu_n(2m) &\ = \  Q_{2m}(n) + O(\gamma_{2m}^n) \nonumber\\
        \tilde\mu_n(2m+1) &\ = \  Q_{2m+1}(n) + O(\gamma_{2m+1}^n).
      \end{align}
      Furthermore, if $C_{2m}\defeq[x^m]Q_{2m}$ and $C_{2m+1}\ \defeq\ [x^m]Q_{2m+1}$, then for all $m\ge0$, $C_{2m}$ $=$ $(2m-1)!!\cdot C_\sigma^m$ (We take $(-1)!!\defeq 1$).
    \end{lemma}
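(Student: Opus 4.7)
\emph{The plan is to proceed by induction on $m$.} The base case $m=0$ is trivial: $\tilde\mu_n(0)=1$ for all $n$, so take $Q_0(x)=1$ and $C_0 = 1 = (-1)!!\cdot C_\sigma^0$. For the inductive step, isolate the $\ell=0$ term in Theorem \ref{thm:tilde-mu-recursion} to write
\[
\tilde\mu_n(m) \ = \ \sum_{i=1}^{i_0} s_{i,n}\, \tilde\mu_{n-i}(m) + R_n(n),
\]
where $s_{i,n} = \frac{\Omega_{n-i}}{\Omega_n}\hat t_i$ and $R_n(n)$ collects the $\ell \ge 1$ contributions. By Corollary \ref{lem:omega-recursion} and Theorem \ref{thm:general-binet}, $s_{i,n}$ converges exponentially quickly to $\bar s_i \defeq \hat t_i/\lambda_1^i$. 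The recursion $\Omega_n = \sum_i \hat t_i \Omega_{n-i}$ combined with $\Omega_n = \Theta(\lambda_1^n)$ forces $\sum_i \bar s_i = 1$, and $S(x) = x^{i_0} - \sum_i \bar s_i x^{i_0-i} = \lambda_1^{-i_0} T(\lambda_1 x)$ inherits the maximum root property with maximum root $1$. These verify the hypotheses of Lemma \ref{lem:key-technical-polyn}.

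Next, using the inductive hypothesis for $\tilde\mu_{n-i}(m-\ell)$ and Lemma \ref{lem:mu-calculation}'s estimate $\mu_{n-i}-\mu_n = -C_\mu i + O(\gamma_\mu^n)$, together with Lemma \ref{fact:exponential-convergence-arithmetic}, expand $R_n(n)$ as a polynomial in $n$ plus $O(\gamma^n)$, whose coefficients converge exponentially quickly to those of the limiting polynomial
\[
\bar R(x) \ = \ \sum_{\ell=1}^m \binom{m}{\ell} \sum_{i,j} \frac{t_{i,j}}{\lambda_1^i}(j - C_\mu i)^\ell\, Q_{m-\ell}(x-i).
\]
The $\ell$\textsuperscript{th} summand has degree $\floor{(m-\ell)/2}$ in $x$, so naively $\deg\bar R \le \floor{(m-1)/2}$, attained at $\ell=1$. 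The crucial cancellation is that the $\ell=1$ contribution to this top coefficient is proportional to $\sum_{i,j} \frac{t_{i,j}}{\lambda_1^i}(j - C_\mu i)$, which vanishes by the very definition of $C_\mu$ in \eqref{eq:C_mu-and-C_sigma}. Applying Lemma \ref{lem:key-technical-polyn} then yields $\tilde\mu_n(m) = Q_m(n) + O(\gamma_m^n)$.

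To determine $\deg Q_m$ and $C_m$ I would split by parity. For odd $m = 2m'+1$, after the $\ell=1$ cancellation the surviving maximum degree in $\bar R$ comes from $\ell=3$ (degree at most $m'-1$), and Lemma \ref{lem:key-technical-polyn} gives $\deg Q_{2m'+1} \le m'$. For even $m = 2m'$, the leading contribution to $\bar R$ comes from $\ell=2$, at degree exactly $m'-1$ with coefficient $\binom{2m'}{2} C_{2m'-2} \sum_{i,j} \frac{t_{i,j}}{\lambda_1^i}(j - C_\mu i)^2 = \binom{2m'}{2} C_{2m'-2}\, C_\sigma \sum_i i\bar s_i$ by the definition of $C_\sigma$. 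Lemma \ref{lem:key-technical-polyn} then produces a degree-$m'$ polynomial with top coefficient $\frac{\binom{2m'}{2} C_{2m'-2}\, C_\sigma}{m'} = (2m'-1)\, C_{2m'-2}\, C_\sigma$; combining with the inductive hypothesis $C_{2m'-2} = (2m'-3)!!\, C_\sigma^{m'-1}$ gives $C_{2m'} = (2m'-1)!!\, C_\sigma^{m'}$, as required.

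The principal obstacle is the leading-order cancellation identity $\sum_{i,j} \frac{t_{i,j}}{\lambda_1^i}(j - C_\mu i) = 0$: without it, $\bar R$ would gain one extra degree and the odd moments would grow too fast to satisfy the Gaussian moment conditions in \eqref{eq:moments-gaussian-condition}. Careful bookkeeping of which $\ell$ contribute to each degree of $\bar R$, and of the exponential convergence of all coefficients so that Lemma \ref{lem:key-technical-polyn} applies cleanly, is the technical heart of the argument.
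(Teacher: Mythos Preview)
Your proposal is correct and follows essentially the same approach as the paper: isolate the $\ell=0$ term of Theorem~\ref{thm:tilde-mu-recursion}, use the inductive hypothesis to write the remainder as an exponentially convergent polynomial sequence, exploit the identity $\sum_{i,j}\frac{t_{i,j}}{\lambda_1^i}(j-C_\mu i)=0$ to kill the top degree contributed by $\ell=1$, and invoke Lemma~\ref{lem:key-technical-polyn}. The only organizational difference is that the paper inducts on pairs $(2m,2m+1)$ and explicitly includes $\tilde\mu_n(1)=0$ in the base case, which spares you the (harmless) edge case of applying Lemma~\ref{lem:key-technical-polyn} when $\bar R\equiv 0$ at $m=1$.
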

    The idea for the proof is as follows. In the calculation of $\mu_n(m)$ in Theorem \ref{thm:tilde-mu-recursion} the coefficients of $\mu_{n-i}(m)$ sum to 1, the coefficients of $\mu_{n-i}(m-1)$ sum to 0, and the coefficients of $\mu_{n-i}(m-2)$ sum to $\binom{m}{2}\cdot\text{(constant)}$.
    The $m$\tsup{th} moments can thus be written in the form of \eqref{eq:key-technical-polyn-an}, so we can apply Lemma \ref{lem:key-technical-polyn} and compute the degrees and leading coefficients appropriately. Because the coefficients of the $(m-1)$\tsup{th} moments sum to 0, the degrees of the polynomials increase by one with every two values of $m$ as opposed to every one.
    \begin{proof}
      We proceed by induction on $m$.
      The base case $m=0$ follows from noting that
      \begin{align}
        \tilde\mu_n(0) &\ = \  \E[(X_n-\mu_n)^0]\ = \ 1 \nonumber\\
        \tilde\mu_n(1) &\ = \  \E[(X_n-\mu_n)^1]\ = \ 0
      \end{align}
      for all $n\ge n_0$.
      Now assume the statement is true for $m'\le m$.
      That is, there exist
      $\gamma_0$, $\gamma_1$, $\dots$, $\gamma_{2m-1}$ $\in$ $(0,1)$
      and polynomials $Q_0, Q_1, \dots, Q_{2m-1}$ where $Q_k$ has degree $k/2$ when $k$ is even and degree at most $\floor{k/2}$ when $k$ is odd
      such that
      \begin{eqnarray}
        \tilde\mu_n(2m-2) & \ = \ &  Q_{2m-2}(n) + O(\gamma_{2m-2}^n) \nonumber\\
        \tilde\mu_n(2m-1) & \ = \ & Q_{2m-1}(n) + O(\gamma_{2m-1}^n).
      \end{eqnarray}
      By induction we may assume further that $C_{2m-2}=(2m-3)!!\cdot C_\sigma^{m-1}$.
      First, we compute $\tilde\mu_n(2m)$.
      Define a sequence of polynomials $\{R_n(x)\}$ via
      \begin{equation}
        \label{eq:tilde-beta-def}
        R_n(x)
          \ \defeq\  \sum_{\ell=1}^{2m}\sum_{i=1}^{i_0}\sum_{j=0}^{j_0}
            \frac{\Omega_{n-i}t_{i,j}}{\Omega_n}
            \cdot(j+\mu_{n-i}-\mu_n)^\ell
            \cdot Q_{2m-\ell}(x-i).
      \end{equation}
      Furthermore, set
      \begin{equation}
        s_{i,n}\ = \ \frac{\Omega_{n-i}}{\Omega_n}\sum_{j=0}^{j_0}t_{i,j}
        \quad\text{and}\quad
        \bar s_i \ = \  \frac{1}{\lambda_1^i} \sum_{j=0}^{j_0}t_{i,j}.
      \end{equation}
      Then
      \begin{align}
        \tilde\mu_n(2m)
          \ = \  \sum_{i=1}^{i_0}s_{i,n}\tilde\mu_{n-i}(2m) + R_n(n).
      \end{align}
      Note that $R_n(x)$ is the sum of finitely many
      polynomials that,
      by Lemma \ref{fact:exponential-convergence-arithmetic},
      converges exponentially quickly.
      Thus $R_n(x)$ converges exponentially quickly to
      \begin{equation}
        \label{eq:bar-R-definition}
        \bar R(x)
          \ \defeq\  \sum_{\ell=1}^{2m}\sum_{i=1}^{i_0}\sum_{j=0}^{j_0}
            \frac{t_{i,j}}{\lambda_1^i}
            \cdot(j-C_\mu i)^\ell
            \cdot Q_{2m-\ell}(x-i).
      \end{equation}
      Furthermore, we have $\deg \bar R(x)\le m-1$
      since each $R_n(x)$ has degree at most $m-1$.
      We can compute the leading coefficient of $R$
      using \eqref{eq:bar-R-definition} to get
      \begin{align}
        \label{eq:tilde-beta-leading-coefficient}
        [x^{m-1}](\bar R(x))
          &\ = \  \sum_{\ell=1}^{2m}\sum_{i=1}^{i_0}\sum_{j=0}^{j_0}
            \binom{2m}{\ell}
            \frac{t_{i,j}}{\lambda_1^i}
            \cdot(j-C_\mu i)^\ell
            \cdot [x^{m-1}](Q_{2m-\ell}(x-i)) \nonumber\\
          &\ = \  \sum_{\ell=1}^{2}\sum_{i=1}^{i_0}\sum_{j=0}^{j_0}
            \binom{2m}{\ell}
            \frac{t_{i,j}}{\lambda_1^i}
            \cdot(j-C_\mu i)^\ell
            \cdot C_{2m-\ell}
            \nonumber\\
          &\ = \
            C_{2m-2} \cdot \binom{2m}{2} \sum_{i=1}^{i_0}\sum_{j=0}^{j_0} \frac{t_{i,j}}{\lambda_1^i} \cdot(j-C_\mu i)^2
            + C_{2m-1} \cdot 2m\sum_{i=1}^{i_0}\sum_{j=0}^{j_0} \frac{t_{i,j}}{\lambda_1^i} \cdot(j-C_\mu i)
            \nonumber\\
          &\ = \
            C_{2m-2} \cdot \binom{2m}{2} \sum_{i=1}^{i_0}\sum_{j=0}^{j_0} \frac{t_{i,j}}{\lambda_1^i} \cdot(j-C_\mu i)^2
            + C_{2m-1} \cdot 2m\cdot 0 \nonumber\\
          &\ = \
            C_{2m-2} \cdot\binom{2m}{2} \sum_{i=1}^{i_0}\sum_{j=0}^{j_0} \frac{t_{i,j}}{\lambda_1^i} \cdot(j-C_\mu i)^2.
      \end{align}
      Recalling the definition
      \begin{equation}
        C_\sigma
          \ = \ \frac{\sum_{i=1}^{i_0}\sum_{j=0}^{j_0}
            \frac{t_{i,j}}{\lambda_1^i}
            \cdot(j-C_\mu i)^2 }
            {\sum_{i=1}^{i_0}\sum_{j=0}^{j_0}\frac{t_{i,j}\cdot i}{\lambda_1^i}}
          \ = \ \frac{\sum_{i=1}^{i_0}\sum_{j=0}^{j_0}
            \frac{t_{i,j}}{\lambda_1^i}
            \cdot(j-C_\mu i)^2 }
            {\sum_{i=1}^{i_0}i\cdot \bar s_i},
      \end{equation}
      we have
      \begin{align}
        [x^{m-1}](\bar R(x))
          &\ = \ C_{2m-2} \cdot \binom{2m}{2}\cdot C_\sigma\cdot \pa{\sum_{i=1}^{i_0}i\cdot\bar s_i}.
      \end{align}
      By Lemma \ref{lem:key-technical-polyn}, there exists a degree $\deg\bar R(x) + 1$ polynomial $Q_{2m}(x)$ with $x^m$ coefficient $C_{2m}$ and a $\gamma_{2m}\in(0,1)$ such that
      \begin{align}
        \mu_n(2m) &\ = \  Q_{2m}(n) + O(\gamma_{2m}^n)
      \end{align}
      and
      \begin{align}
        \label{eq:C_2m-calculation}
        C_{2m}
          \ = \
            \frac{C_{2m-2} \cdot \binom{2m}{2}\cdot C_\sigma\cdot \pa{\sum_{i=1}^{i_0}i\cdot\bar s_i}}
            {m\cdot\sum_{i=1}^{i_0}i\cdot\bar s_i}
          \ = \ C_{2m-2}\cdot (2m-1)\cdot C_\sigma.
      \end{align}
      By the inductive hypothesis, we conclude $C_{2m} = (2m-1)!!\cdot C_\sigma^m$.
      By our technical assumption, $C_\sigma\neq 0$, so $C_{2m}\neq 0$ and thus the degree of $Q_{2m}$ is exactly $m$.

      We can perform the same computation to show that
      the $\tilde\mu_n(2m+1)$ can be expressed as
      the sum of an $m$\textsuperscript{th} degree polynomial in $n$
      and an exponentially small term.
      To see this, define a sequence of polynomials $\{R_n(x)\}$ via
      \begin{equation}
        \label{eq:tilde-beta-def-2}
        R_n(x)
          \ \defeq\  \sum_{\ell=1}^{2m+1}\sum_{i=1}^{i_0}\sum_{j=0}^{j_0}
            \frac{\Omega_{n-i}t_{i,j}}{\Omega_n}
            \cdot(j+\mu_{n-i}-\mu_n)^\ell
            \cdot Q_{2m+1-\ell}(x-i).
      \end{equation}
      Just as for the $2m$\textsuperscript{th} moments, set
      \begin{equation}
        s_{i,n}\ = \ \frac{\Omega_{n-i}}{\Omega_n}\sum_{j=0}^{j_0}t_{i,j}.
      \end{equation}
      Then
      \begin{align}
        \tilde\mu_n(2m+1)
          \ = \  \sum_{i=1}^{i_0}s_{i,n}\tilde\mu_{n-i}(2m+1) + R_n(n).
      \end{align}
      Note that $R_n(x)$ is the sum of finitely many
      polynomials that,
      by Lemma \ref{fact:exponential-convergence-arithmetic},
      converge exponentially quickly.
      Thus $R_n(x)$ converges exponentially quickly to
      \begin{equation}
        \label{eq:bar-R-definition-2}
        \bar R(x)
          \ \defeq\  \sum_{\ell=1}^{2m+1}\sum_{i=1}^{i_0}\sum_{j=0}^{j_0}
            \frac{t_{i,j}}{\lambda_1^i}
            \cdot(j-C_\mu i)^\ell
            \cdot Q_{2m+1-\ell}(x-i).
      \end{equation}
      Furthermore, we have
      $\deg \bar R(x)\le m-1$.
      Indeed, $Q_{2m}$ has degree $m$, so to show that
      $\deg \bar R(x)\le m-1$, we simply need to show
      that the coefficient of $x^m$ is 0.
      Indeed, looking at the $x^m$ coefficients of \eqref{eq:bar-R-definition-2} gives
      \begin{align}
        \label{eq:tilde-beta-leading-coefficient-2}
        [x^m](\bar R(x))
          &\ = \  \sum_{\ell=1}^{2m}\sum_{i=1}^{i_0}\sum_{j=0}^{j_0}
            \binom{2m}{\ell}
            \frac{t_{i,j}}{\lambda_1^i}
            \cdot(j-C_\mu i)^\ell
            \cdot [x^{m}](Q_{2m+1-\ell}(x-i)) \nonumber\\
          &\ = \ C_{2m}\cdot\sum_{i=1}^{i_0}\sum_{j=0}^{j_0}
            2m\cdot
            \frac{t_{i,j}}{\lambda_1^i}
            \cdot(j-C_\mu i)^1
            \ = \  C_{2m}\cdot 2m\cdot 0\ = \ 0.
      \end{align}
      The second to last equality follows from the definition of $C_\mu$ in \ref{eq:C_mu-and-C_sigma}.
      Again, applying Lemma \ref{lem:key-technical-polyn} gives that there exists a degree $\deg\bar R(x) + 1$ polynomial $Q_{2m+1}(x)$ such that $\tilde\mu_n(2m+1)=Q_{2m+1}(n)+O(\gamma_{2m+1}^n)$.
      Since $\deg\bar R(x) + 1\le m$, this completes the induction.
    \end{proof}

  \begin{proof}[Proof of Theorem \ref{thm:2d-recursion}]
    Lemma \ref{lem:mu-calculation} proves the first part of Theorem \ref{thm:2d-recursion}.
    Lemma \ref{lem:moments-calculation} implies that $\sigma_n^2=\tilde\mu_n(2)=Q_2(n)+O(\gamma_2^n)$. Writing $Q_2(n) = C_\sigma n + d_\sigma$ for some $d_\sigma\in\RR$, we have $\sigma_n^2=C_\sigma n + d_\sigma + O(\gamma_2^n)$, proving the second part of Theorem \ref{thm:2d-recursion}.
    We finish the proof of Theorem \ref{thm:2d-recursion}
    by noting that \eqref{eq:moments-gaussian-condition}
    is an immediate consequence of Lemma \ref{lem:moments-calculation}.
  \end{proof}


\section{Gap Theorems}\label{sec:counting-gaps}


\subsection{Gap Recurrence}\label{sec:gap-recurrence}

We start by finding a recurrence relation for an $M \in [G_n, G_{n+1})$ having exactly $k$ gaps of size $g$.
Recall that $k_g(M)$ denotes the number of gaps of size $g$ in the Zeckendorf Decomposition of $M$.

  \begin{lemma}
    \label{lem:zeck-num-gap-count}
    Let $\{G_n\}$ be a positive linear recurrence with recurrence relation
    \begin{equation}
      G_n\ = \ c_1G_{n-1}+\cdots+c_LG_{n-L}
    \end{equation}
    and $c_i>0$ for all $i$.
    Slightly abusing notation (reusing the letter $p$), let
    \begin{align}
      p_{g,n,k}&\ \defeq\  |\{M\in[G_n,G_{n+1}): k_g(M)\ = \ k\}|.
    \end{align}
    Define $d_0=0$ and $d_i=c_1+c_2+\cdots+c_i$ for $1\le i\le L$
    and set $c_i^*=c_i$ for $1\le i < L$ and $c_L^*=c_L - 1$.
    Then there exists $n_0=L+g$ and $k_0 = d_L$ such that,
    for $n\ge n_0, k\ge k_0, g\ge 2$, we have
    \begin{align}
      \label{eq:p_gnk}
      p_{0,n,k}
        &\ = \  \sum_{i=1}^{L}\sum_{j=1}^{c_{i}-1}p_{0,n-i,k-(d_{i-1}-(i-1)+(j-1))}
         + \sum_{i=1}^{L}p_{0,n-i,k-(d_{i-1}-(i-1))}
         \nonumber\\
      p_{1,n,k}
        &\ = \  p_{1,n-1,k} + \sum_{i=1}^{L}(c_{i}-1)p_{1,n-i,k-(i-1)} + \sum_{i=2}^{L}p_{1,n-i,k-(i-2)} \nonumber\\
        &\quad + \sum_{i=1}^{L}(c_{i}-1)\pa{
            \pa{p_{1,n-i,k-i} - p_{1,n-i,k-(i-1)}}
            - \pa{p_{1,n-i-1,k-i} - p_{1,n-i-1,k-(i-1)}}
           } \nonumber\\
      p_{g,n,k}
        &\ = \  \sum_{i=1}^{L}c_{i}p_{g,n-i,k}
          + \sum_{i=1}^{L}c_{i}^*
            \pa{
              \pa{p_{g,n+1-i-g,k-1} - p_{g,n+1-i-g,k}}
              - \pa{p_{g,n-i-g,k-1} - p_{g,n-i-g,k}}
            }.
    \end{align}
  \end{lemma}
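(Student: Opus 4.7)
The plan is to partition $M\in[G_n,G_{n+1})$ according to the top-block structure of its unique Zeckendorf decomposition, count the size-$g$ gaps contributed by each piece, and then algebraically massage the resulting sum into the stated form. By Theorem~\ref{thm:general-zeck}, for $n\ge L$ every such $M$ has a unique representation
\[
M\ =\ c_1G_n+c_2G_{n-1}+\cdots+c_{s-1}G_{n-s+2}+a_sG_{n-s+1}+M'',
\]
with $s\in\{1,\dots,L\}$ the first position at which $a_s<c_s$, $a_s\in\{0,\dots,c_s-1\}$ (and $a_s\ge 1$ when $s=1$), and $M''\in[0,G_{n-s+1})$ itself legal. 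I would partition by the pair $(s,a_s)$ into Case~A ($a_s\ge 1$, top block ending at position $n-s+1$) and Case~B ($a_s=0$, $s\ge 2$, top block ending at position $n-s+2$).

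For each case, the size-$g$ gap count of $M$ is the sum of the size-$g$ gaps internal to the top block plus a single interface gap between the bottom summand of the top block and the top summand of $M''$, present when $M''\ne 0$. When $g\ge 2$ the internal count is $0$ since internal top-block gaps only have size $0$ (within a coefficient) or $1$ (between consecutive full positions); the interface equals $g$ iff $M''$'s top index takes the specific value $n-s+1-g$ in Case~A (and $n-s+2-g$ in Case~B, which is outside the admissible range when $g=1$). When $g=1$ the internal count is $s-1$ or $s-2$, plus a possible size-$1$ interface in Case~A at $n''=n-i$. When $g=0$ the internal count is $d_{s-1}-(s-1)+\max(a_s-1,0)$ and the interface is never of size $0$. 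Summing the $(s,a_s)$ contributions I obtain an initial identity expressing $p_{g,n,k}$ in terms of the cumulative counts $P_{g,m,k}\defeq\#\{M''\in[0,G_m):k_g(M'')=k\}$, plus interface corrections of shape $p_{g,n^\ast,k-1}-p_{g,n^\ast,k}$ for $g\ge 1$.

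To convert the cumulative $P_{g,m,k}$'s into the pointwise $p_{g,n-i,k}$'s appearing in the statement, I would apply the identity a second time at index $n-1$ and subtract. The telescoping $P_{g,n-i+1,k'}-P_{g,n-i,k'}=p_{g,n-i,k'}$ directly produces the $\sum_i c_ip_{g,n-i,k}$ part (or its $g=0,1$ analogue) from the cumulative sums, while the difference of the interface corrections at $n$ and $n-1$ produces exactly the telescoping expression $\sum_i c_i^\ast\bigl[(p_{g,n+1-i-g,k-1}-p_{g,n+1-i-g,k})-(p_{g,n-i-g,k-1}-p_{g,n-i-g,k})\bigr]$ for $g\ge 2$, and the more elaborate four-term bracket displayed in the statement for $g=1$. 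The asymmetry $c_L^\ast=c_L-1$ emerges naturally because Case~B is unavailable at $s=L+1$, leaving one option missing at the boundary; for $g=0$ the leading $p_{g,n-1,k}$ arising from the shift identity is precisely the $i=1$ term of the second sum in the statement, since $d_0-0=0$.

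The step that will require the most care is the final telescoping for $g=1$: both the Case~A interface correction and the internal top-block count $s-1$ depend on $i$, so after shifting and subtracting one must verify that several cancellations produce exactly the displayed four-term bracket. Tracking the edge cases $i=1$ (where Case~B is absent) and $i=L$ (where Case~B at $s=L+1$ is absent) is also where the constants $c_i^\ast$ acquire their exceptional value at $i=L$; the hypotheses $n\ge L+g$ and $k\ge d_L$ are used precisely to keep every interface index $n^\ast$ and every shifted argument $k-\Delta$ in the valid range throughout this manipulation.
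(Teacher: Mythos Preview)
Your plan is correct and is essentially the paper's own argument: the paper partitions $[G_n,G_{n+1})$ into the intervals $I_{n,i,j}=[H_{n,i}+jG_{n-i},H_{n,i}+(j+1)G_{n-i})$ (your Case~A is $j\ge 1$, Case~B is $j=0$), expresses each contribution via the cumulative counts $q_{g,m,k}=\#\{M\in[1,G_m):k_g(M)=k\}$ (your $P_{g,m,k}$) with the same interface corrections, and then subtracts the identity at $n-1$ using $q_{g,m,k}-q_{g,m-1,k}=p_{g,m-1,k}$ to obtain the displayed recursions. Your explanation of the origin of $c_L^\ast=c_L-1$ and of the delicate $g=1$ bookkeeping matches what happens in the paper's detailed computation.
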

  \begin{proof}
    Define
    \begin{align}
      q_{g,n,k}\ \defeq\  |\{M\in[1,G_n): k_g(M)=k\}| \ = \  \sum_{i=1}^{n-1} p_{g,i,k};
    \end{align}
    thus while $p_{g,n,k}$ is the number of $M$ in $[G_n,G_{n+1})$ such that $k_g(M)=k$, $q_{g,n,k}$ is the corresponding quantity for integers in $[1, G_n)$.
    Set $H_{n,0}=0$ and $H_{n,i} = \sum_{i'=1}^i c_{i'}G_{n+1-i'}$ so that, for all $n$, $H_{n,L} = G_{n+1}$.
    Let
    \begin{equation}
      \label{eq:Z-def}
      Z \ = \ \{(i,j)\in\ZZ^2:0\le i\le L-1, 0\le j \le c_{i+1}-1, (i,j)\neq(0,0)\}.
    \end{equation}
    For $n\in\NN$ and $(i,j)\in Z$, let $I_{n,i,j} = [H_{n,i} + jG_{n-i}, H_{n,i} + (j+1)G_{n-i})$ be an interval of integers.
    The $c_1+c_2+\cdots+c_L-1$ intervals $\{I_{n,i,j}:(i,j)\in Z\}$ form a partition of $[G_n,G_{n+1})$, and the sequential order of these intervals is equal to their lexicographical order by $(i,j)$.
    For each $(i,j)\in Z$, we can express $|\{M\in I_{n,i,j}:k_g(M)=k\}$ in terms of $p_{g,n,k}$ and $q_{g,n,k}$ with smaller values of $n$. This is done by case work on whether the smallest term in $H_{n,i}+jG_{n-i}$ (either $G_{n+1-i}$ or $G_{n-i}$ depending on whether $j=0$) is part of a gap of size $g$:
    \begin{align}
      \label{eq:zeck-num-gap-count-intervals}
      |\{M\in I_{n,i,0}:k_0(M)=k\}|
        &\ = \  q_{0,n-i,k-(d_i-i)} \nonumber\\
      |\{M\in I_{n,i,0}:k_1(M)=k\}|
        &\ = \  q_{1,n-i,k-(i-1)} \nonumber\\
      |\{M\in I_{n,i,0}:k_g(M)=k\}|
        &\ = \  q_{g,n-i,k} + p_{g,n+1-i-g,k-1} - p_{g,n+1-i-g,k} \nonumber\\
      |\{M\in I_{n,i,j}:k_0(M)=k\}|
        &\ = \  q_{0,n-i,k-(d_i-i+(j-1))} \nonumber\\
      |\{M\in I_{n,i,j}:k_1(M)=k\}|
        &\ = \  q_{1,n-i,k-i} + p_{g,n-i-1,k-(i+1)} - p_{g,n-i-1,k-i} \nonumber\\
      |\{M\in I_{n,i,j}:k_g(M)=k\}|
        &\ = \  q_{g,n-i,k} + p_{g,n-i-g,k-1}-p_{g,n-i-g,k}
    \end{align}
    (see Appendix \ref{app:zeck-num-gap-count-1} for details).
    These formulas are clean because the number of size-$g$ gaps in an $M=H_{n,i}+jG_{n-i}+M'\in I_{n,i,j}$ is simply the number of size-$g$ gaps in $H_{n,i}+jG_{n-i}$ plus the number of size-$g$ gaps in $M'$ plus possibly one more gap between the two decompositions.
    By definition, for $g\ge0$ we have
    \begin{align}
      p_{g,n,k}
        &\ = \  \sum_{(i,j)\in Z}|\{M\in I_{n,i,j}:k_g(M)=k\}|.
    \end{align}
    From this equation, we can substitute from \eqref{eq:zeck-num-gap-count-intervals},
    plug in the results for $p_{g,n,k}$ and $p_{g,n-1,k}$
    in the expression $p_{g,n,k}-p_{g,n-1,k}$,
    use the identity $q_{g,n,k} - q_{g,n-1,k} = p_{g,n-1,k}$,
    and apply straightforward manipulations
    to obtain the desired result (see Appendix \ref{app:zeck-num-gap-count-2}
    for calculations).
  \end{proof}


\subsection{Proof of Gap Theorems}\label{sec:gap-thms}

Lemma \ref{lem:zeck-num-gap-count} allows us to apply  Theorem \ref{thm:2d-recursion} to the distribution of the number of  fixed sized gaps.
The proof is essentially verifying that the conditions of Theorem \ref{thm:2d-recursion} are met by our gap recurrences.

  \begin{proof}[Proofs of Theorems \ref{thm:gap-lek}, \ref{thm:gap-var}, and
  \ref{thm:gap-gauss}]
    Recall that $k_g(M)$ denotes the number of gaps of size $g$ in the Zeckendorf Decomposition of $M$.
    Let
    \begin{align}
      p_{g,n,k}\ \defeq\  |\{M\in[G_n,G_{n+1}): k_g(M)=k\}|
    \end{align}
    and let $i_0=L+g$, $j_0=d_L$. By Lemma \ref{lem:zeck-num-gap-count}, for every $g\ge 0$, there exist $t_{i,j}$ for $1\le i\le L+g$ and $0\le j\le d_L$ such that for $n>i_0$
    \begin{equation}
      p_{g,n,k}\ = \ \sum_{i=1}^{i_0}\sum_{j=0}^{j_0}t_{i,j}p_{g,n-i,k-j}.
    \end{equation}
    Define $\hat t_i = \sum_{j=0}^{j_0}t_{i,j}$.
    Note that in each recursive formula of \eqref{eq:p_gnk} the terms of the form $p_{g,n-x,y_1} - p_{g,n-x,y_2}$ contribute 0 to $\sum_{j=0}^{j_0}t_{x,j}$, and for each $0\le i\le L-1$ the remaining coefficients of $p_{g,n-i-1,k}$ (over varying $k$) sum to $c_{i+1}$.
    From this we conclude $\hat t_i = c_i$ for $1\le i\le L$ and $\hat t_i = 0$ for $L<i\le i_0$.
    Thus the polynomial
    \be
      T(x)=x^{i_0}-\sum_{i=1}^{i_0}\hat t_ix^{i_0-i} = x^{i_0-L}\pa{x^{L}-\sum_{i=1}^{L}c_ix^{L-i}}
    \ee
    has the maximum root property with some maximum root $\lambda_1>1$
    by Theorem \ref{thm:plrs-maximum-root}.
    Also $\sum_{k=0}^np_{g,n,k} = G_{n+1} - G_n=\Theta(\lambda_1^n)$
    by Theorems \ref{thm:general-binet} and \ref{thm:plrs-maximum-root}.
    As $p_{g,n,k}$ counts something that is well defined when $n\ge 1$ and $k\ge 0$, we have $p_{g,n,k}\ge 0$ for all $n,k$ and $p_{g,n,k}=0$ for $n<0$ or $k<0$.
    Also $p_{g,n,k}=0$ for all $k\ge n$, since no $M\in[G_n,G_{n+1})$ can have a gap greater than $n$.
    Thus there are finitely many pairs $(n,k)$ with $n\le i_0$ such that $p_{g,n,k}\neq 0$.
    Lastly, for every $g$, if the random variable $K_{g,n}$ is nontrivial then the $t_{i,j}$ satisfy
    \begin{equation}
      \label{eq:C_mu/sigma-2}
      C_\mu \ \defeq\  \frac{\sum_{i=1}^{i_0}\sum_{j=0}^{j_0} \frac{t_{i,j}\cdot j}{\lambda_1^i}}{\sum_{i=1}^{i_0}\sum_{j=0}^{j_0} \frac{t_{i,j}\cdot i}{\lambda_1^i}}, \ \ \  C_\sigma \ \defeq\  \frac
            {\sum_{i=1}^{i_0}\sum_{j=0}^{j_0}
            \frac{t_{i,j}}{\lambda_1^i}
            \cdot(j-C_\mu i)^2 }
            {\sum_{i=1}^{i_0}\sum_{j=0}^{j_0}\frac{t_{i,j}\cdot i}{\lambda_1^i}}.
    \end{equation}
    To prove each of $C_\mu>0$ and $C_\sigma>0$, we split into cases on whether $g=0$, $g=1$, or $g\ge 2$.
    For each case we substitute into \eqref{eq:C_mu/sigma-2} and perform standard manipulations (see Appendix \ref{app:C_mu/sigma-positive}).
    Putting these observations together, the proofs follow by applying Theorem \ref{thm:2d-recursion}.
  \end{proof}


\section{Lekkerkerker and Gaussian Summands}
  \label{sec:summands-thms}
  We show the power of Theorem \ref{thm:2d-recursion} by reproving
  Theorems \ref{thm:summands-lek}, \ref{thm:summands-var}, and \ref{thm:summands-gauss}.
  We borrow from the proof given by Miller and Wang \cite{MW1}
  the recursion established for $p_{n,k}$, the number of $M\in[G_n,G_{n+1})$
  with exactly $k$ summands.
  This recursion is extracted as \eqref{eq:num-summands-recursion} from generating functions in \cite{MW1}.
  This recursion can also be found using techniques in the proof of Lemma \ref{lem:zeck-num-gap-count} and the casework for number of summands is simpler than for gaps.
  Miller and Wang's arguments quickly show the mean and variance grow linearly in $n$, but a lot of technical calculations are needed to show the linear coefficients are positive (which is a key ingredient in the proof of the Gaussian behavior). See \cite{CFHMNPX} for another approach, which bypasses the difficulties through an elementary argument involving conditional probabilities, or \cite{B-AM} for a proof through Markov processes.

Similar to \S\ref{sec:gap-thms}, the proof is essentially verifying that the conditions of Theorem \ref{thm:2d-recursion} are met by the summands recursion given by Miller and Wang.

  \begin{proof}[Proofs of Theorems \ref{thm:summands-lek}, \ref{thm:summands-var}, and
  \ref{thm:summands-gauss}]
    Let $p_{n,k}$ be the number of $M\in[G_n,G_{n+1})$ with exactly $k$ summands.
    Then $\Pr[K_{\Sigma,n}=k] = \frac{p_{n,k}}{\sum_{k=0}^\infty p_{n,k}}$.
    Again, $p_{n,k}\ge 0$ for all $n,k$, $p_{n,k}=0$ for all $n<0$ and $k<0$, and $p_{n,k}>0$ for finitely many pairs with $n<L$ as $p_{n,k}=0$ for all $k>n\cdot\max_i(c_i)$, since each $M$ has, for each $a\in\{1,\dots,n\}$, at most $\max_i(c_i)$ copies of $G_a$ in each decomposition.

    Define $d_i=c_1+c_2+\cdots+c_i$ for $1\le i\le L$.
    By Proposition 3.1 from \cite{MW2}, $p_{n,k}$
    satisfies, for $n\ge L$ and $k\ge d_L$,
    \begin{equation}
      \label{eq:num-summands-recursion}
      p_{n,k} \ = \  \sum_{i=1}^L\sum_{j=d_{m-1}}^{d_m-1}p_{n-i,k-j}.
    \end{equation}
    For $1\le i\le L$, $0\le j< d_L$, set $t_{i,j}$ to be $1$ if $d_{i-1}\le j < d_i-1$ and 0 otherwise.
    Defining $\hat t_i\defeq\sum_{j=0}^{d_L-1}t_{i,j}$ gives $\hat t_i = c_i$, and the polynomial
    \be
      T(x)\ =\ x^{i_0}-\sum_{i=1}^{i_0}\hat t_ix^{i_0-i}\ =\ x^{i_0-L}\pa{x^{L}-\sum_{i=1}^{L}c_ix^{L-i}}
    \ee
    has the maximum root property with some maximum root $\lambda_1>1$ by Theorem \ref{thm:plrs-maximum-root}.
    Also $\sum_{k=0}^np_{g,n,k} = G_{n+1} - G_n=\Theta(\lambda_1^n)$
    by Theorems \ref{thm:general-binet} and \ref{thm:plrs-maximum-root}.
    Lastly, since all the $t_{i,j}$ are nonnegative and $t_{n-L, k-(d_L-1)}=1$ with $k-(d_L-1) > 0$, \eqref{eq:C_mu-and-C_sigma} tells us
    \begin{equation}
      C_\mu \ = \  \frac{\sum_{i=1}^{i_0}\sum_{j=0}^{j_0} \frac{t_{i,j}\cdot j}{\lambda_1^i}}
                   {\sum_{i=1}^{i_0}\sum_{j=0}^{j_0} \frac{t_{i,j}\cdot i}{\lambda_1^i}}
            \ \ge \ \frac{\frac{k-(d_L-1)}{\lambda_1^{n-L}}}
                   {\sum_{i=1}^{i_0}\sum_{j=0}^{j_0} \frac{t_{i,j}\cdot i}{\lambda_1^i}}
            \ > \ 0.
    \end{equation}
    Since $t_{1,0} = 1$ and all the $t_{i,j}$ are nonnegative, we have
    \begin{equation}
      C_\sigma
          \ = \  \frac
            {\sum_{i=1}^{i_0}\sum_{j=0}^{j_0}
            \frac{t_{i,j}}{\lambda_1^i}
            \cdot(j-C_\mu i)^2 }
            {\sum_{i=1}^{i_0}\sum_{j=0}^{j_0}\frac{t_{i,j}\cdot i}{\lambda_1^i}}
              \ \ge \  \frac {\frac{t_{1,0}}{\lambda_1^1}
                \cdot(0-C_\mu 1)^2 }
                {\sum_{i=1}^{i_0}\sum_{j=0}^{j_0}\frac{t_{i,j}\cdot i}{\lambda_1^i}}
              \ > \ 0.
    \end{equation}
    Thus we can apply Theorem \ref{thm:2d-recursion}, implying the theorems.
  \end{proof}


\section{Further Work and Open Questions}

We end with a few natural questions for future work.

\begin{enumerate}
  \item Are there other two-dimensional recurrences to which we can apply our central limit type result? The second named author is currently investigating two dimensional sequences and associated notions of legality with colleagues. These lead to recurrence relations,  though the resulting sequences do not have unique decomposition.

  \item Can one remove the constraint that every coefficient $c_i$ must be positive and obtain the same results? Notice that with negative constraints one loses some of the interpretations for the algebra.

  \item What is the rate at which $K_{g,n}$ converges to a normal distribution?
\end{enumerate}


\appendix


\section{$b_n$ is bounded in Lemma \ref{lem:key-technical}}
\label{app:bn-bound}
  Since $s_{i,n}=\bar s_i + O(\gamma^n)$,
  there exist $\alpha_1,\alpha_2 > 0, n_0\in \NN$ such that,
  for all $n\ge n_0+i_0$,
  \begin{align}
    \label{eq:bn-bound-1}
    |b_n|
      &\ = \  \abs{\sum_{i=1}^{i_0}(\bar s_i+O(\gamma^n))b_{n-i} + O(\gamma^n)} \nonumber\\
      &\ \le \  \sum_{i=1}^{i_0}(\bar s_i+\alpha_1\gamma^n)|b_{n-i}| +
      \alpha_2\gamma^n\nonumber\\
      &\ \le \  (1 + i_0\alpha_1\gamma^n)\abs{\max_{1\le i\le i_0}b_{n-i}} +
      \alpha_2\gamma^n\nonumber\\
      &\ \le \  (1 + (i_0\alpha_1+\alpha_2)\gamma^n)
        \abs{\max\pa{1,\max_{1\le i\le i_0}b_{n-i}}} \nonumber\\
      &\ \le \  e^{(i_0\alpha_1+\alpha_2)\gamma^n}
        \abs{\max\pa{1,\max_{1\le i\le i_0}b_{n-i}}}.
  \end{align}
  Let $B=\max_{0\le i< i_0}|b_{n_0+i}|$.
  We prove by induction that
  \begin{equation}
    |b_n|
      \ \le \  (B+1)e^{(i_0\alpha_1+\alpha_2)
          \pa{\gamma^{n_0}+\gamma^{n_0+1}+\cdots+\gamma^n}}.
  \end{equation}
  For $n< n_0+i_0$, we have
  \begin{equation}
    |b_n|
      < (B+1)
      \ \le \  (B+1)e^{(i_0\alpha_1+\alpha_2)
                 (\gamma^{n_0}+\gamma^{n_0+1}+\cdots+\gamma^n)}.
  \end{equation}
  Now assume $n\ge n_0+i_0$, and suppose the assertion is true for $n'< n$.
  Then, by \eqref{eq:bn-bound-1},
  \begin{align}
    |b_n|
      &\ \le \  e^{(i_0\alpha_1+\alpha_2)\gamma^n}
        \abs{\max\pa{1,\max_{1\le i\le i_0}b_{n-i}}} \nonumber \\
      &\ \le \  e^{(i_0\alpha_1+\alpha_2)\gamma^n}
        \abs{\max\pa{1,\max_{1\le i\le i_0}
                     (B+1)e^{(i_0\alpha_1+\alpha_2)
                     (\gamma^{n_0}+\gamma^{n_0+1}+\cdots+\gamma^{n-i})}
                    }} \nonumber \\
      &\ = \  e^{(i_0\alpha_1+\alpha_2)\gamma^n}
        (B+1)e^{(i_0\alpha_1+\alpha_2)
                     (\gamma^{n_0}+\gamma^{n_0+1}+\cdots+\gamma^{n-1})}
                     \nonumber \\
      &\ \le \  (B+1)e^{(i_0\alpha_1+\alpha_2)
                     (\gamma^{n_0}+\gamma^{n_0+1}+\cdots+\gamma^n)}
  \end{align}
  completing the induction. Thus we have
  \begin{equation}
    |b_n|\ \le \  (B+1)e^{(i_0\alpha_1+\alpha_2)\cdot\frac{\gamma^{n_0}}{1-\gamma}},
  \end{equation}
  so the sequence $\{b_n\}$ is bounded.


\section{Decomposition of $b_n$ into similar sequences \ref{lem:key-technical}}
\label{app:bn-decomposition}

We prove by induction that
\begin{equation}
  b_n \ = \  b\id{\init}_n+\sum_{m=i_0}^\infty b\id{m}_n.
\end{equation}
For $n < i_0$, we have
\begin{equation}
  b_n
  \ = \  b\id{\init}_n
  \ = \  b\id{\init}_n+\sum_{m=i_0}^\infty b\id{m}_n.
\end{equation}
Suppose the statement is true for $n'<n$.
Using the fact that $f(n)=b_n\id{n}$
and the recursive definitions for
$b_n\id{\init}$ and $b_n\id{m}$, we obtain
\begin{align}
  b_n
    &\ = \  f(n) + \pa{\sum_{i=1}^{i_0}\bar s_i b_{n-i}} \nonumber \\
    &\ = \  f(n) + \sum_{i=1}^{i_0}\bar s_i
        \pa{b_{n-i}\id{\init}
            + \sum_{m=i_0}^\infty b_{n-i}\id{m}}
       \nonumber \\
    &\ = \  f(n)
     + \sum_{i=1}^{i_0}\bar s_i b_{n-i}\id{\init}
     + \sum_{i=1}^{i_0}\sum_{m=i_0}^\infty \bar s_i b_{n-i}\id{m}
     \nonumber \\
    &\ = \  f(n)
     + \sum_{i=1}^{i_0}\bar s_i b_{n-i}\id{\init}
     + \sum_{m=i_0}^{n-1}\sum_{i=1}^{i_0}\bar s_i b_{n-i}\id{m}
     + \sum_{m=n}^{\infty}\sum_{i=1}^{i_0}\bar s_i b_{n-i}\id{m}
     \nonumber \\
    &\ = \  b_n\id{n} + b_n\id{\init} + \sum_{m=i_0}^{n-1} b_n\id{m} +
      \sum_{m=n}^\infty\sum_{i=1}^{i_0}\bar s_i\cdot 0
      \nonumber \\
    &\ = \  b_n\id{\init} + \sum_{m=i_0}^{n} b_n\id{m}
      \nonumber \\
    &\ = \  b_n\id{\init} + \sum_{m=i_0}^{n} b_n\id{m} +
      \sum_{m=n+1}^\infty b_n\id{m}
      \nonumber \\
    &\ = \  b_n\id{\init} + \sum_{m=i_0}^{\infty} b_n\id{m},
\end{align}
completing the induction.


\section{Computing $b_n$ in Lemma \ref{lem:key-technical-polyn}}
\label{app:bn-simplify}
Our goal this section is to simplify \eqref{eq:key-technical-polyn-an} using the substitution $a_n - b_n = C\cdot n^{D+1}$ where $C=\frac{\bar r_D}{(D+1)\sum_{i=1}^{i_0} i\cdot \bar s_i}$.
We compute a recursive formula for $b_n$ to obtain a linear combination of smaller $b_{n-i}$s plus a polynomial in $n$. For any choice of $C$, the resulting coefficient of $n^{D+1}$ in this polynomial is 0, but for our specific choice of $C$, the $n^D$ term also disappears, so the remaining polynomial has degree at most $D-1$.

Let $\gamma_0\in(\max(\gamma_r,\gamma_s),1)$.
Since $a_n = \sum_{i=1}^{i_0}s_{i,n}a_{n-i} + \sum_{j=0}^Dr_{j,n}n^j$,
we can write
\begin{align}
  b_n
    \ = \ &\,\, a_n - C\cdot n^{D+1}
      \nonumber\\
    \ = \ &\,\, a_n - \sum_{i=1}^{i_0} C s_{i,n} n^{D+1}
      \nonumber\\
    \ = \ & \sum_{i=1}^{i_0}s_{i,n}a_{n-i} + \sum_{j=0}^Dr_{j,n}n^j.
\end{align}
Substituting $a_n - b_n = C\cdot n^{D+1}$ gives
\begin{align}
  b_n
    \ = \ & \pa{\sum_{i=1}^{i_0}s_{i,n}\pa{b_{n-i}+C(n-i)^{D+1}}}
     + \sum_{j=0}^Dr_{j,n}n^j
     - \sum_{i=1}^{i_0} C s_{i,n} n^{D+1}.
\end{align}
We now expand $(n-i)^{D+1}$ to get
\begin{align}
  b_n
    \ = \ & \pa{\sum_{i=1}^{i_0}s_{i,n}\pa{b_{n-i}+C
          \cdot\sum_{j=0}^{D+1}(-1)^{D+1-j}\binom{D+1}{j}n^ji^{D+1-j}}}
           + \sum_{j=0}^Dr_{j,n}n^j
           - \sum_{i=1}^{i_0} C s_{i,n} n^{D+1} \nonumber\\
    =& \sum_{i=1}^{i_0}s_{i,n}b_{n-i}
       + \pa{\sum_{j=0}^{D+1}\sum_{i=1}^{i_0}
        C s_{i,n}(-1)^{D+1-j}\binom{D+1}{j}n^ji^{D+1-j}}
          + \sum_{j=0}^Dr_{j,n}n^j
          - \sum_{i=1}^{i_0} C s_{i,n} n^{D+1}.
\end{align}
As $\sum_{i=1}^{i_0}Cs_{i,n}$ is the coefficient of
$n^{D+1}$ in the binomial expansion, we can cancel to get
\begin{align}
  b_n
    \ = \ & \sum_{i=1}^{i_0}s_{i,n}b_{n-i}
       + \pa{\sum_{j=0}^{D}\sum_{i=1}^{i_0}
        C s_{i,n}(-1)^{D+1-j}\binom{D+1}{j}n^ji^{D+1-j}}
          + \sum_{j=0}^Dr_{j,n}n^j.
\end{align}
We can also pull out the $n^D$ terms of the binomial expansions to get
\begin{align}
  b_n
    \ = \ & \sum_{i=1}^{i_0}s_{i,n}b_{n-i}
       + \pa{\sum_{j=0}^{D-1}\sum_{i=1}^{i_0}
        C s_{i,n}(-1)^{D+1-j}\binom{D+1}{j}n^ji^{D+1-j}}
          \nonumber\\
          &+ \sum_{j=0}^Dr_{j,n}n^j
          - \sum_{i=1}^{i_0}C s_{i,n} (D+1) n^D i
          \nonumber\\
    \ = \ & \sum_{i=1}^{i_0}s_{i,n}b_{n-i}
       + \pa{\sum_{j=0}^{D-1}\sum_{i=1}^{i_0}
        C s_{i,n}(-1)^{D+1-j}\binom{D+1}{j}n^ji^{D+1-j}}
          \nonumber\\
          &+ \sum_{j=0}^{D-1}r_{j,n}n^j
            + r_{D,n}n^D
            - C(D+1)n^D \sum_{i=1}^{i_0}s_{i,n}i.
\end{align}
Now we substitute the value of $C$ in. Note
that $C$ is chosen so that
the coefficient of $n^D$ becomes $O(\gamma_0^n)$.
This happens as $\frac{\sum_{i=1}^{i_0}i\cdot s_{i,n}}{\sum_{i=1}^{i_0} i\cdot\bar s_i}$
and $\frac{r_{D,n}}{\bar r_D}$ are of the form
$1+O(\gamma_s^n)$ and $1+O(\gamma_r^n)$ respectively. We find
\begin{align}
  b_n
    \ = \ & \sum_{i=1}^{i_0}s_{i,n}b_{n-i}
       + \pa{\sum_{j=0}^{D-1}\sum_{i=1}^{i_0}
       C s_{i,n}(-1)^{D+1-j}\binom{D+1}{j}n^ji^{D+1-j}}
       \nonumber\\
         &+ \sum_{j=0}^{D-1}r_{j,n}n^j
             + r_{D,n}n^D
              - \frac{\bar r_D\cdot n^D}{\sum_{i=1}^{i_0} i\cdot\bar s_i}
                \sum_{i=1}^{i_0}s_{i,n}\cdot i
          \nonumber\\
    \ = \ & \sum_{i=1}^{i_0}s_{i,n}b_{n-i}
       + \pa{\sum_{j=0}^{D-1}\sum_{i=1}^{i_0}
       C s_{i,n}(-1)^{D+1-j}\binom{D+1}{j}n^ji^{D+1-j}}
       \nonumber\\
         &+ \sum_{j=0}^{D-1}r_{j,n}n^j
            - \bar r_D n^D
              \pa{
                \frac{\sum_{i=1}^{i_0}i\cdot s_{i,n}}{\sum_{i=1}^{i_0} i\cdot\bar s_i}
                - \frac{r_{D,n}}{\bar r_D}
              }
          \nonumber\\
    \ = \ & \sum_{i=1}^{i_0}s_{i,n}b_{n-i}
       + \pa{\sum_{j=0}^{D-1}\sum_{i=1}^{i_0}
       C s_{i,n}(-1)^{D+1-j}\binom{D+1}{j}n^ji^{D+1-j}}
       \nonumber\\
         &+ \sum_{j=0}^{D-1}r_{j,n}n^j
         - \bar r_D n^D\cdot O(\max(\gamma_r,\gamma_s)^n)
          \nonumber\\
    \ = \ & \sum_{i=1}^{i_0}s_{i,n}b_{n-i}
       + \pa{\sum_{j=0}^{D-1}\sum_{i=1}^{i_0}
       C s_{i,n}(-1)^{D+1-j}\binom{D+1}{j}n^ji^{D+1-j}}
       \nonumber\\
         &+ \sum_{j=0}^{D-1}r_{j,n}n^j
         + O(\gamma_0^n)
          \nonumber\\
    \ = \ & \sum_{i=1}^{i_0}s_{i,n}b_{n-i}
       + \sum_{j=0}^{D-1}n^j\cdot
          \ba{\pa{
              \sum_{i=1}^{i_0}C s_{i,n}(-1)^{D+1-j}\binom{D+1}{j}i^{D+1-j}}
              + r_{j,n}}
          + O(\gamma_0^n).
\end{align}


\section{Recursive formula for $\tilde P_{n,m}(x)$ (Equation \eqref{eq:tilde-P_nm})}
  \label{app:tilde-P_nm}

  We wish to prove
  \begin{equation}
    \tilde P_{n,m}(x)
      \ = \  \sum_{i=1}^{i_0}\sum_{j=0}^{j_0}t_{i,j}
        \sum_{\ell=0}^m
          \binom{m}{\ell}(j+\mu_{n-i}-\mu_n)^\ell
            \tilde P_{n-i,m-\ell}(x)\cdot x^{j+\mu_{n-i}-\mu_n}.
  \end{equation}
  The base case $m=0$ is given by \eqref{eq:tilde-P_n0}.
  Now let $m\ge 1$, and suppose
  \begin{align}
    \tilde P_{n,m-1}(x)
      \ = \  \sum_{i=1}^{i_0}\sum_{j=0}^{j_0}t_{i,j}
        \sum_{\ell=0}^{m-1}
          \binom{m-1}{\ell}(j+\mu_{n-i}-\mu_n)^\ell
            \tilde P_{n-i,m-1-\ell}(x)\cdot x^{j+\mu_{n-i}-\mu_n}.
  \end{align}
  Then
  \begin{align}
    \tilde P_{n,m}(x)
      &\ = \  (x\tilde P_{n,m-1}(x))' \nonumber\\
      &\ = \  \pa{
        x\sum_{i=1}^{i_0}\sum_{j=0}^{j_0}t_{i,j}
        \sum_{\ell=0}^{m-1}
          \binom{m-1}{\ell}(j+\mu_{n-i}-\mu_n)^\ell
            \tilde P_{n-i,m-1-\ell}(x)\cdot x^{j+\mu_{n-i}-\mu_n}
      }' \nonumber\\
      &\ = \ \sum_{i=1}^{i_0}\sum_{j=0}^{j_0}t_{i,j}
        \sum_{\ell=0}^{m-1}
          \binom{m-1}{\ell}(j+\mu_{n-i}-\mu_n)^\ell
            \pa{x\tilde P_{n-i,m-1-\ell}(x)\cdot x^{j+\mu_{n-i}-\mu_n}}'
        \nonumber\\
      &\ = \ \sum_{i=1}^{i_0}\sum_{j=0}^{j_0}t_{i,j}
        \sum_{\ell=0}^{m-1}
          \binom{m-1}{\ell}(j+\mu_{n-i}-\mu_n)^\ell
            \bigg[
              \pa{\tilde P_{n-i,m-\ell}(x)\cdot x^{j+\mu_{n-i}-\mu_n}}\nonumber\\
              &\quad+ \pa{x\tilde P_{n-i,m-1-\ell}(x)\cdot (j+\mu_{n-i}-\mu_n)x^{j+\mu_{n-i}-\mu_n-1}}
            \bigg]\nonumber\\
      &\ = \ \sum_{i=1}^{i_0}\sum_{j=0}^{j_0}t_{i,j}
        \sum_{\ell=0}^{m-1}
          \binom{m-1}{\ell}(j+\mu_{n-i}-\mu_n)^\ell
            \tilde P_{n-i,m-\ell}(x) x^{j+\mu_{n-i}-\mu_n}
            \nonumber\\
        &\quad+\sum_{i=1}^{i_0}\sum_{j=0}^{j_0}t_{i,j}
        \sum_{\ell=0}^{m-1}
          \binom{m-1}{\ell}(j+\mu_{n-i}-\mu_n)^{\ell+1}
            \tilde P_{n-i,m-1-\ell}(x) x^{j+\mu_{n-i}-\mu_n}
            \nonumber\\
      &\ = \ \sum_{i=1}^{i_0}\sum_{j=0}^{j_0}t_{i,j}
        \sum_{\ell=0}^{m-1}
          \binom{m-1}{\ell}(j+\mu_{n-i}-\mu_n)^\ell
            \tilde P_{n-i,m-\ell}(x) x^{j+\mu_{n-i}-\mu_n}
            \nonumber\\
        &\quad+\sum_{i=1}^{i_0}\sum_{j=0}^{j_0}t_{i,j}
        \sum_{\ell=1}^{m}
          \binom{m-1}{\ell-1}(j+\mu_{n-i}-\mu_n)^{\ell}
            \tilde P_{n-i,m-\ell}(x) x^{j+\mu_{n-i}-\mu_n}.
  \end{align}
  Taking out the $\ell=0$ term from the first sum
  and the $\ell=m$ term from the latter one,
  and pairing the remaining terms by common $\ell$,
  we obtain
  \begin{align}
    \tilde P_{n,m}(x)
      &\ = \ \sum_{i=1}^{i_0}\sum_{j=0}^{j_0}t_{i,j}
        \sum_{\ell=1}^{m-1}
          \pa{\binom{m-1}{\ell-1}+\binom{m-1}{\ell}}(j+\mu_{n-i}-\mu_n)^\ell
            \tilde P_{n-i,m-\ell}(x) x^{j+\mu_{n-i}-\mu_n}
            \nonumber\\
        &\quad+\sum_{i=1}^{i_0}\sum_{j=0}^{j_0}t_{i,j}
          \binom{m-1}{0}(j+\mu_{n-i}-\mu_n)^{0}
            \tilde P_{n-i,m-0}(x) x^{j+\mu_{n-i}-\mu_n}
            \nonumber\\
        &\quad+\sum_{i=1}^{i_0}\sum_{j=0}^{j_0}t_{i,j}
          \binom{m-1}{m-1}(j+\mu_{n-i}-\mu_n)^{m}
            \tilde P_{n-i,m-m}(x) x^{j+\mu_{n-i}-\mu_n}
            \nonumber\\
      &\ = \ \sum_{i=1}^{i_0}\sum_{j=0}^{j_0}t_{i,j}
        \sum_{\ell=1}^{m-1}
          \binom{m}{\ell}(j+\mu_{n-i}-\mu_n)^\ell
            \tilde P_{n-i,m-\ell}(x) x^{j+\mu_{n-i}-\mu_n}
            \nonumber\\
        &\quad+\sum_{i=1}^{i_0}\sum_{j=0}^{j_0}t_{i,j}
          \binom{m}{0}(j+\mu_{n-i}-\mu_n)^{0}
            \tilde P_{n-i,m-0}(x) x^{j+\mu_{n-i}-\mu_n}
            \nonumber\\
        &\quad+\sum_{i=1}^{i_0}\sum_{j=0}^{j_0}t_{i,j}
          \binom{m}{m}(j+\mu_{n-i}-\mu_n)^{m}
            \tilde P_{n-i,m-m}(x) x^{j+\mu_{n-i}-\mu_n}
            \nonumber\\
      &\ = \ \sum_{i=1}^{i_0}\sum_{j=0}^{j_0}t_{i,j}
        \sum_{\ell=0}^{m}
          \binom{m}{\ell}(j+\mu_{n-i}-\mu_n)^\ell
            \tilde P_{n-i,m-\ell}(x) x^{j+\mu_{n-i}-\mu_n}
  \end{align}
  as desired. This completes the induction.


\section{Details for recursively counting gaps in Lemma \ref{lem:zeck-num-gap-count}}

  \subsection{Computing $|\{M\in[H_{n,i} + jG_{n-i}, H_{n,i}+(j+1)G_{n-i}):k_g(M)=k\}|$}
    \label{app:zeck-num-gap-count-1}
    In this section, we compute a recursive formula for $|\{M\in I_{n,i,j}: k_g(M)=k\}|$. Note that the number of size-$g$ gaps in an $M=H_{n,i}+jG_{n-i}+M'\in I_{n,i,j}$ is simply the number of size-$g$ gaps in $H_{n,i}+jG_{n-i}$ plus the number of size-$g$ gaps in $M'$ plus possibly one more gap between the two decompositions. This observation gives a clean way to produce the desired recursions. We need to be careful in our case work as the number of size-$g$ gaps in $H_{n,i}+jG_{n-i}$ varies depending on whether $g$ is 0, 1, or at least 2, and the the existence of the gap between the smallest term in the decomposition of $H_{n,i}+jG_{n-i}$ and the largest term in the decomposition of $M'$ depends on whether $j$ is nonzero.

    For $j\ge 0$, the decomposition of any element of $I_{n,i,j} = [H_{n,i} + jG_{n-i}, H_{n,i}+(j+1)G_{n-i})$ begins with the decomposition of $H_{n,i}+jG_{n-i}$, and the decomposition of $H_{n,i}+jG_{n-i}$ contains only gaps of sizes 0 and 1 by definition of $H_{n,i}$.
    In particular, when $j=0$, the decomposition of $H_{n,i}$
    contains $d_i-i$ gaps of size 0 and $i-1$ gaps of size 1.
    When $j\ge 1$, the decomposition of $H_{n,i}+jG_{n-i}$
    contains $d_i-i + (j-1)$ gaps of size 0 and $i$ gaps of size 1.
    For $i\ge 1$, this gives
    \begin{align}
      \label{eq:j=0-interval-step-1}
      &|\{M\in[H_{n,i}, H_{n,i}+G_{n-i}):k_0(M)=k\}|\nonumber\\
        &\quad\ = \  |\{M\in[G_{n+1-i},G_{n+1-i}+G_{n-i}):k_0(M)=k-(d_i-i)\}| \nonumber\\
      &|\{M\in[H_{n,i}, H_{n,i}+G_{n-i}):k_1(M)=k\}|\nonumber\\
        &\quad\ = \  |\{M\in[G_{n+1-i},G_{n+1-i}+G_{n-i}):k_1(M)=k-(i-1)\}| \nonumber\\
      &|\{M\in[H_{n,i}, H_{n,i}+G_{n-i}):k_g(M)=k\}|\nonumber\\
        &\quad\ = \  |\{M\in[G_{n+1-i},G_{n+1-i}+G_{n-i}):k_g(M)=k\}|,
    \end{align}
    and for $i\ge 0, j\ge 1$, we have
    \begin{align}
      \label{eq:j-ge-1-interval-step-1}
      &|\{M\in[H_{n,i} + jG_{n-i}, H_{n,i}+(j+1)G_{n-i}):k_0(M)=k\}|\nonumber\\
        &\quad\ = \  |\{M\in[G_{n-i},2G_{n-i}):k_0(M)=k-(d_i - i + (j-1))\}| \nonumber\\
      &|\{M\in[H_{n,i} + jG_{n-i}, H_{n,i}+(j+1)G_{n-i}):k_1(M)=k\}|\nonumber\\
        &\quad\ = \  |\{M\in[G_{n-i},2G_{n-i}):k_1(M)=k-i\}| \nonumber\\
      &|\{M\in[H_{n,i} + jG_{n-i}, H_{n,i}+(j+1)G_{n-i}):k_g(M)=k\}|\nonumber\\
        &\quad\ = \  |\{M\in[G_{n-i},2G_{n-i}):k_g(M)=k\}|.
    \end{align}
    We can further push \eqref{eq:j=0-interval-step-1}
    by noting that, for $M\in[G_{n+1-i},G_{n+1-i}+G_{n-i})$,
    the decomposition of $M$ begins with $G_{n+1-i}$, and
    furthermore $G_{n+1-i}$ is not a part of a gap of size 0 or 1.
    Thus
    \begin{align}
      \label{eq:j=0-interval-step-2}
      |\{M\in\,&I_{n,i,0}:k_0(M)=k\}| \nonumber\\
        &\ = \  |\{M\in[H_{n,i}, H_{n,i}+G_{n-i}):k_0(M)=k\}| \nonumber\\
        &\ = \  |\{M\in[G_{n+1-i},G_{n+1-i}+G_{n-i}):k_0(M)=k-(d_i-i)\}| \nonumber\\
        &\ = \  |\{M\in[0,G_{n-i}):k_0(M)=k-(d_i-i)\}| \nonumber\\
        &\ = \  q_{0,n-i,k-(d_i-i)} \nonumber\\
      |\{M\in\,&I_{n,i,0}:k_1(M)=k\}| \nonumber\\
        &\ = \  |\{M\in[G_{n+1-i},G_{n+1-i}+G_{n-i}):k_1(M)=k-(i-1)\}| \nonumber\\
        &\ = \  |\{M\in[0,G_{n-i}):k_1(M)=k-(i-1)\}| \nonumber\\
        &\ = \  q_{1,n-i,k-(i-1)}.
    \end{align}
    Similarly, the decomposition of any $M\in[G_{n-i},2G_{n-i})$ begins with $G_{n-i}$,
    and $G_{n-i}$ is not a part of a gap of size 0, so we have
    \begin{align}
      \label{eq:j-ge-1-interval-step-2}
      |\{M\in\,&I_{n,i,j}:k_0(M)=k\}| \nonumber\\
        &\ = \ |\{M\in[H_{n,i} + jG_{n-i}, H_{n,i}+(j+1)G_{n-i}):k_0(M)=k\}| \nonumber\\
        &\ = \  |\{M\in[G_{n-i},2G_{n-i}):k_0(M)=k-(d_i - i + (j-1))\}| \nonumber\\
        &\ = \  |\{M\in[0,G_{n-i}):k_0(M)=k-(d_i - i + (j-1))\}| \nonumber\\
        &\ = \  q_{0,n-i,k-(d_i-i+(j-1))}.
    \end{align}
    For $g\ge2$ we have
    \begin{align}
      |\{M\in\,&I_{n,i,0}:k_g(M)=k\}| \nonumber\\
        &\ = \  |\{M\in[H_{n,i}, H_{n,i}+G_{n-i}):k_g(M)=k\}| \nonumber\\
        &\ = \  |\{M\in[G_{n+1-i},G_{n+1-i}+G_{n-i}):k_g(M)=k\}| \nonumber\\
        &\ = \  |\{M\in[G_{n+1-i},G_{n+1-i}+G_{n+1-i-g}):k_g(M)=k\}| \nonumber\\
        &\quad + |\{M\in[G_{n+1-i}+G_{n+1-i-g},G_{n+1-i}+G_{n+2-i-g}):k_g(M)=k\}|
          \nonumber\\
        &\quad + |\{M\in[G_{n+1-i}+G_{n+2-i-g},G_{n+1-i}+G_{n-i}):k_g(M)=k\}|
          \nonumber\\
        &\ = \  |\{M\in[0,G_{n+1-i-g}):k_g(M)=k\}| \nonumber\\
        &\quad + |\{M\in[G_{n+1-i-g},G_{n+2-i-g}):k_g(M)=k-1\}| \nonumber\\
        &\quad + |\{M\in[G_{n+2-i-g},G_{n-i}):k_g(M)=k\}| \nonumber\\
        &\ = \  |\{M\in[0,G_{n-i}):k_g(M)=k\}| \nonumber\\
        &\quad + |\{M\in[G_{n+1-i-g},G_{n+2-i-g}):k_g(M)=k-1\}| \nonumber\\
        &\quad - |\{M\in[G_{n+1-i-g},G_{n+2-i-g}):k_g(M)=k\}| \nonumber\\
        &\ = \  q_{g,n-i,k} + p_{g,n+1-i-g,k-1} - p_{g,n+1-i-g,k},
    \end{align}
    where the third equality comes from noting that for $M\in[G_{n+1-i}, G_{n+1-i}+G_{n-i})$,
    $G_{n+1-i}$ is part of a gap of size $g$ in the decomposition of $M$ if and only if
    $M\in[G_{n+1-i}+G_{n+1-i-g},G_{n+1-i}+G_{n+2-i-g})$.
    Using the same argument, we obtain, for $g\ge 1$ and $j\ge 1$,
    \begin{align}
      |\{M\in\,&[G_{n-i},2G_{n-i}):k_g(M)=k\}| \nonumber\\
        &\ = \  |\{M\in[G_{n-i},G_{n-i}+G_{n-i-g}):k_g(M)=k\}| \nonumber\\
        &\quad + |\{M\in[G_{n-i}+G_{n-i-g},G_{n-i}+G_{n+1-i-g}):k_g(M)=k\}|
          \nonumber\\
        &\quad + |\{M\in[G_{n-i}+G_{n+1-i-g},2G_{n-i}):k_g(M)=k\}|
          \nonumber\\
        &\ = \  |\{M\in[0,G_{n-i-g}):k_g(M)=k\}| \nonumber\\
        &\quad + |\{M\in[G_{n-i-g},G_{n+1-i-g}):k_g(M)=k-1\}| \nonumber\\
        &\quad + |\{M\in[G_{n+1-i-g},G_{n-i}):k_g(M)=k\}| \nonumber\\
        &\ = \  |\{M\in[0,G_{n-i}):k_g(M)=k\}| \nonumber\\
        &\quad + |\{M\in[G_{n-i-g},G_{n+1-i-g}):k_g(M)=k-1\}| \nonumber\\
        &\quad - |\{M\in[G_{n-i-g},G_{n+1-i-g}):k_g(M)=k\}| \nonumber\\
        &\ = \  q_{g,n-i,k} + p_{g,n-i-g,k-1} - p_{g,n-i-g,k}.
    \end{align}
    Combining with \eqref{eq:j-ge-1-interval-step-1},
    we have, for $g\ge 2$,
    \begin{align}
      |\{M\in\,&I_{n,i,j}:k_1(M)=k\}| \nonumber\\
        &\ = \  |\{M\in[H_{n,i} + jG_{n-i}, H_{n,i}+(j+1)G_{n-i}):k_1(M)=k\}| \nonumber\\
        &\ = \  |\{M\in[G_{n-i},2G_{n-i}):k_1(M)=k-i\}| \nonumber\\
        &\ = \  q_{1,n-i,k-i} + p_{1,n-i-1,k-i-1} - p_{1,n-i-1,k-i} \nonumber\\
      |\{M\in\,&I_{n,i,j}:k_g(M)=k\}|\nonumber\\
        &\ = \  |\{M\in[H_{n,i} + jG_{n-i}, H_{n,i}+(j+1)G_{n-i}):k_g(M)=k\}|\nonumber\\
        &\ = \  |\{M\in[G_{n-i},2G_{n-i}):k_g(M)=k\}| \nonumber\\
        &\ = \  q_{g,n-i,k} + p_{g,n-i-g,k-1} - p_{g,n-i-g,k}.
    \end{align}
    This establishes all six equalities that we desire.


  \subsection{Computing $p_{g,n,k}$}
    \label{app:zeck-num-gap-count-2}
    This section uses careful bookkeeping to produce homogenous two dimensional recursive formulas for $p_{g,n,k}$ using \eqref{eq:zeck-num-gap-count-intervals}.

    Recall that for $g\ge0$ we have
    \begin{align}
      p_{g,n,k}
        &\ = \  \sum_{(i,j)\in Z}|\{M\in I_{n,i,j}:k_g(M)=k\}| \nonumber\\
        &\ = \  \sum_{i=0}^{L-1}\sum_{j=1}^{c_{i+1}-1}
            |\{M\in[H_{n,i} + jG_{n-i}, H_{n,i}+(j+1)G_{n-i}):k_g(M)=k\}|
            \nonumber\\
        &\quad + \sum_{i=1}^{L-1}
            |\{M\in[H_{n,i}, H_{n,i}+G_{n-i}):k_g(M)=k\}|.
    \end{align}
    Substituting from \eqref{eq:zeck-num-gap-count-intervals}, we have (for $g\ge 2$)
    \begin{align}
      p_{0,n,k}
        &\ = \  \sum_{i=0}^{L-1}\sum_{j=1}^{c_{i+1}-1}q_{0,n-i,k-(d_i-i+(j-1))}
         + \sum_{i=1}^{L-1}q_{0,n-i,k-(d_i-i)}
         \nonumber\\
      p_{1,n,k}
        &\ = \  \sum_{i=0}^{L-1}\sum_{j=1}^{c_{i+1}-1}
              \pa{q_{1,n-i,k-i} + p_{1,n-i-1,k-i-1} - p_{1,n-i-1,k-i}}
         + \sum_{i=1}^{L-1} q_{1,n-i,k-(i-1)}
         \nonumber\\
        &\ = \  \sum_{i=0}^{L-1}
            (c_{i+1}-1)
            \pa{q_{1,n-i,k-i} + p_{1,n-i-1,k-i-1} - p_{1,n-i-1,k-i}}
         + \sum_{i=1}^{L-1} q_{1,n-i,k-(i-1)}
         \nonumber\\
      p_{g,n,k}
        &\ = \  \sum_{i=0}^{L-1}\sum_{j=1}^{c_{i+1}-1}
            \pa{q_{g,n-i,k} + p_{g,n-i-g,k-1} - p_{g,n-i-g,k}} \nonumber\\
         &\quad+ \sum_{i=1}^{L-1}
            \pa{q_{g,n-i,k} + p_{g,n+1-i-g,k-1} - p_{g,n+1-i-g,k}} \nonumber\\
        &\ = \  \sum_{i=0}^{L-1}(c_{i+1}-1)
            \pa{q_{g,n-i,k} + p_{g,n-i-g,k-1} - p_{g,n-i-g,k}} \nonumber\\
         &\quad+ \sum_{i=1}^{L-1}
            \pa{q_{g,n-i,k} + p_{g,n+1-i-g,k-1} - p_{g,n+1-i-g,k}}.
    \end{align}

    Substituting for $p_{g,n,k}$ and $p_{g,n-1,k}$
    and using $q_{g,n,k} - q_{g,n-1,k} = p_{g,n-1,k}$, we obtain for $g=0$
    \begin{align}
      p_{0,n,k} - p_{0,n-1,k}
        &\ = \  \sum_{i=0}^{L-1}\sum_{j=1}^{c_{i+1}-1}p_{0,n-i-1,k-(d_i-i+(j-1))}
         + \sum_{i=1}^{L-1}p_{0,n-i-1,k-(d_i-i)}
         \nonumber\\
      p_{0,n,k}
        &\ = \  \sum_{i=0}^{L-1}\sum_{j=1}^{c_{i+1}-1}p_{0,n-i-1,k-(d_i-i+(j-1))}
         + \sum_{i=0}^{L-1}p_{0,n-i-1,k-(d_i-i)}.
         \nonumber\\
        &\ = \  \sum_{i=1}^{L}\sum_{j=1}^{c_{i}-1}p_{0,n-i,k-(d_{i-1}-(i-1)+(j-1))}
         + \sum_{i=1}^{L}p_{0,n-i,k-(d_{i-1}-(i-1))}.
    \end{align}
    Similarly for $g=1$ we obtain
    \begin{align}
      p_{1,n,k} - p_{1,n-1,k}
        &\ = \  \sum_{i=0}^{L-1}(c_{i+1}-1) \Big[p_{1,n-i-1,k-i}
              + \pa{p_{1,n-i-1,k-i-1} - p_{1,n-i-1,k-i}} \nonumber\\
        &\quad- \pa{p_{1,n-i-2,k-i-1} - p_{1,n-i-2,k-i}}\Big]
              + \sum_{i=1}^{L-1} p_{1,n-i-1,k-(i-1)}. \nonumber\\
    \end{align}
    Thus
    \begin{align}
      p_{1,n,k}
        &\ = \  p_{1,n-1,k} + \sum_{i=0}^{L-1}(c_{i+1}-1)p_{1,n-i-1,k-i} + \sum_{i=1}^{L-1}p_{1,n-i-1,k-(i-1)} \nonumber\\
        &\quad + \sum_{i=0}^{L-1}(c_{i+1}-1)\pa{
            \pa{p_{1,n-i-1,k-i-1} - p_{1,n-i-1,k-i}}
            - \pa{p_{1,n-i-2,k-i-1} - p_{1,n-i-2,k-i}}
           } \nonumber\\
        &\ = \  p_{1,n-1,k} + \sum_{i=1}^{L}(c_{i}-1)p_{1,n-i,k-(i-1)} + \sum_{i=2}^{L}p_{1,n-i,k-(i-2)} \nonumber\\
        &\quad + \sum_{i=1}^{L}(c_{i}-1)\pa{
            \pa{p_{1,n-i,k-i} - p_{1,n-i,k-(i-1)}}
            - \pa{p_{1,n-i-1,k-i} - p_{1,n-i-1,k-(i-1)}}
           },
    \end{align}
    and for $g\ge2$ we have
    \begin{align}
      p_{g,n,k} - p_{g,n-1,k}
        &\ = \  \sum_{i=0}^{L-1}(c_{i+1}-1)
            \Big[p_{g,n-i-1,k} + \pa{p_{g,n-i-g,k-1} - p_{g,n-i-g,k}}\nonumber\\
        &\quad- \pa{p_{g,n-i-g-1,k-1} - p_{g,n-i-g-1,k}} \Big]
          + \sum_{i=1}^{L-1} \Big[p_{g,n-i-1,k} \nonumber\\
        &\quad+ \pa{p_{g,n+1-i-g,k-1} - p_{g,n+1-i-g,k}}
          - \pa{p_{g,n-i-g,k-1} - p_{g,n-i-g,k}}\Big].
         \nonumber\\
    \end{align}
    Thus
    \begin{align}
      p_{g,n,k}
        &\ = \  \sum_{i=0}^{L-1}c_{i+1}p_{g,n-i-1,k} \nonumber\\
        &\quad + \sum_{i=0}^{L-1}(c_{i+1}-1)
            \pa{
              \pa{p_{g,n-i-g,k-1} - p_{g,n-i-g,k}}
              - \pa{p_{g,n-i-g-1,k-1} - p_{g,n-i-g-1,k}}
            }
         \nonumber\\
        &\quad + \sum_{i=1}^{L-1}
            \pa{
              \pa{p_{g,n+1-i-g,k-1} - p_{g,n+1-i-g,k}}
              - \pa{p_{g,n-i-g,k-1} - p_{g,n-i-g,k}}
            }
         \nonumber\\
        &\ = \  \sum_{i=1}^{L}c_{i}p_{g,n-i,k}
          + \sum_{i=1}^{L}c_{i}^*
            \pa{
              \pa{p_{g,n+1-i-g,k-1} - p_{g,n+1-i-g,k}}
              - \pa{p_{g,n-i-g,k-1} - p_{g,n-i-g,k}}
            }
    \end{align}
    as desired.


\section{Proving $C_\mu>0$ and $C_\sigma>0$ in Section \ref{sec:gap-thms}}
  \label{app:C_mu/sigma-positive}
  In this section we prove $C_\mu>0$ and $C_\sigma>0$ in \eqref{eq:C_mu/sigma-2}. We first verify that the denominators of $C_\mu$ and $C_\sigma$ are positive. Because the numerators of $C_\mu$ and $C_\sigma$ are linear in the $t_{i,j}$, we obtain expressions for their numerators directly from the gap recurrences in \eqref{eq:p_gnk}. As the recurrences for the cases $g=0$, $g=1$, and $g\ge 2$ are different, we check that the numerators of $C_\mu$ and $C_\sigma$ are positive for each case separately. Each case is dealt with using standard methods.

  Recall
  \begin{align}
    \label{eq:C_mu/sigma-3}
    C_\mu \ &\defeq\  \frac{\sum_{i=1}^{i_0}\sum_{j=0}^{j_0} \frac{t_{i,j}\cdot j}{\lambda_1^i}}{\sum_{i=1}^{i_0}\sum_{j=0}^{j_0} \frac{t_{i,j}\cdot i}{\lambda_1^i}} \nonumber\\
    C_\sigma \ &\defeq\  \frac
          {\sum_{i=1}^{i_0}\sum_{j=0}^{j_0}
          \frac{t_{i,j}}{\lambda_1^i}
          \cdot(j-C_\mu i)^2 }
          {\sum_{i=1}^{i_0}\sum_{j=0}^{j_0}\frac{t_{i,j}\cdot i}{\lambda_1^i}}.
  \end{align}
  Let
  \begin{align}
    C_\mu^* \ &\defeq \ \sum_{i=1}^{i_0}\sum_{j=0}^{j_0} \frac{t_{i,j}\cdot j}{\lambda_1^i}, \nonumber\\
    C_\sigma^* \ &\defeq \ \sum_{i=1}^{i_0}\sum_{j=0}^{j_0} \frac{t_{i,j}}{\lambda_1^i} \cdot(j-C_\mu i)^2
  \end{align}
  be the numerators of $C_\mu, C_\sigma$ in \eqref{eq:C_mu/sigma-3}, respectively.
  Note that the denomatators of $C_\mu$ and $C_\sigma$ are both always positive as
  \begin{equation}
    \sum_{i=1}^{i_0}\sum_{j=0}^{j_0} \frac{t_{i,j}\cdot i}{\lambda_1^i}
      \ = \ \sum_{i=1}^{i_0} \frac{\hat t_i \cdot i}{\lambda_1^i}
      \ > \ 0
  \end{equation}
  since $\hat t_i = c_i > 0$ for $1\le i \le L$ and $\hat t_i = 0$ for $L < i < i_0$.
  Thus it suffices to prove $C_\mu^* > 0$ and $C_\sigma^* > 0$ when \m{K_{g,n}} is nontrivial.

  We first prove $C_\mu^*>0$. Since $C_\mu^*$ is linear in $t_{i,j}$, \eqref{eq:C_mu/sigma-3} tells us we can obtain $C_\mu^*$ by replacing every instance of $p_{g,n-x,k-y}$ in the recurrence relations of \eqref{eq:p_gnk} with $y/\lambda_1^x$.

  Suppose $g=0$.  If $c_i=1$ for all $1\le i< L$ and $c_L$ is 1 or 2, then no $M$ has gaps of size 0 in the decomposition, so the random variable $K_{g,n}$ is trivial. Otherwise, $c_i\ge 2$ for some $i<L$. In this case, $d_{L-1}-(L-1) > 0$. Thus, evaluating $C_\mu^*$ gives
  \begin{align}
    \label{eq:p_0nk-copy-0}
    p_{0,n,k}
      &\ = \  \sum_{i=1}^{L}\sum_{j=1}^{c_{i}-1}p_{0,n-i,k-(d_{i-1}-(i-1)+(j-1))}
       + \sum_{i=1}^{L}p_{0,n-i,k-(d_{i-1}-(i-1))},
  \end{align}
  so
  \begin{align}
    C_\mu^*
      &\ = \  \sum_{i=1}^{L}\sum_{j=1}^{c_{i}-1}\frac{k-(d_{i-1}-(i-1)+(j-1))}{\lambda_1^i}
       + \sum_{i=1}^{L}\frac{k-(d_{i-1}-(i-1))}{\lambda_1^i} \nonumber\\
      &\ > \  \sum_{i=1}^{L}\sum_{j=1}^{c_{i}-1}\frac{0}{\lambda_1^i}
       + \sum_{i=1}^{L}\frac{0}{\lambda_1^i}\ =\ 0.
  \end{align}

  Now suppose $g=1$. If $c_1=c_2=1$ and $L=2$ (i.e., $\{G_n\}$ is the Fibonaccis), then $K_{1,n}=0$ is trivial, so we can assume otherwise. Recall
  \begin{align}
    \label{eq:p_1nk-copy-0}
    p_{1,n,k}
      &\ = \  p_{1,n-1,k} + \sum_{i=1}^{L}(c_{i}-1)p_{1,n-i,k-(i-1)} + \sum_{i=2}^{L}p_{1,n-i,k-(i-2)} \nonumber\\
      &\quad + \sum_{i=1}^{L}(c_{i}-1)\pa{
          \pa{p_{1,n-i,k-i} - p_{1,n-i,k-(i-1)}}
          - \pa{p_{1,n-i-1,k-i} - p_{1,n-i-1,k-(i-1)}}
         }.
  \end{align}
  Note, when we perform the substitution to obtain $C_\mu^*$, any expression of the form
  \be
    (p_{1,n-(x-1),k-y} - p_{1,n-(x-1),k-(y-1)}) - (p_{1,n-x,k-y} + p_{1,n-x,k-(y-1)})
  \ee
  becomes
  \be
    \frac{y - (y-1)}{\lambda_1^{x-1}} - \frac{y - (y-1)}{\lambda_1^{x}}\ =\ \frac{\lambda_1-1}{\lambda_1^x} \ >\ 0.
  \ee
  Thus \eqref{eq:p_1nk-copy-0} gives that when $g=1$,
  \begin{align}
  C_\mu^*
      &\ = \  \frac{0}{\lambda_1^1} +  \sum_{i=1}^{L}(c_{i}-1)\frac{(i-1)}{\lambda_1^i}
       + \sum_{i=2}^{L}\frac{i-2}{\lambda_1^i}
       + \sum_{i=1}^{L}(c_{i}-1)\frac{\lambda_1-1}{\lambda_1^{i+1}}\ >\ 0.
  \end{align}
  To see that the sum is in fact positive, note first that every summand is nonnegative.
  Furthermore, if any $c_i$ is greater than 1, the last sum is strictly positive.
  Otherwise, all the $c_i$'s are 1, in which case $L\ge 3$ since the sequence is not
  the Fibonaccis. Then the second to last sum is strictly positive.

  Lastly, assume $g\ge 2$. Recall
  \begin{align}
    p_{g,n,k}
      &\ = \  \sum_{i=1}^{L}c_{i}p_{g,n-i,k}
        + \sum_{i=1}^{L}c_{i}^*
          \pa{
            \pa{p_{g,n+1-i-g,k-1} - p_{g,n+1-i-g,k}}
            - \pa{p_{g,n-i-g,k-1} - p_{g,n-i-g,k}}
          }.
  \end{align}
  Performing the same substitution gives
  \begin{align}
    C_\mu^*
      &\ = \  \sum_{i=1}^{L}c_{i}\cdot0
        + \sum_{i=1}^{L}c_{i}^*\cdot\frac{\lambda_1-1}{\lambda_1^{i+g}}\ >\ 0
  \end{align}
  as $c_i^*>0$ for some $i$ by definition. This proves that for any $g$, we have $C_\mu^*>0$, so for any \m{g} we also have \m{C_\mu > 0}.

  We can similarly casework on \m{g} to prove $C_\sigma^*>0$ when $K_{g,n}$ is nontrivial.
  As before $C_\sigma^*$ linear in the $t_{i,j}$, so by \eqref{eq:C_mu/sigma-3} we can obtain $C_\sigma^*$ by replacing every instance of $p_{g,n-x,k-y}$ in \eqref{eq:p_gnk} with $(y-C_\mu x)^2/\lambda_1^x$. This produces an expression for $C_\sigma^*$ that we prove is positive using standard techniques.

  First suppose $g=0$. Recall
  \begin{align}
    \label{eq:p_0nk-copy}
    p_{0,n,k}
      &\ = \  \sum_{i=1}^{L}\sum_{j=1}^{c_{i}-1}p_{0,n-i,k-(d_{i-1}-(i-1)+(j-1))}
       + \sum_{i=1}^{L}p_{0,n-i,k-(d_{i-1}-(i-1))}.
  \end{align}
  By considering $i=1$ and $j=1$ in the double sum of \eqref{eq:p_0nk-copy}, we have $t_{1,0}=1$.
  Since $t_{i,j}\ge 0$ for all $i,j$, we have $C_\sigma^*=0$ if and only if $j-C_\mu i = 0$ for all $i,j$ satisfying $t_{i,j}\neq 0$.
  But this is impossible as $t_{1,0}=1$ implies $C_\mu=0$, and we already showed $C_\mu > 0$ when $K_{g,n}$ is nontrivial.
  Thus $C_\sigma^* > 0$ when $g=0$ and $K_{0,n}$ is nontrivial.

  Now suppose $g=1$. Recall
  \begin{align}
    \label{eq:p_1nk-copy}
    p_{1,n,k}
      &\ = \  p_{1,n-1,k} + \sum_{i=1}^{L}(c_{i}-1)p_{1,n-i,k-(i-1)} + \sum_{i=2}^{L}p_{1,n-i,k-(i-2)} \nonumber\\
      &\quad + \sum_{i=1}^{L}(c_{i}-1)\pa{
          \pa{p_{1,n-i,k-i} - p_{1,n-i,k-(i-1)}}
          - \pa{p_{1,n-i-1,k-i} - p_{1,n-i-1,k-(i-1)}}
         }.
  \end{align}
  Again, if $c_1=c_2=1$ and $L=2$ (i.e., $\{G_n\}$ is the Fibonaccis), then $K_{1,n}=0$ is trivial, so we can assume otherwise.
  Substituting for $C_\sigma^*$ gives
  \begin{align}
    C_\sigma^*
      &\ = \  \frac{(0-C_\mu)^2}{\lambda_1^i} + \sum_{i=1}^{L}(c_{i}-1)\frac{(i-1-C_\mu i)^2}{\lambda_1^i} + \sum_{i=2}^{L}\frac{(i-2-C_\mu i)^2}{\lambda_1^i} \nonumber\\
      &\quad + \sum_{i=1}^{L}(c_{i}-1)\pa{
          \frac{(i-C_\mu i)^2}{\lambda_1^i}
        - \frac{(i-1-C_\mu i)^2}{\lambda_1^i}
        - \frac{(i-C_\mu (i+1))^2}{\lambda_1^{i+1}}
        + \frac{(i-1-C_\mu (i+1))^2}{\lambda_1^{i+1}}
        } \nonumber \\
      &\ = \  \frac{C_\mu^2}{\lambda_1^i} + \sum_{i=2}^{L}\frac{(i-2-C_\mu i)^2}{\lambda_1^i}
        + \sum_{i=1}^{L}(c_{i}-1)\pa{
          \frac{(i-C_\mu i)^2}{\lambda_1^i}
        - \frac{2i-1-2C_\mu (i+1)}{\lambda_1^{i+1}}
        } \nonumber\\
  \end{align}
  By an earlier argument $C_\mu > 0$, so we simplify to get
  \begin{align}
    C_\sigma^*
      &\ > \ \sum_{i=1}^{L}(c_{i}-1)\pa{ \frac{(i-C_\mu i)^2}{\lambda_1^i} - \frac{2i-1-2C_\mu (i+1)}{\lambda_1^{i+1}} }.
  \end{align}
  Now we show that for all $i\ge 1$ we have
  \begin{align}
    (i-C_\mu i)^2 - \frac{2i-1-2C_\mu (i+1)}{\lambda_1}\ \ge\ 0.
  \end{align}
  If $2i-1-2C_\mu(i+1)\ge 0$, then since $\lambda_1 > 1$, we obtain
  \begin{align}
    (i-C_\mu i)^2 &- \frac{2i-1-2C_\mu (i+1)}{\lambda_1} \nonumber\\
    &\ \ge \  (i-C_\mu i)^2 - (2i-1-2C_\mu (i+1)) \nonumber\\
    &\ = \  (i-1-C_\mu i)^2 + 2C_\mu \ > \  0.
  \end{align}
  Otherwise, we have $2i-1-2C_\mu(i+1)<0$ so
  \begin{align}
    (i-C_\mu i)^2 - \frac{2i-1-2C_\mu (i+1)}{\lambda_1} \ > \  (i-C_\mu i)^2 \ \ge \ 0.
  \end{align}
  These two cases allow us to conclude
  \begin{align}
    C_\sigma^*
      &\ > \ \sum_{i=1}^{L}\frac{c_i-1}{\lambda_1^i}\pa{(i-C_\mu i)^2 - \frac{2i-1-2C_\mu (i+1)}{\lambda_1} } \nonumber\\
      &\ \ge \  \sum_{i=1}^{L}\frac{c_i-1}{\lambda_1^i}\cdot 0 \ = \  0
  \end{align}
  as desired.

  Finally suppose $g=2$. Recall
  \begin{align}
    \label{eq:p_2nk-copy}
    p_{g,n,k}
      &\ = \  \sum_{i=1}^{L}c_{i}p_{g,n-i,k}
        + \sum_{i=1}^{L}c_{i}^*
          \pa{
            \pa{p_{g,n+1-i-g,k-1} - p_{g,n+1-i-g,k}}
            - \pa{p_{g,n-i-g,k-1} - p_{g,n-i-g,k}}
          }.
  \end{align}
  Substituting for $C_\sigma^*$ as before gives
  \begin{align}
    \label{eq:p_gnk-C_sigma-step-1}
    C_\sigma^*
      &\ = \  \sum_{i=1}^{L}c_i\cdot\frac{(0-C_\mu i)^2}{\lambda_1^i}
      \nonumber\\
      &+ \sum_{i=1}^{L}c_{i}^*
          \pa{
            \frac{(1-C_\mu (i+g-1))^2 - (0-C_\mu(i+g-1)^2)}{\lambda_1^{i+g-1}}
             - \frac{(1-C_\mu (i+g))^2 - (0-C_\mu(i+g)^2)}{\lambda_1^{i+g}}
          }
      \nonumber\\
      &\ = \  \sum_{i=1}^{L}c_i\cdot\frac{(C_\mu i)^2}{\lambda_1^i}
      + \sum_{i=1}^{L}c_{i}^*
          \pa{
            \frac{1-2C_\mu (i+g-1)}{\lambda_1^{i+g-1}}
             - \frac{1-2C_\mu (i+g)}{\lambda_1^{i+g}}
          }
          \nonumber\\
      &\ = \  \sum_{i=1}^{L}\frac{c_i}{\lambda_1^{i+g}}\cdot
          \pa{
            \lambda_1^g(C_\mu i)^2
            + \frac{c_{i}^*}{c_i}\pa{
                \lambda_1(1-2C_\mu (i+g-1))
              - (1-2C_\mu (i+g))
            }
          }.
  \end{align}
  Since $\lambda_1>1$, the coefficient of $i$ in
  $\lambda_1(1-2C_\mu (i+g-1)) - (1-2C_\mu (i+g))$
  is negative, so it is minimized when $i=L$.
  Thus if $\lambda_1(1-2C_\mu (L+g-1)) - (1-2C_\mu (L+g))\ge 0$,
  \eqref{eq:p_gnk-C_sigma-step-1} tells us $C_\sigma^* > 0$.
  Thus we may assume $\lambda_1(1-2C_\mu (L+g-1)) - (1-2C_\mu (L+g))< 0$,
  Since $c_i^*/c_i\le 1$ with equality if and only if $i\neq L$, we can simplify \eqref{eq:p_gnk-C_sigma-step-1} to get
  \begin{align}
    C_\sigma^*
      &\ > \ \sum_{i=1}^{L}\frac{c_i}{\lambda_1^{i+g}}\cdot
          \pa{
            \lambda_1^g(C_\mu i)^2
            + \pa{
                \lambda_1(1-2C_\mu (i+g-1))
              - (1-2C_\mu (i+g))
            }
          }.
  \end{align}
  Using standard techniques (such as plugging into Mathematica), one can show that
  $x^y(zw)^2 + (x(1-2z(w+y-1))-(1-2z(w+y))\ge 0$
  for all $x\ge 1, y\ge 2, z\ge 0, w\ge 1$,
  and substituting $x=\lambda_1, y=g, z=C_\mu, w=i$ gives
  \begin{align}
    C_\sigma^*
      &\ > \ \sum_{i=1}^{L}\frac{c_i}{\lambda_1^{i+g}}\cdot
          \pa{
            \lambda_1^g(C_\mu i)^2
            + \pa{
                \lambda_1(1-2C_\mu (i+g-1))
              - (1-2C_\mu (i+g))
            }
          }\nonumber\\
      &\ \ge \  \sum_{i=1}^{L}\frac{c_i}{\lambda_1^{i+g}}\cdot 0 \ = \ 0.
  \end{align}
  as desired.

  For every $g$ and every sequence for which $K_{g,n}$ is nontrivial, we've proven \m{C_\sigma^* > 0}, so we can conclude $C_\sigma > 0$ is all of these cases.



\newcommand{\etalchar}[1]{$^{#1}$}

\bigskip

\end{document}